\newcommand{\numberset}{\mathbb}
\newcommand{\N}{\numberset{N}}
\newcommand{\R}{\numberset{R}}
\newcommand{\Pk}{\numberset{P}}
\renewcommand{\epsilon}{\varepsilon}
\renewcommand{\theta}{\vartheta}
\renewcommand{\rho}{\varrho}
\renewcommand{\phi}{\varphi}
\def\P0{{\Pi^{0, E}_{k-2}}}
\def\PN{{\Pi^{\nabla, E}_k}}
\def\PP0{{\boldsymbol{\Pi}^{0, E}_{k-1}}}
\lbrace\begin{array}{@{}l@{}}}%
\theoremstyle{definition}
\theoremstyle{remark}
\newtheorem{remark}{Remark}[section]
\theoremstyle{remark}
\newtheorem{test}{Test}[section]
\theoremstyle{plain}
\newtheorem{theorem}{Theorem}[section]
\newtheorem{proposition}{Proposition}[section]
\newtheorem{lemma}{Lemma}[section]
\newcommand{\blu}{\color{blue}}
\newcommand{\red}{\color{red}}
\author[1]{L. Beir\~ao da Veiga \thanks{lourenco.beirao@unimib.it}}
\author[1]{A. Russo \thanks{alessandro.russo@unimib.it}}
\author[1]{G. Vacca \thanks{giuseppe.vacca@unimib.it}}
\affil[1]{Dipartimento di Matematica e Applicazioni,  Universit\`a degli Studi di Milano Bicocca, Via Roberto Cozzi 55 - 20125 Milano, Italy}  
\title{\textbf{The Virtual Element Method with curved edges}}
\date{\today}
\begin{document}

\maketitle
\begin{abstract}
In this paper we initiate the investigation of Virtual
Elements with curved faces. We consider the case of a fixed curved
boundary in two dimensions, as it happens in the approximation of
problems posed on a curved domain or with a curved interface. While an approximation of the
domain with polygons leads, for degree of accuracy $k \geq 2$, to a sub-optimal rate of
convergence, we show (both theoretically and numerically) that the
proposed curved VEM lead to an optimal rate of convergence. 
\end{abstract}

The copyright of the original paper is owned by EDP Sciences and SMAI. The original publication appears on the journal M2AN (Mathematical Modelling and Numerical Analysis) and can be found at 
\texttt{www.esaim-m2an.org}.

\section{Introduction}
\label{sec:1}

The Virtual Element Method (VEM) was introduced in \cite{volley, VEM-hitchhikers} as a generalization of the Finite Element Method that allows for general polygonal and polyhedral meshes. Polytopal meshes can be very useful for a wide range of reasons, including meshing of the domain (such as cracks) and data (such as inclusions) features, automatic use of hanging nodes, moving meshes, adaptivity. By avoiding the explicit construction of the local basis functions, Virtual Elements can easily handle general polygons/polyhedrons without the need of an overly complex construction. Since its first appearence, VEM has shared a very good success both in the mathematical and engineering literature: a very small sample of works is given by 
\cite{
GTP14,
Helmholtz-VEM,
Berrone-VEM,
hourglass,
vacca2016virtual, 
wriggers,
Stokes:divfree, 
dassi,
Steklov-VEM,
gardini,
mascotto,
chernov}.

The scope of the present paper is to develop a first study (on a simple model elliptic problem in 2D) of Virtual Elements with curved faces. Indeed, all the VEM papers in the literature make use of polygonal and polyhedral meshes, i.e. with straight edges and faces. On the other hand, as recognized in the finite element (FEM) literature, especially for high order methods the approximation of the domain by facets introduces an error that can dominate the analysis. This has led, in the FEM literature, to the development of non affine isoparametric elements, that is elements that are mapped with higher order polynomial maps that allow for a better approximation of the domain of interest \cite{scott:1975, Zlamal:1973, Ciarlet:1972, Lenoir:1986}. 
Another related technology is that of Isogeometric Analysis 
\cite{Hughes:2005, Hughes:2006, Hughes:2009}
that proposes an exact approximation of CAD domains by making use (in building the discrete spaces) of the same NURBS maps that parametrize the geometry.

In the context of Virtual Elements, one can exploit the peculiar construction of the method that (1) does not need an explicit expression of the basis functions and (2) is directly defined in physical space, i.e. no reference element is used. This allows to define discrete spaces also on elements that are curved in such a way to exactly represent the domain of interest. The needed ingredient is a (piecewise regular) parametrization of the boundary of the domain. 
The advantage with respect to isoparametric Finite Elements is that no approximation of the domain (even by polynomial functions) is introduced and, most importantly, no ``wise positioning'' of the isoparametric nodes is needed 
(see \cite{Lenoir:1986}). 
The advantage with respect to Isogeometric Analysis is that only the boundary parametrization is needed (and not the full volume parametrization), which is much more readily available in many CAD applications. Clearly this comes at a price, that is the absence of a reference element (or parametric domain) that can make the construction more costly from the computational standpoint (indeed we are not claiming that our method is superior, but only that it has appealing characteristics when compared to isoparametric FEM and IGA). 

We must also point out that there are other effective approaches for the accurate treatment of curved domains in the finite element framework (an interesting survey can be found in \cite{Sevilla:2011}).
Finally, to the best of our knowledge, the only numerical methods, in addition to the one here presented, that can handle curved polytopal meshes are \cite{Brezzi:2006, dipietro}.
The paper is organized as follows. In Section \ref{sec:2} we present the construction of the method on a model elliptic problem. In Section \ref{sec:3} we present the theoretical convergence analysis of the method, including interpolation estimates for the curved VEM spaces and stability bounds for the associated discrete bilinear form. 
In Section \ref{sec:4} we investigate the numerical integration on curved polygons.
In Section \ref{sec:tests} we present a set of numerical tests, also comparing the result with the original ``straight edge'' case.   
It is interesting to note that, although the initial construction and the theoretical proofs are clearly more involved, at the practical level the coding of the method with curved edges turns out to be essentially the same as in the ``straight edge'' case (the only difference being in the integration on edges and elements). Somehow, the present case fits very naturally into the Virtual Element setting.

\section{Virtual element space on curved domains}
\label{sec:2}

\subsection{Assumptions on the curved domains and notation}
\label{sub:2.1}

We start by reviewing the mathematical basis of our problem. Let $\Omega$ be a bounded open subset
of $\R^2$ whose boundary $\Gamma := \partial \Omega$  is made up of a finite number of smooth curves $\{\Gamma_i\}_{i=1, \dots, N}$  (see Figure \ref{fig:domain}), and we assume that
an elliptic boundary value problem is given over $\Omega$. 
\begin{figure}[!h]
\center{
\includegraphics[scale=0.50]{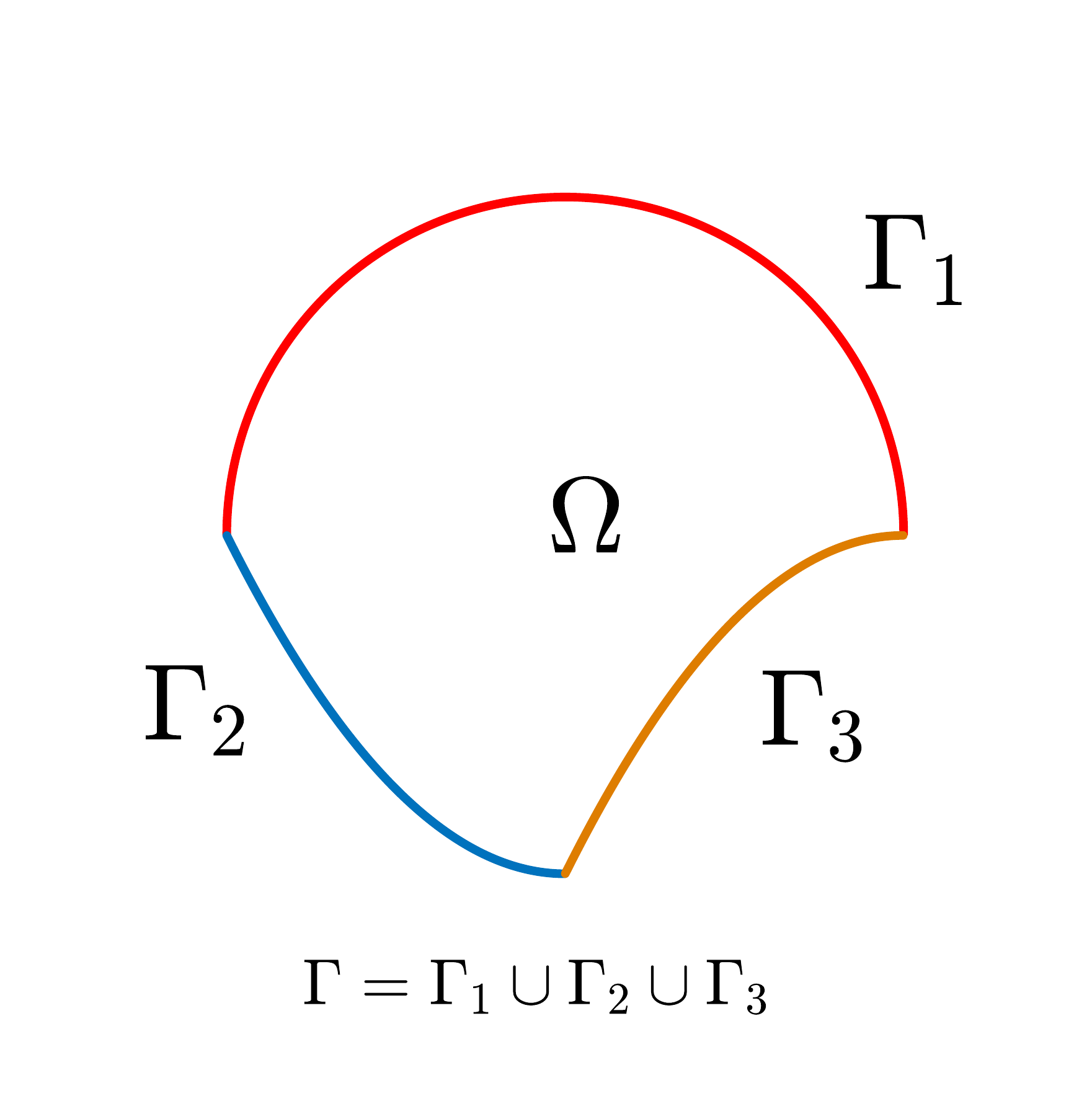} 
\vspace*{-0.5cm}
\caption{Example of curved domain.}
\label{fig:domain}
}
\end{figure}

We assume that: 
\begin{description}
\item [$\mathbf{(A0)}$] the boundary $\Gamma$ is Lipschitz and each curve $\Gamma_i$ of $\Gamma$ is sufficiently smooth, for instance we require that $\Gamma_i$ is of class $C^{m+1}$ (i.e. that $\Gamma$ belongs to $C^{m+1}$  piecewise) with $m \geq 0$.
\end{description}
Let $\gamma_i \colon I_i \to \Gamma_i$ be a given regular and invertible $C^{m+1}$-parametrization of the curve $\Gamma_i$ for $i=1, \dots, N$, where  $I_i := [a_i, b_i] \subset \R$ is a closed interval. Let $\ell_{\Gamma_i}$ be the length of the curve $\Gamma_i$, then we denote with $\chi_i \colon [0, \ell_{\Gamma_i}] \to \Gamma_i$  the arc-length, i.e. the unit-speed parametrization of $\Gamma_i$. Finally we denote with $\zeta_i \colon I_i \to [0, \ell_{\Gamma_i}]$ the natural parameter of $\gamma_i$, i.e.  the map 
\[
\zeta_i(t) := \int_{a_i}^t \|\gamma_i'(s)\| \, {\rm d} s \qquad \text{for all $t \in I_i$.} 
\]
It is clear that $\gamma_i(t) = \chi_i(\zeta_i(t))$ for all $t \in I_i$ (see also Figure \ref{fig:global}).
Moreover being $\|\gamma_i'\|_{L^{\infty}} > 0$,  both $\zeta_i$ and $\zeta_i^{-1}$ are in $W^{m+1, \infty}$.
\\
\begin{figure}[!h]
\center
{
\hspace*{-2cm}%
\begin{overpic}[scale=0.35]{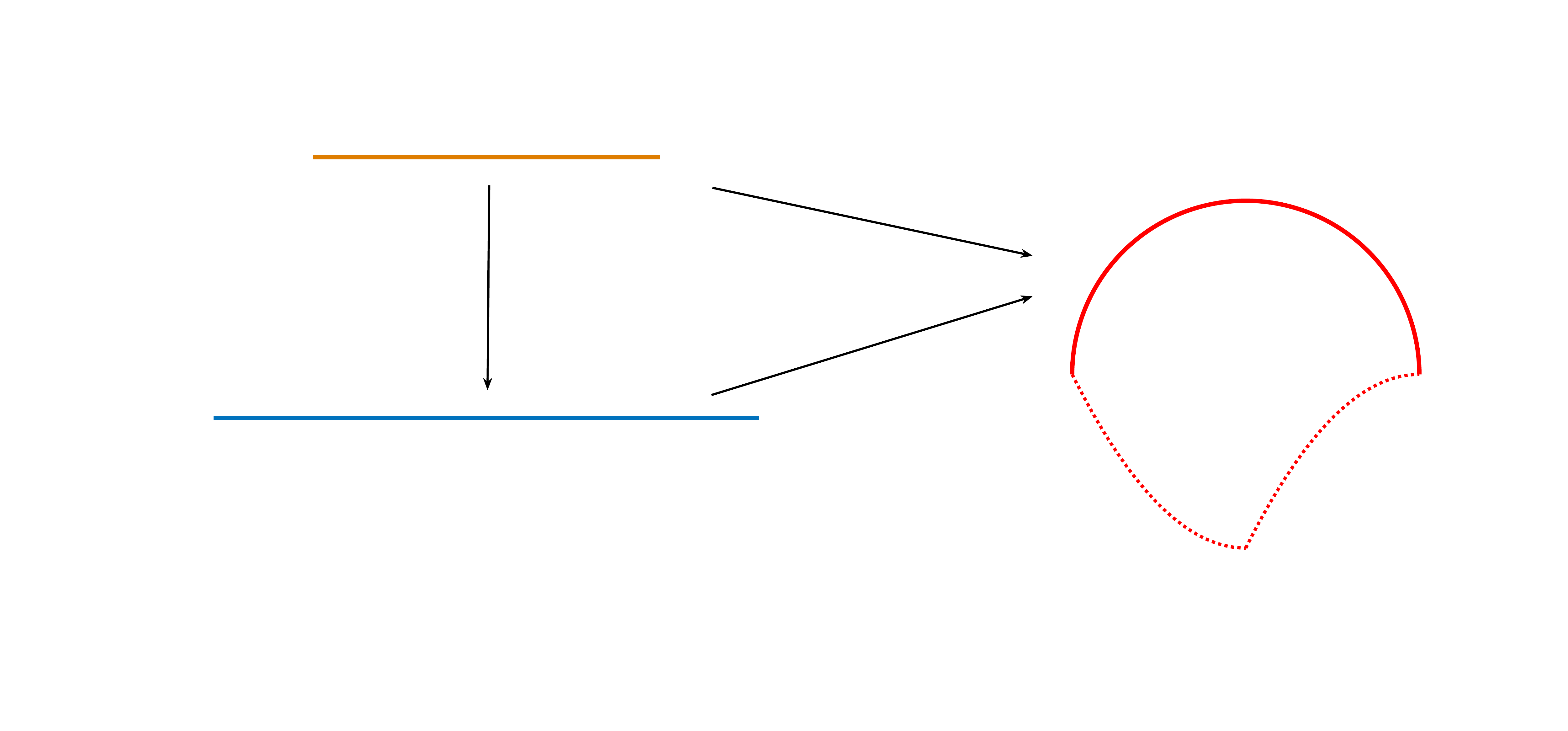} 
\put (55,22) {$\chi_1$}
\put (55,34) {$\gamma_1$}
\put (28,28) {$\zeta_1$}
\put (28.5,17) {$[0, \ell_{\Gamma_1}]$}
\put (31,38) {$I_1$}
\put (80,35) {$\Gamma_1$}
\put (80,25) {$\Omega$}
\end{overpic}
\vspace*{-2cm}
\caption{Parametrizations of the curve $\Gamma_1$ by means of the given parametrization $\gamma_1$ and the arc-length parametrization $\chi_1$, and natural parameter $\zeta_1$.}
\label{fig:global}
}
\end{figure}
Since all the parts $\Gamma_i$ of $\partial \Omega$ will be treated in the same way, in the following we will drop the index $i$ from all the involved maps and parameters (in order to obtain a lighter notation).

We consider as our model problem the elliptic equation:
\begin{equation}
\label{eq:continua}
\left\{
\begin{aligned}
& \text{find $u \in V := H_0^1(\Omega)$, such that} \\
&  a(u, \, v) = (f, \, v) \qquad & \text{for all $v \in V$,} \\
\end{aligned}
\right.
\end{equation}
where, using standard notation, $(\cdot, \, \cdot)$ denotes the $L^2$-inner product on $\Omega$ and $a(\cdot, \, \cdot) = (\nabla \cdot, \, \nabla \cdot)$.
It is well known that Problem \eqref{eq:continua} has a unique solution $u$.
%

Using the tool of the present paper and on the light of the existing VEM literature, it is possible to extend  this approach to other scalar elliptic equations (such as diffusion-convection-reaction with variable coefficients).

We will indicate the classical Sobolev semi-norms (and analogously for the norms) with the shorter symbols 
\[
|v|_{s}=|v|_{H^s(\Omega)} 
\quad \text{for all $v \in H^s(\Omega)$} 
\qquad \text{and} \qquad 
|v|_{s,\omega}=|v|_{H^s(\omega)}
\quad \text{for all $v \in H^s(\omega)$} 
\]
for any non-negative real number $s$ and generic open measurable set $\omega$.
We refer to \cite{lions-magenes} for the definition of Sobolev norms with real index.



\subsection{Virtual Element Spaces}
\label{sub:2.2}

Let $\set{\Omega_h}_h$ be a sequence of decompositions of $\Omega$ into general polygons $E$ completed along $\Gamma$  by elements whose boundary contains an arc $\subset \Gamma$, and let
\[
h_E := {\rm diameter}(E) , \quad
h := \sup_{E \in \Omega_h} h_E .
\]
We suppose that for all $h$, each element $E$ in $\Omega_h$ fulfils the following assumptions:
\begin{description}
\item [$\mathbf{(A1)}$] $E$ is star-shaped with respect to a ball $B_E$ of radius $ \geq\, \rho \, h_E$, 
\item [$\mathbf{(A2)}$] the length of any (possibly curved) edge of $E$ is $\geq \rho \, h_E$, 
\end{description}
where $\rho$ is a positive constant (see Figure \ref{fig:mesh} for an example of such decomposition). We remark that the hypotheses above, though not too restrictive in many practical cases, 
could be possibly further relaxed, as investigated in \cite{2016stability} for the case with straight edges. 
\begin{figure}[!h]
\center{
\hspace*{-4cm}%
\includegraphics[scale=0.45]{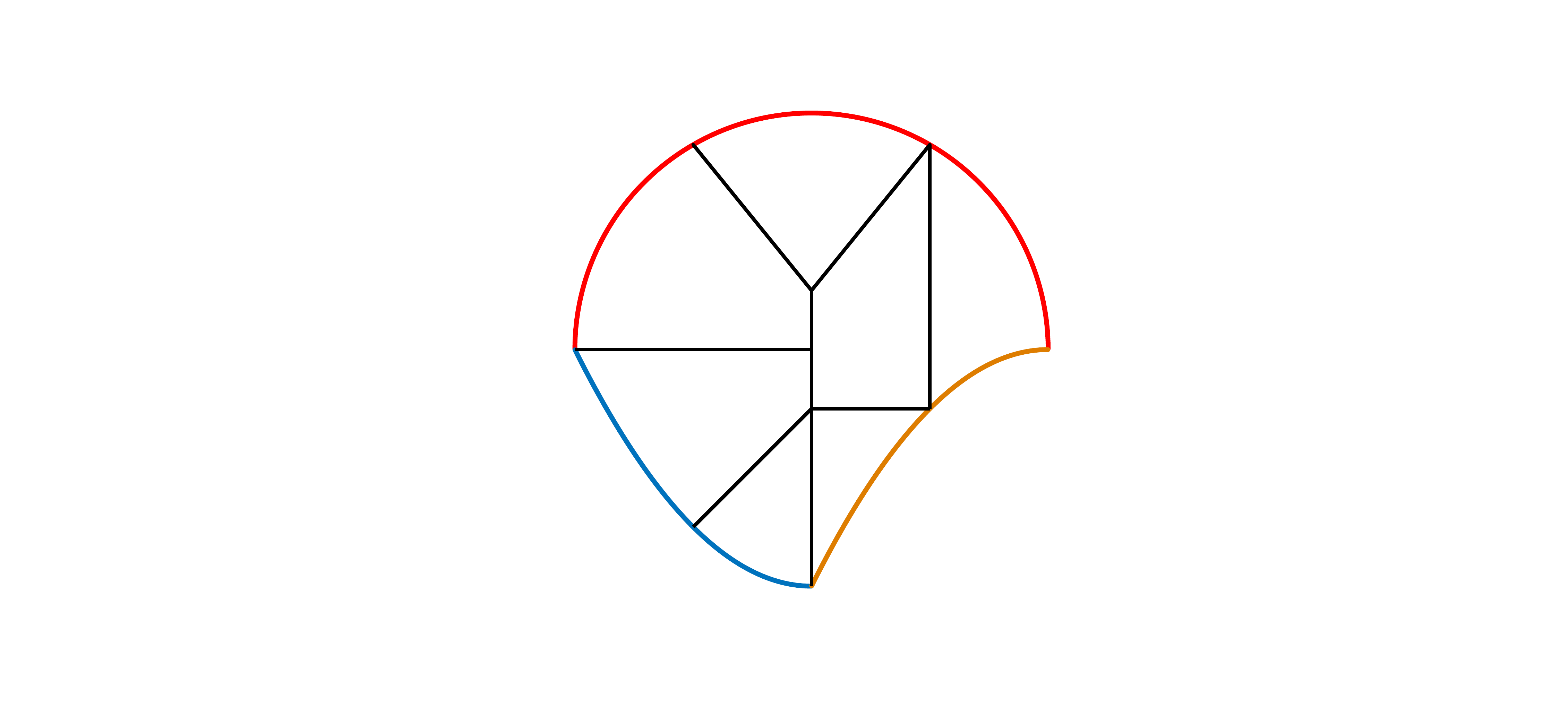} 
\vspace*{-1.7cm}
\caption{Mesh decomposition of the domain $\Omega$.}
\label{fig:mesh}
}
\end{figure}

In the following $C$ will denote a generic positive constant independent of the mesh diameter $h$ (possibly dependent on $\rho$) and that may change at each occurrence, 
and $\lesssim$ will denote a bound up to  $C$. 

For each element $E \in \Omega_h$ having an edge $e$ lying on the boundary $\Gamma$, with a slight abuse we use the same notations both for the global parametrizations of $\Gamma$ and the local parametrizations of $e$.
We denote with 
$\gamma \colon I_e \subset I \to e$  the restriction of $\gamma \colon I \to \Gamma$ having image $e$. 
Similarly, defined $\ell_e$ the length of $e$, we denote with $\chi \colon [0, \, \ell_e] \to e$ the arc-length parametrization of the curved edge $e$
 and with $\zeta \colon I_e \to [0, \, \ell_e]$ the associated natural parameter  (see Figure \ref{fig:globallocal} and Figure \ref{fig:local}).

\begin{figure}[!h]
\center
{
\hspace*{-2cm}%
\begin{overpic}[scale=0.35]{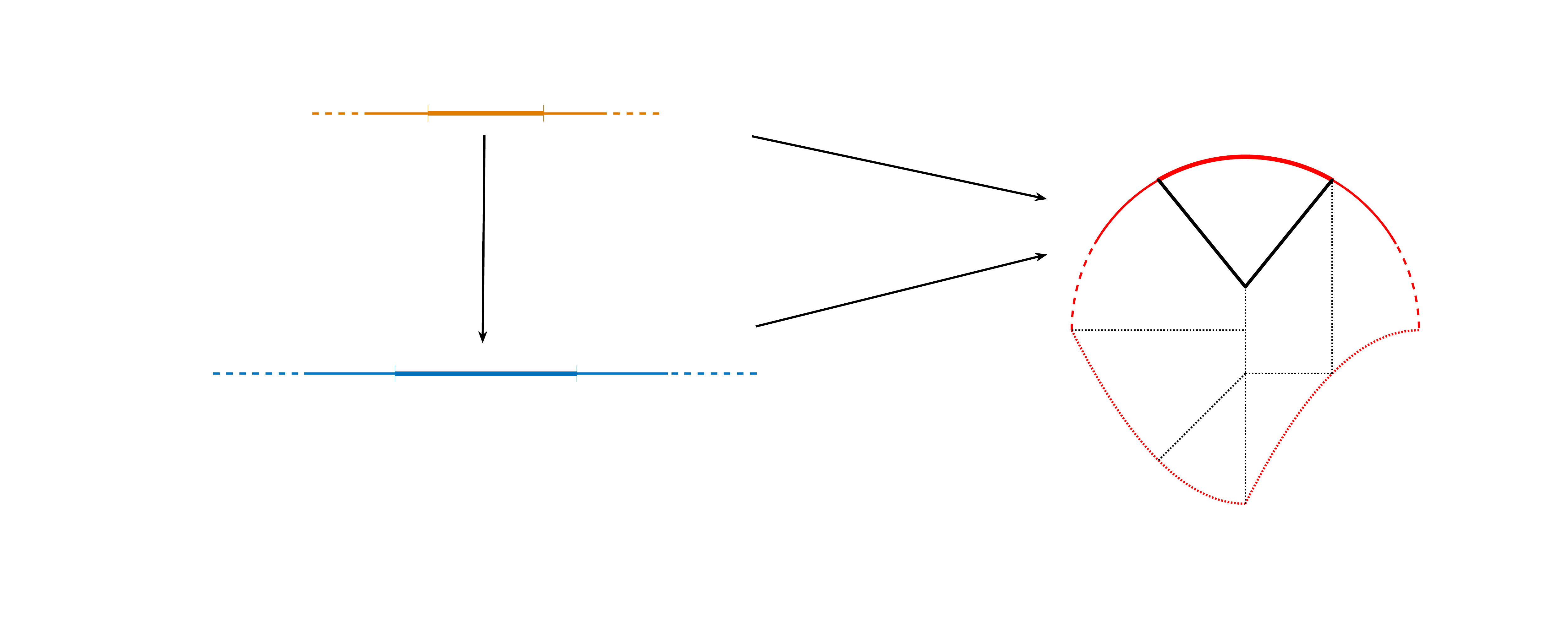} 
\put (56,20) {$\chi$}
\put (56,31) {$\gamma$}
\put (28,26) {$\zeta$}
\put (12,14) {$[0, \ell_{\Gamma}]$}
\put (27,14) {$[c, \, c + \ell_{e}]$}
\put (19,35) {$I$}
\put (31,35) {$I_e$}
\put (88,29) {$\Gamma$}
\put (80,21) {$\Omega$}
\put (80,26) {$E$}
\put (80,31.5) {$e$}
\end{overpic}
\vspace*{-2cm}
\caption{Global parametrizations of the curve $\Gamma$ and the associated restriction to $e$.}
\label{fig:globallocal}
}
\end{figure}
\begin{figure}[!h]
\center
{
\hspace*{-3cm}%
\begin{overpic}[scale=0.35]{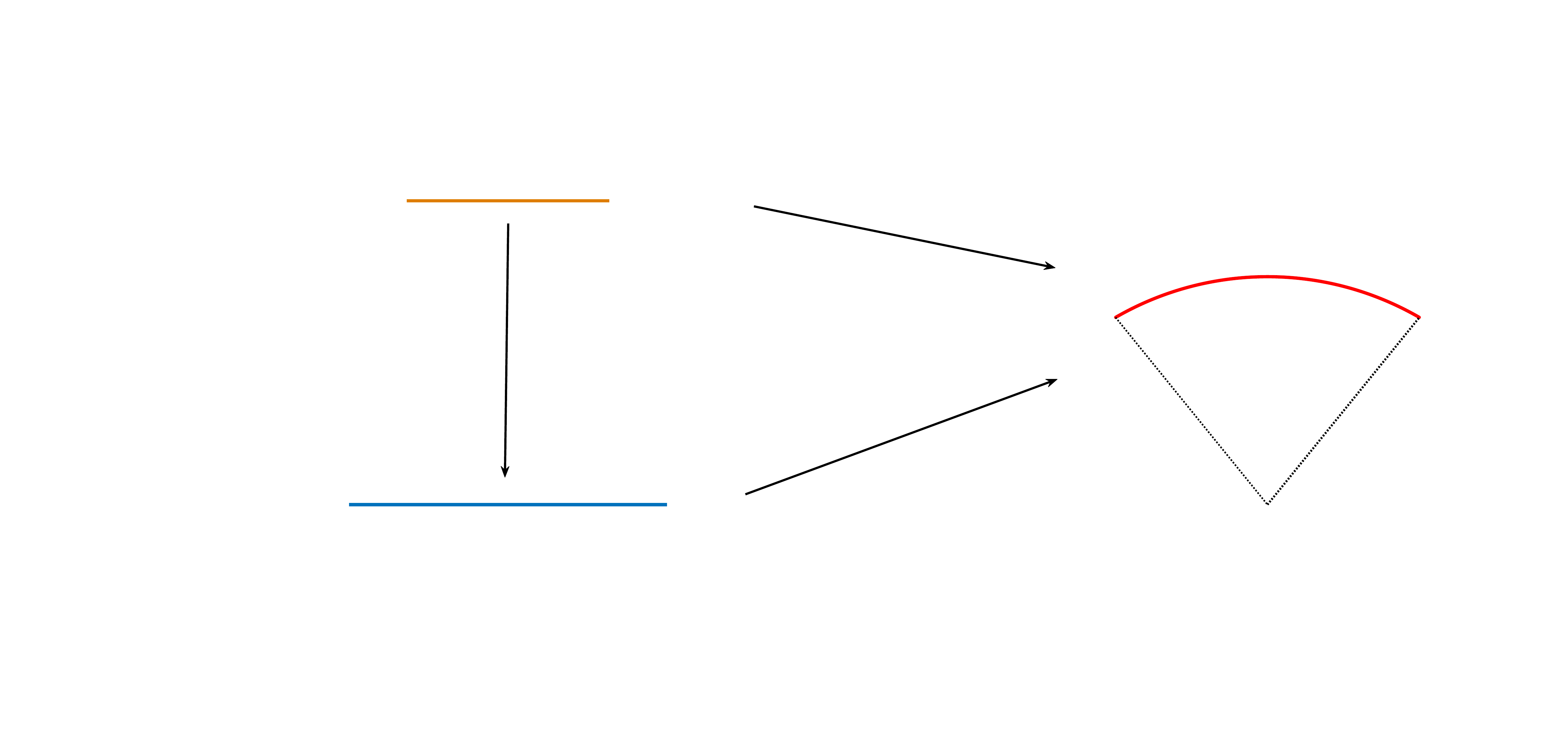} 
\put (57,16) {$\chi$}
\put (57,33) {$\gamma$}
\put (29,214) {$\zeta$}
\put (30,12) {$[0, \ell_{e}]$}
\put (32,35) {$I_e$}
\put (80,30) {$e$}
\put (80,23) {$E$}
\end{overpic}
\vspace*{-2cm}
\caption{Local parametrizations of the curved edge $e$ by means of the given parametrization $\gamma$ and the arc-length parametrization $\chi$, and natural parameter $\zeta$.}
\label{fig:local}
}
\end{figure}
We remark that, since the parametrization $\gamma \colon I \to \Gamma$ is fixed once and for all, under the assumption $\mathbf{(A0)}$, it easily follows that for any curved edge $e \subseteq \partial E$, the length of the interval $I_e$ is comparable with the diameter $h_E$ of
the element $E$. 
Indeed, since $\ell_e = \int_{I_e} \|\gamma'(s)\| \, {\rm d}s$ and $\gamma$, $\gamma^{-1} \in W^{1, \infty}$ are fixed once and for all, it holds that 
the length of the interval $I_e$ is comparable with  $\ell_e$.
Moreover, since $\gamma$ is fixed, when $h$ approaches  zero, roughly speaking,  the straight segment  $e'$ that shares the vertexes with  $e$ approaches the curved edge $e$. 
Therefore by assumption  $\mathbf{(A2)}$, for sufficiently small $h$, the length $\ell_e$ of the curved edge $e$ is comparable with the diameter $h_E$.
Since we have assumed that both $\gamma$  and $\chi$ are  invertible mappings on their respective domains of definition, following the scheme of Figure \ref{fig:local}, we can establish the following correspondences:
\begin{equation}
\label{eq:correspondences}
\begin{aligned}
& \hat{v} = v \circ \gamma &\qquad \check{v} &= v \circ \chi & \qquad &\text{for all \, $v \colon e \to \R$,} \\
& v = \hat{v} \circ \gamma^{-1}  &\qquad  \check{v} &= \hat{v} \circ \zeta^{-1} &   \qquad &\text{for all \, $\hat{v} \colon I_e \to \R$,} \\
& v = \check{v} \circ \chi^{-1} &\qquad \hat{v} &= \check{v} \circ \zeta &   \qquad &\text{for all \, $\check{v} \colon [0, \, \ell_e] \to \R$.} \\
\end{aligned}
\end{equation}

%
%
%
By definition of Sobolev norms on curves, using the notation in \eqref{eq:correspondences}, we can set
\begin{equation}
\label{eq:normcurves}
\|v\|_{s, e} := \|\check{v}\|_{H^s((0,\, \ell_e))} , \qquad \text{and} \qquad |v|_{s, e} := |\check{v}|_{H^s((0,\, \ell_e))} \qquad \text{for all positive real numbers $s$.}
\end{equation}
%
%
%
%
%
Let $k \geq 1$ and $m \geq k$ (cf. assumption $\mathbf{(A0)}$).  Let $e \subset \Gamma$, then we introduce the following useful notation:
\begin{equation}
\label{eq:ptilde}
\widetilde{\Pk_k}(e) := \{  \widetilde{q_k} = \widehat{q_k} \circ \gamma^{-1} \quad \text{s.t.} \quad \widehat{q_k} \in \Pk_k(I_e)\}, 
\end{equation}
i.e. $\widetilde{\Pk_k}(e)$ is made of all functions that are polynomials with respect to the parametrization $\gamma$.
It is worth pointing out that the space $\widetilde{\Pk_k}(e)$ defined in \eqref{eq:ptilde} generally contains functions which are not the restriction to $e$ of polynomials living on $\Omega$; in particular it corresponds to the space of polynomials if $\gamma$ is an affine map.

From now on, for the sake of simplicity, we assume that every element $E \in \Omega_h$ has at most one edge lying on $\Gamma$ (therefore only one edge of $E$ will be curved and the rest will be straight).  
Treating the case with more curved edges follows trivially the same construction. Moreover we assume that every curved edge lies on only one regular curve, i.e $e \subseteq \Gamma_i$ (this second condition is instead mandatory for the approach followed in this paper).
Let $E \in \Omega_h$ with $\partial E = \cup_{i=1}^{N_E} e_i$, where $e_1 \subset \Gamma$, and $e_2, \cdots, e_{N_E}$ are straight segments, we now introduce the local virtual space on the curved element $E$ (see Figure \ref{fig:element}):
\begin{multline}
\label{eq:virtualpace}
V_h^E := \bigl\{ v \in H^1(E) \cap C^0(E) \quad \text{s.t} \quad  -\Delta v \in \Pk_{k-2}(E), \bigr. \\
\bigl.  v_{|e_1} \in \widetilde{\Pk_k}(e_1), \quad v_{|e_i} \in \Pk_k(e_i) \quad \text{for $i=2, \dots, N_E$}  \bigr\}.
\end{multline}

\begin{figure}[!h]
\center{
\hspace*{-16cm}%
\begin{overpic}[scale=0.25]{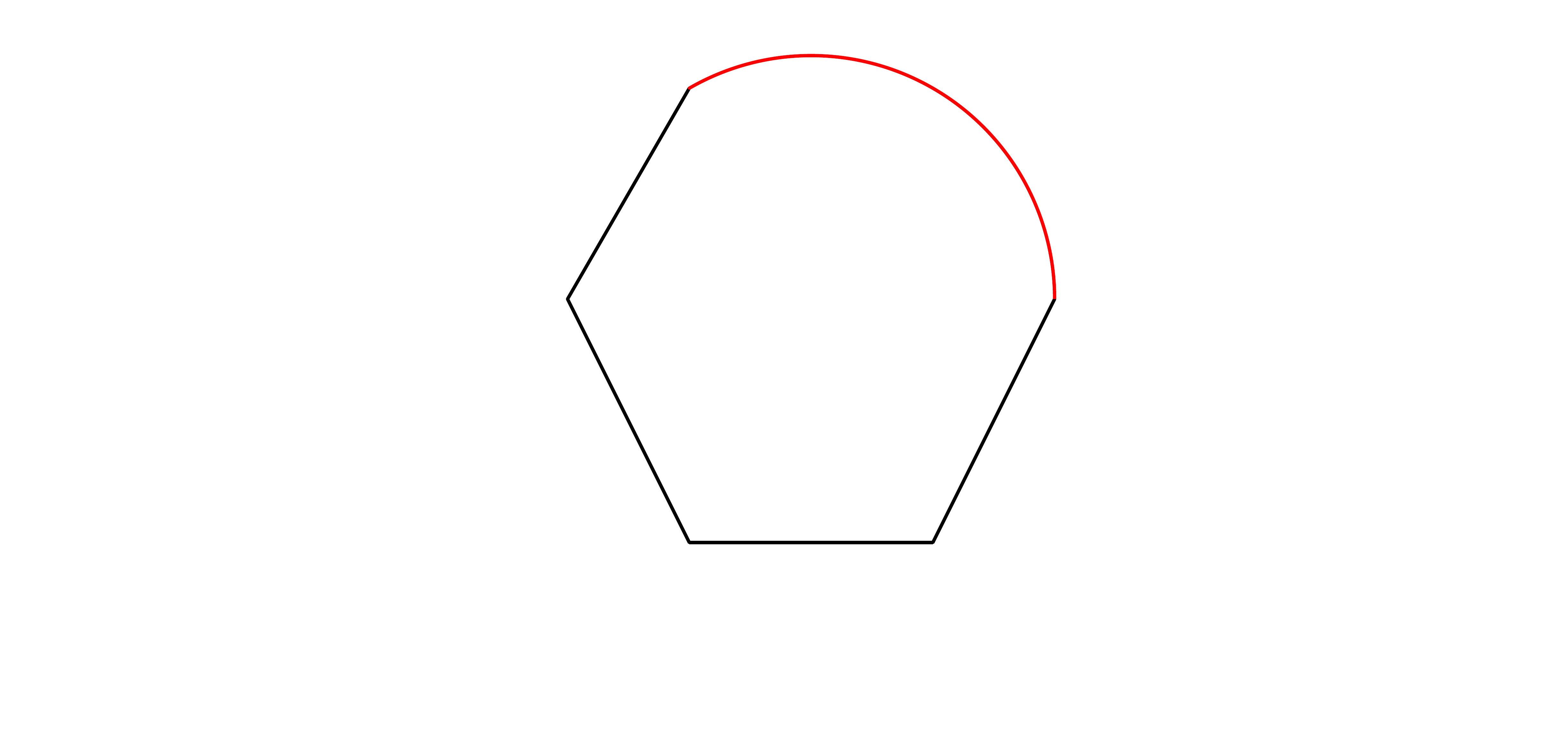}
\put (65,40) {{\red{$e_1$}}}
\put (35,35) {{$e_2$}}
\put (35,20) {{$e_3$}}
\put (50,10) {{$e_4$}}
\put (65,20) {{$e_5$}}
\put (50,28) {{$E$}}
\end{overpic}
\hspace*{-4cm}%
\begin{overpic}[scale=0.25]{elementpic.pdf}
\put (43,28) {{\blu{$- \Delta \,v  \in \Pk_{k-2}(E)$}}}
\put (65,40) {{\red{$\widetilde{\Pk_k}(e_1)$}}}
\put (30,35) {{$\Pk_k(e_2)$}}
\put (30,20) {{$\Pk_k(e_3)$}}
\put (48,10) {{$\Pk_k(e_4)$}}
\put (65,20) {{$\Pk_k(e_5)$}}
\end{overpic}
\hspace*{-15cm}%
\vspace*{-1cm}
\caption{Example of virtual space on curved element $E$.}
\label{fig:element}
}
\end{figure}
\begin{remark}
\label{rm:non inclusion}
From definition \eqref{eq:virtualpace} it is clear that differently to the standard case with straight edges, $\Pk_k(E) \not \subset V_h^E$.  Therefore we need to be more careful in the definition of the discrete approximated bilinear form $a_h(\cdot, \, \cdot)$ (see Section \ref{sub:2.3}) and in the interpolation analysis (see Section \ref{sec:3}). 
On the other hand it is easy to check that $\Pk_0(E) \subset V_h^E$.
\end{remark}

The corresponding degrees of freedom are chosen in accordance with the non curved case  (see Figure \ref{fig:dofsloc}).  

\begin{proposition}[Degrees of Freedom]
\label{prp:dofs}
The following linear operators $\boldsymbol{D}$, split into boundary operators $\boldsymbol{D^{\partial}}$ and internal ones $\boldsymbol{D^{o}}$,
constitute a set of degrees of freedom (DoFs) for $V_h^E$:
\begin{itemize}
\item $\boldsymbol{D^{\partial}_{\!I}}$:  the values of $v$ at the vertexes $V_i$  for $i=1, \dots, N_E$ of the element $E$,
\item $\boldsymbol{D^{\partial}_{\!I\!I}}$: the values of $v$ at $k-1$ internal points of the $(k+1)$-point  Gauss-Lobatto quadrature rule of every straight edge $e_2, \dots, e_{N_E} \in \partial E$,
\item $\boldsymbol{D^{\partial}_{\!I\!I\!I}}$: the values of $v$ at $k-1$ internal points of $e_1$ that are the images through $\gamma$ of the $k-1$ internal points of the  $(k+1)$-point  Gauss-Lobatto quadrature on $I_{e_1}$,
\item $\boldsymbol{D^o}$: the internal moments of $v$ against a  polynomial basis $\{m_i\}_{i=1}^{k(k-1)/2}$ of $\Pk_{k-2}(E)$
\[
\boldsymbol{D}^{\boldsymbol{o}}_i := \frac{1}{|E|} \int_E v \, m_i \, {\rm d}E, \qquad \text{with $\|m_i\|_{L^{\infty}(E)} = 1$.}
\]
\end{itemize}
\end{proposition}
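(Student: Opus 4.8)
The plan is to establish the two standard ingredients of a unisolvence proof: that the cardinality of $\boldsymbol{D}$ equals $\dim V_h^E$, and that a function $v \in V_h^E$ annihilated by every functional in $\boldsymbol{D}$ is necessarily zero. Since $\boldsymbol{D}$ acts as a linear map from $V_h^E$ into a Euclidean space whose dimension we will show to coincide with $\dim V_h^E$, injectivity is equivalent to bijectivity, and this equivalence is exactly what ``constitutes a set of DoFs'' means. So once the counting matches, it suffices to prove that $\boldsymbol{D}v = 0$ forces $v = 0$.

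First I would compute $\dim V_h^E$. The key observation is that every $v \in V_h^E$ is completely determined by the pair $(v_{|\partial E}, -\Delta v)$. Indeed, let $\mathcal{B}$ be the space of continuous functions on $\partial E$ whose restriction to $e_1$ lies in $\widetilde{\Pk_k}(e_1)$ and whose restriction to each straight $e_i$ lies in $\Pk_k(e_i)$. Given a trace $g \in \mathcal{B}$ and a datum $p \in \Pk_{k-2}(E)$, the Dirichlet problem $-\Delta v = p$ in $E$, $v = g$ on $\partial E$, has a unique solution in $H^1(E) \cap C^0(E)$ by elliptic well-posedness and the assumed edge regularity. Hence the map $v \mapsto (v_{|\partial E}, -\Delta v)$ is an isomorphism of $V_h^E$ onto $\mathcal{B} \times \Pk_{k-2}(E)$, so $\dim V_h^E = \dim \mathcal{B} + \tfrac{k(k-1)}{2}$. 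A count of the edgewise degree-$k$ traces, namely $N_E(k+1)$ edge parameters subject to $N_E$ vertex-continuity constraints, gives $\dim \mathcal{B} = N_E k$. Summing the four families of functionals ($N_E$ vertex values, $(N_E-1)(k-1)$ values on the straight edges, $(k-1)$ on $e_1$, and $\tfrac{k(k-1)}{2}$ internal moments) also yields $N_E k + \tfrac{k(k-1)}{2}$, so the counts match.

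For unisolvence, assume $\boldsymbol{D}v = 0$ and treat the trace edge by edge. On each straight edge the restriction $v_{|e_i} \in \Pk_k(e_i)$ is a degree-$k$ polynomial vanishing at the two endpoints (via $\boldsymbol{D^{\partial}_{\!I}}$) and at the $k-1$ internal Gauss--Lobatto nodes (via $\boldsymbol{D^{\partial}_{\!I\!I}}$); these $k+1$ distinct nodes force $v_{|e_i} = 0$. On the curved edge, $\widehat v = v \circ \gamma \in \Pk_k(I_{e_1})$, and since $\gamma$ is injective the functionals $\boldsymbol{D^{\partial}_{\!I}}$ and $\boldsymbol{D^{\partial}_{\!I\!I\!I}}$ prescribe $\widehat v$ at $k+1$ distinct parameter points, whence $\widehat v = 0$ and $v_{|e_1} = 0$. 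Thus $v_{|\partial E} = 0$. To conclude, set $p := -\Delta v \in \Pk_{k-2}(E)$; Green's formula together with the zero trace gives $\int_E |\nabla v|^2 = \int_E v\,p$, and the vanishing of the internal moments $\boldsymbol{D^o}$ means $\int_E v\,q = 0$ for all $q \in \Pk_{k-2}(E)$, in particular for $q = p$. Hence $\int_E |\nabla v|^2 = 0$, so $\nabla v = 0$ on the connected set $E$, and the zero trace forces $v = 0$.

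The only genuinely new point with respect to the classical straight-edge case is the handling of the curved edge, i.e.\ checking that point evaluation at the $\gamma$-images of the Gauss--Lobatto nodes is unisolvent on $\widetilde{\Pk_k}(e_1)$. I expect this to be the main place to be careful, but it reduces, via the pullback by the fixed injective map $\gamma$, to ordinary polynomial interpolation in the parameter $t$ on $I_{e_1}$, so no real obstacle arises; the dimension count and the Green's-formula argument are then formally identical to the polygonal setting.
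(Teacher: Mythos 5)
Your proof is correct and follows essentially the same route as the paper: the core new ingredient---unisolvence on the curved edge $e_1$ obtained by pulling back via the injective parametrization $\gamma$ to ordinary polynomial interpolation at the $k+1$ Gauss--Lobatto nodes of $I_{e_1}$---is exactly the paper's argument. The remaining steps you spell out (the dimension count via the Dirichlet-problem isomorphism, the straight-edge interpolation, and the Green's-formula argument killing the interior part) are precisely the ``standard VEM arguments'' that the paper invokes by reference rather than writing out.
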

\begin{proof}
We only sketch the very simple proof. 
The number of DoFs $\boldsymbol{D}$ is obviously equal to the dimension of the space $V_h^E$ in \eqref{eq:virtualpace}.
We need only to prove that $\boldsymbol{D^{\partial}_{\!I}}(v) = \mathbf{0}$ and $\boldsymbol{D^{\partial}_{\!I\!I\!I}}(v) = \mathbf{0}$ imply that $v_{|e_1} = 0$ since the rest of the proof follows standard VEM arguments \cite{serendipity}. 
Let $\{x_j\}_{j=1}^{k+1}$ denote the  points of the  $(k+1)$-point  Gauss-Lobatto quadrature on $I_{e_1}$,
then from definitions \eqref{eq:ptilde} and \eqref{eq:virtualpace} we have
$$
v_{|e_1}(\gamma({x_i})) =  \widetilde{q_k}(\gamma({x_i})) = \widehat{q_k}(x_i) = 0 \qquad \text{for $i=1, \dots, k+1$,} 
$$
that implies $\widehat{q_k} = 0$, and thus $v_{|e_1} = 0$. 
\end{proof}
\begin{figure}[!h]
\center{
\hspace*{-1.5cm}%
\includegraphics[scale=0.33]{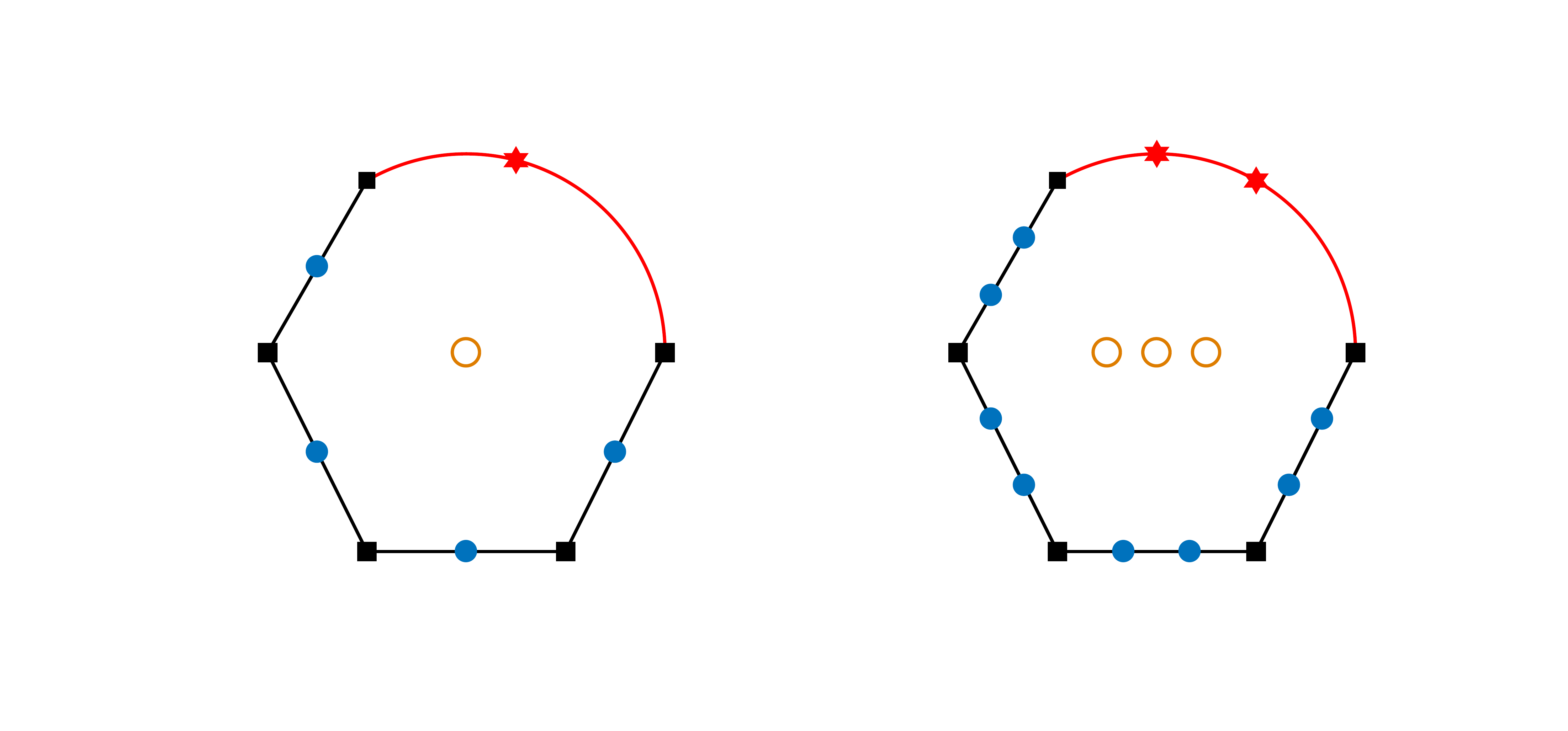} 
\vspace*{-1cm}
\caption{DoFs for $k=2$ (left) and $k=3$ (right). We denote $\boldsymbol{D^{\partial}_{\!I}}$ with the squares, $\boldsymbol{D^{\partial}_{\!I\!I}}$ with the dots,   $\boldsymbol{D^{\partial}_{\!I\!I\!I}}$ with  the hexagrams, $\boldsymbol{D^o}$ with the circles.}
\label{fig:dofsloc}
}
\end{figure}

\noindent
For future reference, 
we collect all the $k\, N_E$ boundary DoFs (the first three items above) and denote them with $\boldsymbol{D^{\partial}} := \{\boldsymbol{D}^{\boldsymbol{\partial}}_i\}_{i=1}^{k N_E}$.
Moreover 
we denote with $\boldsymbol{D^{e}} := \{\boldsymbol{D}^{\boldsymbol{e}}_i\}_{i=1}^{k+1}$ the DoFs $\boldsymbol{D^{\partial}}$ relative to the (closed) edge $e$.
For any $n \in \N$ and $E \in \Omega_h$ we introduce the following useful polynomial projections:
\begin{itemize}

\item the $H^1$ \textbf{semi-norm projection} ${\Pi}_{n}^{\nabla,E} \colon H^1(E) \to \Pk_n(E)$, defined by 
\begin{equation}
\label{eq:Pn_k^E}
\left\{
\begin{aligned}
& \int_E \nabla \,q_n \cdot \nabla (v - \, {\Pi}_{n}^{\nabla,E}   v) \, {\rm d} E = 0 \qquad  \text{for all $v \in H^1(E)$ and for all $q_n \in \Pk_n(E)$,} \\
& \int_{\partial E}  (v - \, {\Pi}_{n}^{\nabla,E}   v) \, {\rm d}S = 0 \,.
\end{aligned}
\right.
\end{equation} 

\item the $\boldsymbol{L^2}$\textbf{-projection} $\Pi_n^{0, E} \colon L^2(E) \to \Pk_n(E)$, given by
\begin{equation}
\label{eq:P0_k^E}
\int_E q_n (v - \, {\Pi}_{n}^{0, E}  v) \, {\rm d} E = 0 \qquad  \text{for all $v \in L^2(E)$  and for all $q_n \in \Pk_n(E)$.} 
\end{equation}
\end{itemize}
The following result extends to curved elements the analogous result on straight elements (see \cite{volley}). 
\begin{proposition}[Projections and Computability]
\label{prp:projections}
The DoFs $\boldsymbol{D}$ allow to compute exactly 
\[
\PN \colon V_h^E \to \Pk_k(E), \qquad
\P0 \colon V_h^E \to \Pk_{k-2}(E),
\]
in the sense that, given any $v_h \in V_h^E$, we are able to compute the polynomials
$\PN v_h$ and $\P0 v_h$ only using, as unique information, the degree of freedom values $\boldsymbol{D}$ of $v_h$.
\end{proposition}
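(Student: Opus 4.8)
The plan is to handle the two projectors separately, dispatching the immediate $\P0$ first and then reducing $\PN$, via an integration by parts, to the boundary trace of $v_h$, which the degrees of freedom encode. For $\P0$ the defining relation \eqref{eq:P0_k^E} states that for every $q_{k-2}$ in a basis of $\Pk_{k-2}(E)$ one has $\int_E q_{k-2}\,\P0 v_h\,{\rm d}E = \int_E q_{k-2}\,v_h\,{\rm d}E$. The right member is, up to the factor $|E|$, exactly an internal moment $\boldsymbol{D^o}$ of $v_h$, while the left member is a polynomial mass matrix applied to the unknown coefficients of $\P0 v_h$; inverting this (nonsingular) system yields $\P0 v_h$ from $\boldsymbol{D^o}$ alone.

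For $\PN$, expanding the first equation in \eqref{eq:Pn_k^E} gives, for every $q_k$, the identity $\int_E \nabla q_k \cdot \nabla \PN v_h\,{\rm d}E = \int_E \nabla q_k \cdot \nabla v_h\,{\rm d}E$. The left member is purely polynomial and linear in the unknown coefficients of $\PN v_h$; the crux is the right member, which I would rewrite by integration by parts as
\[
\int_E \nabla q_k \cdot \nabla v_h\,{\rm d}E = -\int_E (\Delta q_k)\, v_h\,{\rm d}E + \int_{\partial E} (\partial_{\mathbf{n}} q_k)\, v_h\,{\rm d}S .
\]
Since $q_k \in \Pk_k(E)$ forces $\Delta q_k \in \Pk_{k-2}(E)$, the volume term is again an internal moment, computable from $\boldsymbol{D^o}$. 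For the boundary term, the trace of $v_h$ on $\partial E$ is fully determined by the boundary degrees of freedom: on each straight edge $v_h|_{e_i}\in\Pk_k(e_i)$ is the Lagrange interpolant through $\boldsymbol{D^{\partial}_{\!I}}$ and $\boldsymbol{D^{\partial}_{\!I\!I}}$, while on the curved edge $v_h|_{e_1}\in\widetilde{\Pk_k}(e_1)$ is recovered through the unisolvence of Proposition~\ref{prp:dofs}, namely $\widehat{v_h}=v_h\circ\gamma\in\Pk_k(I_{e_1})$ is the Lagrange interpolant at the Gauss--Lobatto nodes on $I_{e_1}$, reconstructed from $\boldsymbol{D^{\partial}_{\!I}}$ and $\boldsymbol{D^{\partial}_{\!I\!I\!I}}$. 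The gradient condition determines $\PN v_h$ only up to an additive constant; this constant is then fixed by the second equation in \eqref{eq:Pn_k^E}, which requires only $\int_{\partial E} v_h\,{\rm d}S$, again a functional of the traces just described.

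The step demanding genuine care---and the one where the curved case departs from the classical straight-edge construction---is the boundary contribution on $e_1$. There the outward normal $\mathbf{n}$, the arc-length element ${\rm d}S$, and hence $\partial_{\mathbf{n}} q_k = \nabla q_k \cdot \mathbf{n}$ all vary along the edge and are dictated by the fixed parametrization $\gamma$, and $v_h|_{e_1}$ is not a polynomial in the physical coordinates but only in the parameter. Pulling back to $I_{e_1}$ via $\gamma$, the integral becomes $\int_{I_{e_1}} \bigl(\nabla q_k \circ \gamma\bigr)\cdot\bigl((\mathbf{n}\circ\gamma)\,\|\gamma'\|\bigr)\,\widehat{v_h}\,{\rm d}t$, where conveniently $(\mathbf{n}\circ\gamma)\,\|\gamma'\|$ is merely a fixed rotation of $\gamma'$, so that no square roots enter and the whole integrand is assembled from the known polynomial $q_k$, the given geometry $\gamma$, and the interpolant $\widehat{v_h}$. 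Thus the quantity depends solely on $\boldsymbol{D}$ and on the boundary parametrization. I expect this to be the main obstacle, but only at the level of actual evaluation: the integral is generally not available in closed form, and its numerical treatment is the subject of Section~\ref{sec:4}; its \emph{computability} in the sense claimed here---dependence only on the degrees of freedom and the fixed $\gamma$---is thereby established. Letting $q_k$ run over a basis of $\Pk_k(E)$ finally assembles the linear system whose solution is $\PN v_h$, completing the argument.
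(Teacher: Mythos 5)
Your proof is correct and follows essentially the same route the paper takes: the paper gives no explicit proof of Proposition \ref{prp:projections}, deferring to the standard straight-edge argument (the integration-by-parts computation you spell out, with $\P0$ read off from the internal moments and $\PN$ reduced to $\boldsymbol{D^o}$ plus the boundary trace) and recording in Remark \ref{rm:integration} the same caveat you make, namely that integrals of known functions over curved edges and elements with the given parametrization $\gamma$ are tacitly assumed computable (cf. Section \ref{sec:4}). Your treatment of the curved edge --- reconstructing $\widehat{v_h} \in \Pk_k(I_{e_1})$ from the Gauss--Lobatto values by the unisolvence of Proposition \ref{prp:dofs} and pulling the flux integral back to $I_{e_1}$ --- is precisely the needed extension of that argument to the curved setting.
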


\begin{remark}
\label{rm:integration}
In Proposition \ref{prp:projections} we are tacitly assuming that we can compute the integrals of polynomials on curved elements   and on curved edges with a given parametrization $\gamma$. 
More information on the adopted integration rules can be found in Section \ref{sec:4}.
\end{remark}
\begin{remark}
\label{rm:outside}
We observe that contrary to the straight case, the projection falls outside the virtual space $V_h^E$ (see Remark \ref{rm:non inclusion}).
\end{remark}
Finally we define the global virtual element space as
\begin{equation}
\label{eq:V_h}
V_h := \{ v \in H^1_0(\Omega)  \quad \text{s.t} \quad v_{|E} \in V_h^E  \quad \text{for all $E \in \Omega_h$} \}
\end{equation} 
with the obvious associated sets of global degrees of freedom. A simple computation shows that:
\begin{equation*}
\dim(V_h) = n_P \, \frac{k(k-1)}{2} 
+ n_V + (k-1) n_e
\end{equation*}
where $n_P$, $n_e$, $n_V$ are respectively the number of elements, internal edges and vertexes in $\Omega_h$.

\begin{remark}
\label{rm:enhanced}
Using the same ideas of \cite{Ahmed-et-al:2013}, it would also be possible to introduce a slightly different virtual space $V_h^E$ (that will not be used in the present work) to be used in place of the original one defined in \eqref{eq:virtualpace} in order to
 improve the results of Proposition \ref{prp:projections} and compute exactly also the following higher order projection 
\[
\Pi_k^{0, E} \colon V_h^E \to \Pk_{k}(E).
\] 
\end{remark}

\subsection{Discrete bilinear forms and load term approximation}
\label{sub:2.3}

The next step in the construction of our method is to define a discrete version of the gradient-gradient form $a(\cdot, \cdot)$.
First of all we decompose into local contributions the bilinear form $a(\cdot, \, \cdot)$ by defining 
\begin{equation*}
a (u, \, v) =: \sum_{E \in \Omega_h} a^E (u, \, v) \qquad \text{for all $u, v \in {V}$.}
\end{equation*}
We note that for an arbitrary pair $(u, \, v) \in V_h^E \times V_h^E $, the quantity $a^E(u, \, v)$ is clearly not computable.  Therefore, following the usual procedure in the VEM setting, we define a computable discrete local bilinear form (being careful that in general $\Pk_k(E) \not \subset V_h^E$)
\begin{equation}
\label{eq:a_h^E} 
a_h^E(\cdot, \cdot) \colon [V_h^E + \Pk_k(E)] \times  [V_h^E + \Pk_k(E)] \to \R
\end{equation}
approximating the continuous form $a^E(\cdot, \, \cdot)$, and defined by
\begin{equation}
\label{eq:a_h^E def}
a_h^E(u, \, v) := a^E \left(\PN u, \, \PN v \right) + \mathcal{S}^E \left((I - \PN) u, \, (I -\PN) v \right)
\end{equation}
for all $u, v \in V_h^E$, where the (symmetric) stabilizing bilinear form 
$$\mathcal{S}^E \colon [V_h^E + \Pk_k(E)] \times  [V_h^E + \Pk_k(E)] \to \R$$
is defined by \cite{volley,2016stability}
\begin{equation}
\label{eq:s^e}
\mathcal{S}^E (\eta, \, \sigma) := \sum_{i=1}^{k(k-1)/2} \boldsymbol{D}^{\boldsymbol{o}}_i(\eta)\boldsymbol{D}^{\boldsymbol{o}}_i(\sigma) +  \sum_{i=1}^{k N_E}  \boldsymbol{D}^{\boldsymbol{\partial}}_i(\eta)\boldsymbol{D}^{\boldsymbol{\partial}}_i(\sigma)
\end{equation} 
for all $\eta, \, \sigma \in [V_h^E + \Pk_k(E)]$.

For the stabilizing form $\mathcal{S}^E (\cdot, \, \cdot)$  it can be possible to consider also other options (see \cite{2016stability}).
%
%
We define the global approximated bilinear form 
$$a_h(\cdot, \cdot) \colon \left[V_h + \Pi_{E \in \Omega_h} \Pk_k(E) \right] \times \left[V_h + \Pi_{E \in \Omega_h} \Pk_k(E) \right] \to \R$$
by simply summing the local contributions:
\begin{equation}
\label{eq:a_h}
a_h(u,\,  v) := \sum_{E \in \Omega_h}  a_h^E(u, \, v) \qquad \text{for all $u, \, v \in \left[V_h + \Pi_{E \in \Omega_h} \Pk_k(E) \right]$.}
\end{equation}

The last step consists in constructing a computable approximation of the right-hand side $(f, \, v)$ in \eqref{eq:continua}.  We define the approximated load term $f_h$ (see \cite{volley,Ahmed-et-al:2013}) for $k \geq 2$ as 
\begin{equation} 
\label{eq:f_h}
f_h := \Pi_{k-2}^{0,E} f \qquad \text{for all $E \in \Omega_h$,}
\end{equation}
and consider:
\begin{equation}
\label{eq:right}
(f_h, v)  = \sum_{E \in \Omega_h} \int_E f_h \, v \, {\rm d}E = \sum_{E \in \Omega_h} \int_E \Pi_{k-2}^{0,E}f \, v \, {\rm d}E = \sum_{E \in \Omega_h} \int_E f \, \Pi_{k-2}^{0,E}  v \, {\rm d}E,
\end{equation}
whereas for $k=1$ we approximate $f$ by a piecewise constant, and consider:
\begin{equation}
\label{eq:right1}
(f_h, v)  = \sum_{E \in \Omega_h} \left( |E| \, \left(\Pi_0^{0, E} f \right) \, \frac{1}{N_E} \sum_{i=1}^{N_E}  \boldsymbol{D}^{\boldsymbol{\partial}}_i(v) \right)\,.
\end{equation}
We observe that \eqref{eq:right} and \eqref{eq:right1} can be exactly computed from $\boldsymbol{D}$ for all $v \in V_h$ (see Proposition \ref{prp:projections}). 


%

%

%


\subsection{Virtual Element Problem}
\label{sub:2.4}

We are now ready to state the proposed discrete problem. Referring to~\eqref{eq:V_h}, ~\eqref{eq:a_h} and either ~\eqref{eq:right} or \eqref{eq:right1}, we consider the \textbf{virtual element problem}:
\begin{equation}
\label{eq:virtual}
\left\{
\begin{aligned}
& \text{find $u_h \in V_h$, such that} \\
& a_h(u_h, \, v_h)  = (f_h, \, v_h) \qquad & \text{for all $v_h \in V_h$.} \\
\end{aligned}
\right.
\end{equation}
By construction the bilinear $a_h(\cdot,\,  \cdot)$ is symmetric. Therefore, the existence and the uniqueness of the solution to Problem \eqref{eq:virtual} will follow if $a_h(\cdot,\,  \cdot)$  is also  coercive on $V_h$, which is investigated in Section \ref{sub:stability}.

\begin{remark}
It is worth to point out that
if $\Omega$ is a straight polygon (i.e. if $\Gamma$ is made up of a finite number of straight sides), we recover the standard VEM  \cite{volley}.
\end{remark}

\section{Theoretical analysis}
\label{sec:3}

\subsection{Interpolation estimates}
\label{sub:3.1}
In this section we prove that the virtual space on the curved element $V_h$ (cf. \eqref{eq:V_h}) 
has the optimal approximation order (also in higher order norms).
We start our analysis with the following technical lemma (simple consequence of \cite{Ciarlet:1972},  Lemma 3).
\begin{lemma}
\label{lm:composition}
Let $U$, $V$, $Z \subset \R$ be three open intervals and $f \colon U \to V$, $g \colon V \to Z$ two mappings with $f \in W^{m, \infty}(U)$ and $g \in H^m(V)$. Then the mapping $h = g \circ f  \colon U \to Z$ is  in $H^m(U)$   and
\[
|h|_{H^m(U)} \leq \alpha \, \sum_{l=1}^m  \left( |g|_{H^l(V)}  \, \sum_{\boldsymbol{i} \in I(l, m)} \|D f\|_{L^{\infty}(U)}^{i_1} \|D^2 f\|_{L^{\infty}(U)}^{i_2} \dots \|D^m f\|_{L^{\infty}(U)}^{i_m} \right)
\]
where $\alpha$ is a constant depending only upon $m$, and
\[
I(l, m) = \{\boldsymbol{i} = (i_1, i_2, \dots, i_m) \in \N^m \, | \, i_1 + i_2 + \dots + i_m = l, \, i_1 + 2 i_2 + \dots + m i_m = m\}.
\]
\end{lemma}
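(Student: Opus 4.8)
The plan is to reduce the estimate to the classical Faà di Bruno formula for the $m$-th derivative of a composition. Since $V\subset\R$ is an interval, the one-dimensional Sobolev embedding gives $g\in C^{m-1}(\overline V)$, so $g,g',\dots,g^{(m-1)}$ are continuous and bounded while $g^{(m)}\in L^2(V)$. Approximating $g$ by smooth functions $g_n\to g$ in $H^m(V)$ and keeping $f\in W^{m,\infty}(U)$ fixed, it is enough to prove the inequality for smooth $g$ and then pass to the limit, since the $f$-dependent factors below are fixed $L^\infty$ functions. For smooth $g$, iterating the chain rule $m$ times yields
\begin{equation*}
D^m h(x)=\sum_{l=1}^m g^{(l)}(f(x))\sum_{\boldsymbol{i}\in I(l,m)} c_{\boldsymbol{i}}\prod_{j=1}^m\big(D^j f(x)\big)^{i_j},
\end{equation*}
with purely combinatorial coefficients $c_{\boldsymbol{i}}=m!\,/\,\big(\prod_j i_j!\,(j!)^{i_j}\big)$ depending only on $m$. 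The index set $I(l,m)$ is exactly the one in the statement: the constraint $\sum_j i_j=l$ records that $g$ is differentiated $l$ times, while $\sum_j j\,i_j=m$ records that the derivatives of $f$ redistribute all $m$ differentiations.

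Next I would take the $L^2(U)$-norm of this identity and use the triangle inequality over the finite sum, obtaining
\begin{equation*}
|h|_{H^m(U)}\le\sum_{l=1}^m\sum_{\boldsymbol{i}\in I(l,m)} c_{\boldsymbol{i}}\,\Big\|\prod_{j=1}^m\big(D^j f\big)^{i_j}\Big\|_{L^\infty(U)}\,\big\|g^{(l)}\circ f\big\|_{L^2(U)}.
\end{equation*}
The $L^\infty$-norm of the product of $f$-derivatives is bounded by $\prod_{j=1}^m\|D^j f\|_{L^\infty(U)}^{i_j}$, which is precisely the factor in the statement, and each semi-norm $|g|_{H^l(V)}$ will be retained separately as written. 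It then remains only to control $\big\|g^{(l)}\circ f\big\|_{L^2(U)}$ by $|g|_{H^l(V)}=\|g^{(l)}\|_{L^2(V)}$.

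The main obstacle is exactly this last step, the change of variables. Using that $f$ is regular and invertible, I substitute $y=f(x)$ to get
\begin{equation*}
\big\|g^{(l)}\circ f\big\|_{L^2(U)}^2=\int_{f(U)}|g^{(l)}(y)|^2\,\frac{1}{|f'(f^{-1}(y))|}\,{\rm d}y\le\big\|1/f'\big\|_{L^\infty(U)}\,|g|_{H^l(V)}^2,
\end{equation*}
since $f(U)\subseteq V$. Here lies the delicate point: the Jacobian weight $1/|f'|$ introduces a factor $\|1/f'\|_{L^\infty(U)}^{1/2}$, so the estimate is uniform — absorbable into a constant depending only on $m$ — precisely when $|f'|$ is bounded away from zero. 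This is guaranteed in every application in the paper, where $f$ is one of the bi-Lipschitz reparametrizations $\gamma^{-1},\chi,\zeta,\zeta^{-1}$, for which both the map and its inverse lie in the relevant $W^{\cdot,\infty}$ spaces (equivalently $\|(f^{-1})'\|_{L^\infty}<\infty$). Folding the finitely many constants $c_{\boldsymbol{i}}$ together with this geometric factor into a single $\alpha$ then gives the asserted inequality.
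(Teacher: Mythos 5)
Your argument is correct, and there is essentially nothing internal to compare it with: the paper does not prove this lemma at all, presenting it as a ``simple consequence'' of Lemma 3 in \cite{Ciarlet:1972}. What you have written is a reconstruction of the proof of that cited result: Fa\`a di Bruno's formula for smooth $g$, density of smooth functions in $H^m(V)$, $L^\infty$ control of the products of derivatives of $f$, and a change of variables to convert $\|g^{(l)}\circ f\|_{L^2(U)}$ back into $|g|_{H^l(V)}$. Moreover, the subtlety you isolate in the last step is genuine and worth recording: as literally stated, with $\alpha$ depending only on $m$, the lemma is false. Indeed, take $U=V=(0,1)$, $f(x)=\epsilon x$, and $g\in H^m(V)$ with $\|g^{(m)}\|_{L^2(V)}=1$ supported in $(0,\epsilon)$; since $D^jf\equiv 0$ for $j\ge 2$, every term with $l<m$ on the right-hand side vanishes, so the claimed bound reads $\epsilon^{m-1/2}=|h|_{H^m(U)}\le \alpha\,\epsilon^m$, which fails for $\epsilon$ small. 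The correct statement --- and the one actually proved in \cite{Ciarlet:1972} --- carries exactly the Jacobian weight $\|1/f'\|_{L^\infty(U)}^{1/2}$ (in $n$ dimensions and $L^p$, the factor $\|(\det DF)^{-1}\|_{L^\infty}^{1/p}$) that your change of variables produces; the injectivity of $f$ and the lower bound on $|f'|$ are also what make $g^{(l)}\circ f$ well defined almost everywhere when $g^{(l)}$ is a mere $L^2$ function. None of this damages the paper: in all applications $f$ is $\zeta$ or $\zeta^{-1}$, which under assumption $\mathbf{(A0)}$ and the regularity of $\gamma$ are uniformly bi-Lipschitz (the paper itself notes both lie in $W^{m+1,\infty}$), so the extra factor is absorbed into the constants of Lemma \ref{lm:edgeinterpolation}, which are anyway declared to depend on $\|\zeta\|_{W^{s,\infty}}$ and $\|\zeta^{-1}\|_{W^{s,\infty}}$. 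Strictly speaking, though, the statement should either assume $f$ bi-Lipschitz or let $\alpha$ depend on the essential infimum of $|f'|$, precisely as your proof does.
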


\begin{lemma}
\label{lm:edgeinterpolation}
Let $e \subset \partial E$ be a (possibly curved) edge of an element $E \in \Omega_h$ and let $v \in H^s(e)$ for $1 < s \leq k+1$. Let $v_{\rm I} \in \widetilde{\Pk_k}(e)$ be the function determined by
\begin{equation}
\label{eq:boundaryinterpolant}
\boldsymbol{D^{e}} (v  - v_{\rm I}) = 0.
\end{equation}
 Then for all $0 \leq m \leq s$
\begin{equation}
\label{eq:interpolation}
|v - v_{\rm I}|_{m, e} \leq C \, h_E^{s-m} \, \|v\|_{s, e}
\end{equation}
where the constant $C$ depends on  $k$ and, if the edge is curved, on  $\|\zeta^{-1}\|_{W^{s, \infty}}$ and $\|\zeta\|_{W^{s, \infty}}$.
\end{lemma}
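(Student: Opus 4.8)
The plan is to transport the whole estimate to the reference interval $I_e$ through the parametrization $\gamma$, to reduce the interpolation to classical one-dimensional Lagrange interpolation there, and finally to translate the resulting bound back into the arc-length Sobolev norms of \eqref{eq:normcurves} by means of the composition Lemma \ref{lm:composition}.

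First I would observe that, by the very definition of $\widetilde{\Pk_k}(e)$ in \eqref{eq:ptilde} and of the edge degrees of freedom $\boldsymbol{D^e}$, the condition \eqref{eq:boundaryinterpolant} is equivalent, after pulling back by $\gamma$, to requiring that $\widehat{v_{\rm I}} := v_{\rm I}\circ\gamma \in \Pk_k(I_e)$ interpolate $\widehat{v} := v\circ\gamma$ at the $k+1$ Gauss--Lobatto nodes of $I_e$. Hence $\widehat{v_{\rm I}}$ is exactly the classical Lagrange interpolant of $\widehat v$ at those nodes (well defined since $s>1$ gives $\widehat v \in C^0(I_e)$). Since this operator reproduces $\Pk_k(I_e)$, a Bramble--Hilbert argument combined with the scaling $|I_e|\sim h_E$ (recall the comparability of $|I_e|$, $\ell_e$ and $h_E$ discussed after \eqref{eq:ptilde}) yields, for all integers $0\le \mu\le\sigma$ with $\sigma\le k+1$,
\begin{equation*}
|\widehat v - \widehat{v_{\rm I}}|_{H^\mu(I_e)} \;\le\; C\, |I_e|^{\sigma-\mu}\,|\widehat v|_{H^\sigma(I_e)},
\end{equation*}
with $C=C(k)$.

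Next I would pass from the $\gamma$-variable to the arc-length variable, in which the norms $|\cdot|_{m,e}$ and $\|\cdot\|_{s,e}$ are actually measured. Using the correspondences \eqref{eq:correspondences}, namely $\check v = \widehat v\circ\zeta^{-1}$ and $\widehat v = \check v\circ\zeta$, I apply Lemma \ref{lm:composition} twice: once to $\check v - \check{v_{\rm I}} = (\widehat v - \widehat{v_{\rm I}})\circ\zeta^{-1}$ (the case $\mu=0$ being a plain change of variables in $L^2$), and once to $\widehat v = \check v\circ\zeta$. The decisive point is that $\gamma$, hence the natural parameter $\zeta$, is fixed once and for all: $\zeta'=\|\gamma'\|$ is bounded above and below away from zero uniformly in $h$, and all the derivatives $\|D^j\zeta\|_{L^\infty}$, $\|D^j\zeta^{-1}\|_{L^\infty}$ are $O(1)$, so that the change of variables contributes no power of $h$ and only the constants $\|\zeta\|_{W^{s,\infty}}$, $\|\zeta^{-1}\|_{W^{s,\infty}}$. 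Since in the composition estimate the lower-order seminorms $l\le\mu$ carry the higher powers $|I_e|^{\sigma-l}=|I_e|^{\sigma-\mu}|I_e|^{\mu-l}\le |I_e|^{\sigma-\mu}$ (for $h\lesssim 1$), and since $\sum_{l\le\sigma}|\check v|_{H^l(0,\ell_e)}\lesssim \|\check v\|_{H^\sigma(0,\ell_e)} = \|v\|_{\sigma,e}$, I obtain the bound $|v-v_{\rm I}|_{\mu,e}\le C\,h_E^{\sigma-\mu}\|v\|_{\sigma,e}$ for all integer pairs $(\mu,\sigma)$.

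Finally, the non-integer values of $m$ and $s$ would follow by real interpolation. The error map $v\mapsto v - v_{\rm I}$ is linear, and the scale $\{H^r(e)\}_r$ — defined via the arc-length parametrization in \eqref{eq:normcurves} — is an interpolation scale; interpolating the integer-corner bounds simultaneously in the source index $s$ and the target index $m$ produces \eqref{eq:interpolation} for all real $0\le m\le s\le k+1$, with the dependence of $C$ inherited from the integer case. I expect the main obstacle to be precisely this interplay between the change of variables and the $h$-scaling: one must check that $\zeta$ introduces no spurious factor of $h$ and that the constant genuinely depends only on $\|\zeta\|_{W^{s,\infty}}$ and $\|\zeta^{-1}\|_{W^{s,\infty}}$, while the fractional-order bookkeeping in the final interpolation step, though standard, requires choosing the integer corners surrounding $(m,s)$ with some care.
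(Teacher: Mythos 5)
Your proposal is correct and follows essentially the same route as the paper's proof: identify $\widehat{v_{\rm I}}$ as the Gauss--Lobatto Lagrange interpolant of $\widehat v$ on $I_e$, apply standard 1D polynomial approximation with the scaling $|I_e|\sim h_E$, transfer between the arc-length and parameter variables by applying the composition Lemma \ref{lm:composition} in both directions (through $\zeta^{-1}$ and $\zeta$), and conclude the fractional cases by interpolation of operators on the Sobolev scale. The paper merely orders the steps slightly differently (composition via $\zeta^{-1}$ first, then the polynomial estimate, then composition via $\zeta$), with the same constants and the same bookkeeping of powers of $h_E$.
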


\begin{proof}
We focus on the case of $e$ curved edge, since the straight case is trivial.
We first suppose that $s$ is an integer.
Note that, since  $v_{\rm I} \in \widetilde{\Pk_k}(e)$, we can write $v_{\rm I} = \widehat{q_k} \circ \gamma^{-1}$, with $\widehat{q_k} \in \Pk_k(I_e)$. 
Then using the notation in \eqref{eq:correspondences} and by definition of Sobolev norm \eqref{eq:normcurves}, we get
\begin{equation}
\label{eq:zero}
|v - v_{\rm I} |_{m, e} = \left|\check{v} - \check{v_{\rm I}}\right|_{H^m((0, \, \ell_e))}  
=\left| \left( \hat{v} - \hat{v_{\rm I}} \right) \circ \zeta^{-1}  \right|_{H^m((0, \, \ell_e))} = \left| \left( \hat{v} - \widehat{q_k} \right) \circ \zeta^{-1}  \right|_{H^m((0, \, \ell_e))}.
\end{equation}
From \eqref{eq:zero} applying Lemma \ref{lm:composition}, we get
\begin{equation}
\label{eq:first}
|v - v_{\rm I} |_{m, e} 
\leq C
\sum_{l=1}^m  \left( |\hat{v} - \widehat{q_k}|_{H^l(I_e)}  \, \sum_{i \in I(l, m)} \|D \zeta^{-1}\|^{ i_1}_{L^{\infty}} \|D^2 \zeta^{-1}\|^{ i_2}_{L^{\infty}} \dots \|D^m \zeta^{-1} \|^{ i_m}_{L^{\infty}}
\right) \,.
\end{equation}
We notice now that, from \eqref{eq:boundaryinterpolant}, $\widehat{q_k}$ is the $k$-degree polynomial interpolant of $\hat{v}$ in the Gauss-Lobatto nodes. Furthermore, as we have already observed in Section \ref{sub:2.2}, it easily follows that the length of the interval $I_e$ is comparable with the diameter $h_E$ of the element $E$. 
Therefore, from \eqref{eq:first} and standard polynomial approximation results \cite{brennerscott}, we can conclude that
\begin{equation}
\label{eq:first1}
|v - v_{\rm I} |_{m, e} 
\leq
C \, h_E^{s-m} \,  |\hat{v}|_{H^{s}(I_e)}
\end{equation}
where the constant $C$   depends on $\|D \zeta^{-1}\|_{W^{m, \infty}((0, \, \ell_e))}$, that is uniformly bounded since it does not depend on the mesh (both $\gamma$ and $\zeta$ are fixed once and for all).
Using again Lemma \ref{lm:composition}  we obtain
\begin{equation}
\label{eq:third}
|\hat{v}|_{H^{s}(I_e)} \leq C 
\sum_{l=1}^s  \left( |\check{v}|_{H^l((0, \ell_e))}  \, \sum_{i \in I(l, m)} \|D \zeta\|^{ i_1}_{L^{\infty}} \|D^2 \zeta\|^{ i_2}_{L^{\infty}} \dots \|D^s \zeta \|^{ i_s}_{L^{\infty}} \right)
\end{equation}
Collecting  \eqref{eq:first1} and \eqref{eq:third} in \eqref{eq:zero} and recalling 
\eqref{eq:normcurves}  we obtain the thesis for integer $s$.\\
For non integer $s = n + \sigma$ with $n \in \N$ and $ 0< \sigma < 1$, using a standard result of interpolation theory concerning operators on Banach spaces (see Theorem 14.2.3 and Proposition 14.1.5 in \cite{brennerscott}) it follows that
\[
\|v - v_{\rm I}\|_{m, e} \leq C \, h_E^{(n - m) (1 - \sigma)} \, h_E^{ (n +1 - m) \, \sigma}\| v\|_{s, e} =  C \, h_E^{s-m}\| v\|_{s, e}.
\]
\end{proof}

\begin{remark}
We notice that if the given parametrization $\gamma$ is the arc-length parametrization of   the edge, the constant $C$ in \eqref{eq:interpolation} depends only on $k$.
\end{remark}
The following technical lemma extends to curved elements the result of Lemma 6.1 in \cite{2016stability}.
\begin{lemma}[Trace Theorem]
\label{lm:tracethm}
Let $E \in \Omega_h$. Under the assumptions $\mathbf{(A0)}$, $\mathbf{(A1)}$ and $\mathbf{(A2)}$,  
for all $v \in H^{1+\epsilon}(E)$ with $0 \leq \epsilon < \frac{1}{2}$, it holds
\begin{equation}
\label{eq:trace1}
| v|_{1/2 + \epsilon, \partial E} \leq C \, |v|_{1+ \epsilon, E} \,.
\end{equation}
The constant $C$ in the previous estimate depends only on the shape regularity constant $\rho$ and $\|\gamma'\|_{L^{\infty}}$, 
in particular $C$ does not depend on $h_E$.
\end{lemma}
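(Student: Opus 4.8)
The plan is to reduce the curved estimate to the already established polygonal one (Lemma~6.1 of \cite{2016stability}) by two devices: a dilation that normalizes the size of $E$, and a controlled straightening of the single curved edge.

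First I would rescale. Setting $\hat{x} = x/h_E$ and $\hat{v}(\hat{x}) = v(h_E \hat{x})$ maps $E$ onto an element $\hat{E}$ of unit diameter which still satisfies $\mathbf{(A1)}$--$\mathbf{(A2)}$ with the same $\rho$. A direct change of variables in the Gagliardo seminorms shows that both sides of \eqref{eq:trace1} carry the same power of $h_E$: one finds $|v|_{1+\epsilon,E} = h_E^{-\epsilon}\,|\hat{v}|_{1+\epsilon,\hat{E}}$ and $|v|_{1/2+\epsilon,\partial E} = h_E^{-\epsilon}\,|\hat{v}|_{1/2+\epsilon,\partial\hat{E}}$. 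Hence \eqref{eq:trace1} is scale invariant, the factor $h_E$ cancels, and it suffices to prove the inequality on elements of unit diameter with a constant uniform over the admissible family. This is precisely what produces the claimed independence of $C$ on $h_E$.

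On the unit-size element $\hat{E}$ I would then straighten the (unique) curved edge. Let $\hat{E}'$ be the polygon obtained by replacing the curved edge $e_1$ with the straight chord joining its endpoints, and construct a bi-Lipschitz homeomorphism $\Phi \colon \hat{E} \to \hat{E}'$ that is the identity away from a neighbourhood of $e_1$ and carries $e_1$ onto the chord (for instance a transfinite/blending deformation built from the arc-length parametrization $\chi$). Because $\gamma$, and hence $\chi$ and $\zeta$, are fixed once and for all and lie in $W^{1,\infty}$, and because after rescaling the arc has curvature of order $h_E$, the norms $\|\Phi\|_{W^{1,\infty}}$ and $\|\Phi^{-1}\|_{W^{1,\infty}}$ are bounded solely in terms of $\rho$ and $\|\gamma'\|_{L^{\infty}}$. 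Composition with $\Phi$ then transports the two relevant seminorms back and forth with equivalence constants of the same type: on the boundary this is the stability of $H^{1/2+\epsilon}$ (with $1/2+\epsilon<1$) under bi-Lipschitz maps, and in the interior the corresponding stability of $H^{1+\epsilon}$. Applying the polygonal trace inequality (Lemma~6.1 of \cite{2016stability}) to $\hat{v}\circ\Phi^{-1}$ on $\hat{E}'$ and pulling the estimate back through $\Phi$ yields \eqref{eq:trace1} on $\hat{E}$, and then on $E$ by the scaling of the first step.

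The main obstacle is the interior transfer of the top-order fractional part: controlling $|\hat{v}\circ\Phi^{-1}|_{1+\epsilon,\hat{E}'}$ by $|\hat{v}|_{1+\epsilon,\hat{E}}$ requires not merely $\Phi\in W^{1,\infty}$ but enough regularity of $D\Phi$ to absorb the fractional derivative of the gradient (essentially a $C^{1,\epsilon}$, i.e. $W^{2,\infty}$-type, control of the straightening map), which is where the smoothness of $\gamma$ in $\mathbf{(A0)}$ and the uniform bounds on $\zeta,\zeta^{-1}$ enter decisively; keeping the resulting constant dependent only on $\rho$ and $\|\gamma'\|_{L^{\infty}}$, and not on $h_E$, is the delicate bookkeeping. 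An alternative that sidesteps the explicit straightening map is to build a Stein-type extension of $\hat{v}$ to $\R^2$ whose norm is controlled by the (uniform) Lipschitz character of $\partial\hat{E}$ guaranteed by $\mathbf{(A1)}$--$\mathbf{(A2)}$ together with $\|\gamma'\|_{L^{\infty}}$, and then to invoke the trace theorem on the whole plane; the same fractional-order uniformity is the crux there as well.
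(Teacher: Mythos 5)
Your rescaling step is correct: a change of variables in the Gagliardo seminorms shows both sides of \eqref{eq:trace1} scale like $h_E^{-\epsilon}$, so it suffices to prove the bound on elements of unit diameter. Your overall route is also genuinely different from the paper's: you reduce to the polygonal Lemma~6.1 of \cite{2016stability} by straightening the curved edge, whereas the paper never straightens anything --- it maps $E$ directly onto the inscribed ball $B_E$ by the radial map $F$ in polar coordinates, proves that $F$ and $F^{-1}$ are \emph{uniformly} $W^{1,\infty}$ (a blow-up of $\Psi'$ would contradict the star-shapedness $\mathbf{(A1)}$), and concludes by pull-back/push-forward combined with the trace theorem on the disk. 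In that argument the curved edge costs nothing extra; it only enters the uniform bound on $\Psi'$.

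The genuine gap is that the central object of your proof --- a straightening map $\Phi$ with uniform \emph{global} $C^{1,\epsilon}$ (or $W^{2,\infty}$) bounds --- is asserted, not constructed, and in your approach this construction is essentially the whole lemma. Concretely: (i) at the two endpoints of $e_1$ the tangent to the arc and the tangent to the chord differ, so the natural blending you describe (normal displacement of the arc, cut off away from $e_1$, adjacent straight edges fixed pointwise) is \emph{not} $C^1$ at those corners; one must allow tangential sliding along the adjacent edges, and proving $C^{1,\epsilon}$ regularity of such a map up to the corners, uniformly, requires the corner angles to be bounded away from $0$ and $2\pi$ --- a fact which in the paper is a \emph{consequence} of this very lemma (Remark~\ref{rm:pluto}); (ii) any such $\Phi$ carries curvature (second-derivative) bounds of $\gamma$, so your constant would depend on more than the $\rho$ and $\|\gamma'\|_{L^{\infty}}$ claimed in the statement, and both the invertibility of the blend and the star-shapedness of the chord polygon $\hat{E}'$ (a hypothesis of the polygonal lemma that you never verify) hold only for $h$ below a threshold; (iii) most importantly, demanding $C^{1,\epsilon}$ regularity is a detour. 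For $0\le\epsilon<1/2$ a merely \emph{piecewise}-regular bi-Lipschitz map already transports $H^{1+\epsilon}$: in $\nabla(v\circ\Phi)=(D\Phi)^{T}\,(\nabla v)\circ\Phi$ the composition is controlled by bi-Lipschitz invariance of $H^{\epsilon}$, while multiplication by the piecewise-regular matrix $D\Phi$ is bounded on $H^{\epsilon}$ because characteristic functions of Lipschitz subdomains are $H^{\epsilon}$-multipliers precisely when $\epsilon<1/2$ (equivalently, by the fractional Hardy inequality). This is where the hypothesis $\epsilon<1/2$ enters the interior transfer, it is what lets the paper work with its piecewise-$W^{1,\infty}$ radial map whose derivative jumps across the sector lines through the vertices, and your proposal never invokes it; the same uniformity issue is also the unresolved crux of the Stein-extension alternative you sketch.
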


\begin{proof}
By assumption $\mathbf{(A0)}$ the element $E$ is a Lipschitz domain. For the trace theorem of Sobolev spaces on Lipschitz domains \cite{costabel}, the  trace operator 
is a bounded linear operator from $H^{1+\epsilon}(E)$ to $H^{1/2 + \epsilon}(\partial E)$, $0\leq \epsilon < \frac{1}{2}$. We need only to prove the uniformity (with respect to the mesh) of the  above continuity constant. 

We only sketch the proof, because it essentially follows the guidelines of Lemma 6.1 in \cite{2016stability}.
Up to a translation of the element $E$, we may assume that the ball $B_E$  is centered in the origin of
the coordinate axes. Let $\Psi \colon [0, 2 \pi) \to [\rho h_E, h_E]$ the map describing the boundary $\partial E$ of $E$ with respect to  the angle in radial coordinates, i.e.
\[
(x, \, y) =  (\Psi(\theta) \cos \theta, \, \Psi(\theta) \sin \theta)  \in \partial E  \qquad \text{for all $\theta \in [0, 2 \pi)$.}
\]
As for the straight case,  the regularity of the curve $\mathbf{(A0)}$ and the star shaped assumption $\mathbf{(A1)}$ implies that $\Psi \in W^{1, \infty}([0, 2 \pi))$ uniformly with respect to $E \in \Omega_h$. As a matter of fact, $\Psi$ is clearly continuous piecewise differentiable, and moreover its derivative is uniformly bounded. Indeed if one assumes that the derivative of $\Psi$ has a blow up, then the element $E$ cannot be star shaped with respect to a ball of radius comparable with the diameter of the element in contrast with the assumption $\mathbf{(A1)}$  (see Figure \ref{fig:traccia}).
\begin{figure}[!h]
\center{
\hspace*{-2.5cm}%
\begin{overpic}[scale= 0.40]{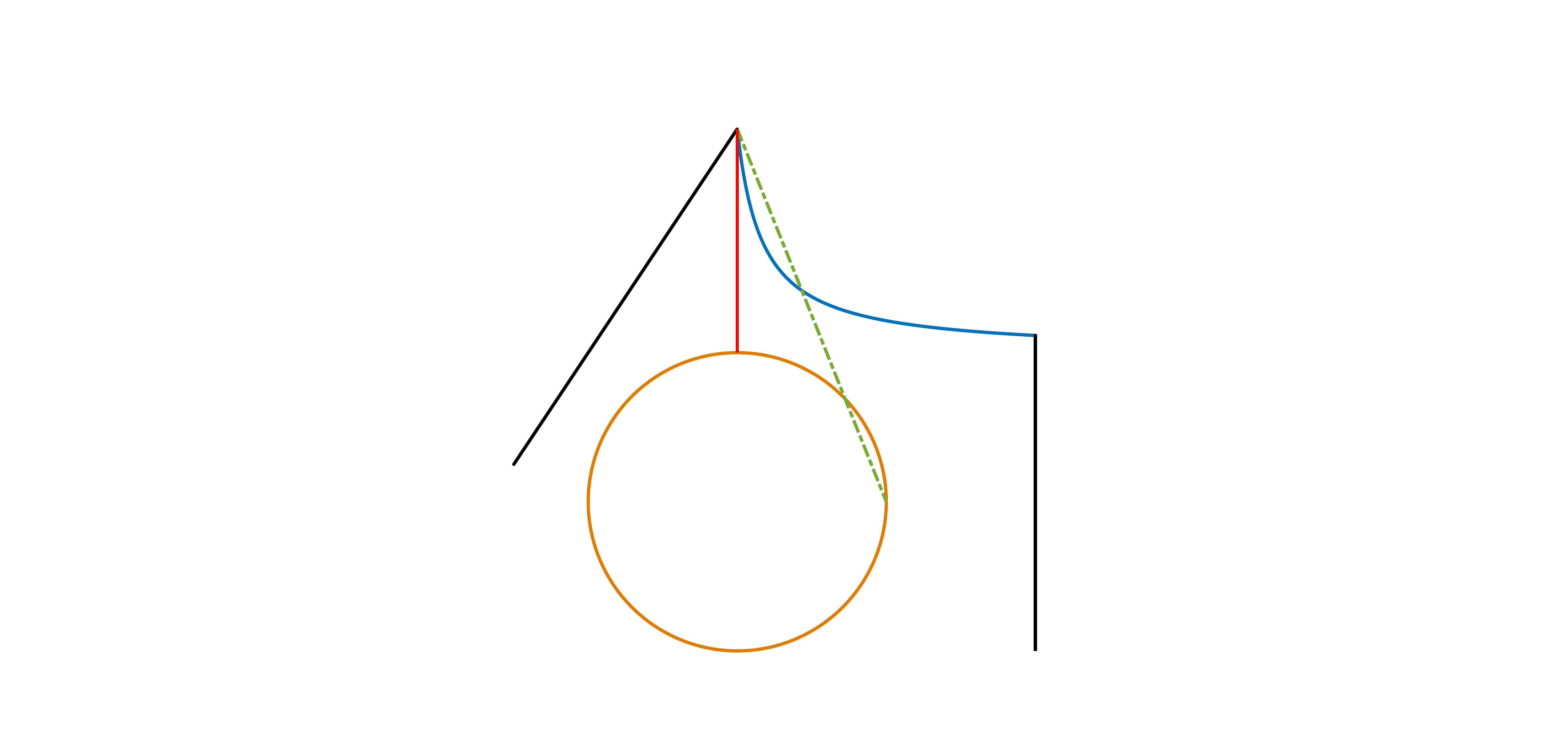} 
\put (46,39) {{$\Psi(\theta)$}}
\put (60,20) {$E$}
\put (55,27) {$e$}
\put (45.5,24.5) {$\theta$}
\put (46,13) {$B_E$}
\end{overpic}
\vspace*{-1cm}
\caption{Example of element not satisfying the assumption  $\mathbf{(A1)}$. The derivative of the function $\Psi$ being not bounded is in contrast with the element $E$ being star shaped with respect to a ball.}
\label{fig:traccia}
}
\end{figure}
Let now $F \colon \bar{B}_E \to \bar{E}$ be the radial mapping 
\[
F(\bar{x}, \, \bar{y}) = \frac{r}{\rho h_E} ( \Psi(\theta)\cos \theta, \, \Psi(\theta) \sin \theta ) \in \bar{E} \qquad \text{for all $ (\bar{x}, \, \bar{y}) = (r \cos \theta, \, r \sin \theta) \in  \bar{B}_E$.}
\]
As a consequence of the above observation, assumptions $\mathbf{(A0)}$ and $\mathbf{(A1)}$ imply that
the piecewise regular map $F \in W^{1, \infty}(\bar{B}_E)$ and $F^{-1} \in W^{1, \infty}(\bar{E})$, uniformly in the element $E$.
The thesis now follows from standard ``pull-back'' and ``push-forward'' arguments combined
with the analogous result on the disk $B_E$ (see for instance \cite{brennerscott, ding1996proof}).%
\end{proof}

\begin{remark}
\label{rm:pluto}
A simple consequence of the proof of Lemma \ref{lm:tracethm} is also that all (internal) angles of any element $E \in \Omega_h$ are uniformly separated from $0$ and $2  \pi$.
\end{remark}

Let us recall a classical approximation result for  polynomials on star-shaped domains, see for instance \cite{brennerscott}.

\begin{lemma}
\label{lm:scott}
Let $E \in \Omega_h$, and let two real non-negative numbers $r$, $s$ with $r \leq s \leq k+1$.
Then for all $v \in H^s(E)$, there exists a  polynomial function $v_{\pi} \in \Pk_k(E)$,  such that
\begin{equation}
\label{eq:scott}
| v - v_{\pi} |_{r, E} \leq C \, h_E^{s-r}| v|_{s, E},
\end{equation}
with $C$ depending only on $k$ and the shape regularity constant $\rho$ (cf. assumption $\mathbf{(A1)}$).
\end{lemma}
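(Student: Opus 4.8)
The plan is to take $v_\pi$ to be the averaged Taylor polynomial of $v$ of degree $k$, averaged over the ball $B_E$ provided by assumption $\mathbf{(A1)}$, and then to invoke the Bramble--Hilbert machinery in the Dupont--Scott form (see \cite{brennerscott}, Chapter 4). Assumption $\mathbf{(A1)}$ is precisely the hypothesis under which this construction is available: the averaged Taylor polynomial is well defined exactly when $E$ is star-shaped with respect to $B_E$, and the associated error bounds depend on the geometry of $E$ only through its chunkiness parameter, i.e. the ratio of $h_E$ to the radius of the largest ball with respect to which $E$ is star-shaped. By $\mathbf{(A1)}$ this parameter is bounded by $1/\rho$, uniformly over all elements of all meshes in the sequence $\{\Omega_h\}_h$.

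First I would establish the bound for integer $s$; for such $s$ every integer $r \le s$ is covered by the same polynomial $v_\pi$. To isolate the $h_E$-dependence from the shape dependence I would rescale to a reference configuration $\hat{E} := h_E^{-1} E$, which has unit diameter and, by $\mathbf{(A1)}$, is star-shaped with respect to a ball of radius $\ge \rho$, hence has chunkiness $\le 1/\rho$. Writing $x = h_E \hat{x}$ and $w(x) = \hat{w}(\hat{x})$, the usual change of variables in $\R^2$ gives $|w|_{m, E} = h_E^{1-m} |\hat{w}|_{m, \hat{E}}$ for every integer $m$ (the factor $h_E$ arising from the square root of the area element). The Dupont--Scott estimate on $\hat{E}$ furnishes $\hat{p} \in \Pk_k(\hat{E})$ with $|\hat{v} - \hat{p}|_{r, \hat{E}} \le \bar{C}\,|\hat{v}|_{s, \hat{E}}$, where $\bar{C}$ depends only on $k$, $r$, $s$ and the chunkiness bound $1/\rho$. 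Taking $v_\pi$ to be the pull-back of $\hat{p}$ and reinserting the scaling identities, the powers $h_E^{1-r}$ and $h_E^{s-1}$ combine to the desired exponent:
\begin{equation*}
|v - v_\pi|_{r, E} = h_E^{1-r}\,|\hat{v} - \hat{p}|_{r, \hat{E}} \le \bar{C}\, h_E^{1-r}\,|\hat{v}|_{s, \hat{E}} = \bar{C}\, h_E^{s-r}\,|v|_{s, E},
\end{equation*}
so that \eqref{eq:scott} holds with $C = \bar{C}$ independent of $h_E$.

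It then remains to drop the integrality of $r$ and $s$. Writing a fractional index as $n + \sigma$ with $n \in \N$ and $0 < \sigma < 1$, the estimates at the two bracketing integer values transfer to the intermediate index by the real interpolation of Sobolev spaces, exactly as in the closing step of Lemma \ref{lm:edgeinterpolation} (cf. Theorem 14.2.3 and Proposition 14.1.5 in \cite{brennerscott}); the geometric constant is inherited unchanged since it is uniform in the integer endpoints. The one genuinely delicate point, which I regard as the crux, is the uniformity of the Bramble--Hilbert constant: one must be sure that the Dupont--Scott bound on $\hat{E}$ sees the domain only through its chunkiness parameter, so that $\mathbf{(A1)}$ alone makes $C$ independent of the mesh. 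Granting the Dupont--Scott theory this is automatic, but it is the step on which the entire uniform-in-$h$ character of \eqref{eq:scott} rests.
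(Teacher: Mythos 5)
Your proposal is correct: the paper does not prove this lemma at all, but simply recalls it as a classical result with a citation to \cite{brennerscott}, and your argument (averaged Taylor polynomial over $B_E$, Dupont--Scott bound depending only on the chunkiness parameter $\le 1/\rho$, scaling $|w|_{m,E} = h_E^{1-m}|\hat{w}|_{m,\hat{E}}$ in $\R^2$, and real interpolation for fractional indices) is precisely the standard proof found in that reference. In particular you correctly identify the uniformity of the Bramble--Hilbert constant under $\mathbf{(A1)}$ as the point that makes the estimate uniform in $h$, which is exactly why the paper imposes that assumption.
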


\begin{theorem}
\label{thm:interpolante}
Let any real number $\epsilon \in [0, 1/2)$ and
$v \in H^{s}(\Omega) \cap V$, with $1+\epsilon  < \frac{3}{2} \leq s \leq k+1$. Then there exists $v_{\rm I} \in V_h$ such that
\[
\sum_{E \in \Omega_h} | v - v_{\rm I} |_{1 + \epsilon, E}  \leq C \, h^{s-1- \epsilon} \, \|v\|_{s},
\]
where the constant $C$ depends on the degree $k$, the shape regularity constant $\rho$ and the 
parametrization $\gamma$.
\end{theorem}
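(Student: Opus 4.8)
The plan is to take $v_{\rm I}$ to be the canonical degree-of-freedom interpolant and to reduce the element-wise estimate to the edge interpolation of Lemma~\ref{lm:edgeinterpolation}, the trace inequality of Lemma~\ref{lm:tracethm} and the polynomial approximation of Lemma~\ref{lm:scott}, the only genuinely new ingredient being an integration-by-parts device that neutralises the fact that $\Pk_k(E)\not\subset V_h^E$ on the curved element. First I would define, on each $E\in\Omega_h$, the function $v_{\rm I}|_E\in V_h^E$ as the unique element with $\boldsymbol{D}(v_{\rm I})=\boldsymbol{D}(v)$; this is well posed by the unisolvence in Proposition~\ref{prp:dofs}, and the point-value degrees of freedom are meaningful because, since $s\geq 3/2$, the trace of $v$ on each edge lies in $H^{s-1/2}\subset C^0$ (Lemma~\ref{lm:tracethm}). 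Since the vertex and edge degrees of freedom are shared by neighbouring elements, the resulting $v_{\rm I}$ is continuous across $\Omega_h$, and as $v\in H^1_0(\Omega)$ its boundary degrees of freedom vanish on $\partial\Omega$; hence $v_{\rm I}\in V_h$. It then suffices to bound $|v-v_{\rm I}|_{1+\epsilon,E}$ on each $E$ and to sum the local contributions, using $\sum_E\|v\|_{s,E}^2\lesssim\|v\|_s^2$.

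I would treat the integer case $\epsilon=0$ first. Let $v_\pi\in\Pk_k(E)$ be the polynomial of Lemma~\ref{lm:scott}. Writing
\[
|v-v_{\rm I}|_{1,E}^2=\int_E\nabla(v-v_{\rm I})\cdot\nabla(v-v_\pi)\,{\rm d}E+\int_E\nabla(v-v_{\rm I})\cdot\nabla(v_\pi-v_{\rm I})\,{\rm d}E,
\]
I would integrate by parts in the second term. Since $-\Delta v_{\rm I}\in\Pk_{k-2}(E)$ by definition of $V_h^E$ and $\Delta v_\pi\in\Pk_{k-2}(E)$, the Laplacian $\Delta(v_\pi-v_{\rm I})$ lies in $\Pk_{k-2}(E)$; and because the internal moment degrees of freedom $\boldsymbol{D^o}$ of $v-v_{\rm I}$ vanish, the volume integral $\int_E(v-v_{\rm I})\,\Delta(v_\pi-v_{\rm I})\,{\rm d}E$ is zero. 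This cancellation is the key step that replaces the usual inclusion $\Pk_k(E)\subset V_h^E$ and leaves only a boundary term on $\partial E$. Combining Cauchy--Schwarz on the first term with Lemma~\ref{lm:scott}, namely $|v-v_\pi|_{1,E}\lesssim h_E^{s-1}|v|_{s,E}$, the problem reduces to estimating $\bigl|\int_{\partial E}(v-v_{\rm I})\,\partial_n(v_\pi-v_{\rm I})\,{\rm d}S\bigr|$.

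For this boundary term I would use the $H^{1/2}$--$H^{-1/2}$ duality $\|v-v_{\rm I}\|_{1/2,\partial E}\,\|\partial_n(v_\pi-v_{\rm I})\|_{-1/2,\partial E}$. The first factor is controlled edge-by-edge by Lemma~\ref{lm:edgeinterpolation} (on the curved edge via $\widetilde{\Pk_k}(e_1)$ and the fixed parametrisation $\gamma$) together with the trace inequality Lemma~\ref{lm:tracethm}, giving $\|v-v_{\rm I}\|_{1/2,\partial E}\lesssim h_E^{s-1}\|v\|_{s,E}$; the normal-trace factor is bounded by $|v_\pi-v_{\rm I}|_{1,E}+h_E\|\Delta(v_\pi-v_{\rm I})\|_{0,E}\lesssim|v-v_\pi|_{1,E}+|v-v_{\rm I}|_{1,E}+\dots$, where the polynomial Laplacian term is handled by an inverse estimate on $\Pk_{k-2}(E)$. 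A Young's inequality then absorbs the $|v-v_{\rm I}|_{1,E}$ reappearing on the right-hand side and yields $|v-v_{\rm I}|_{1,E}\lesssim h_E^{s-1}|v|_{s,E}$ (up to the lower-order edge contributions). For the fractional orders $\epsilon\in(0,1/2)$ I would not repeat the energy argument but instead interpolate the error operator $v\mapsto v-v_{\rm I}$ between Sobolev spaces, exactly as in the non-integer part of the proof of Lemma~\ref{lm:edgeinterpolation} (Theorem~14.2.3 in \cite{brennerscott}), starting from the integer estimate just obtained and the matching one at the next integer order.

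I expect the main obstacle to be precisely this boundary term of the integer case: one must make the trace inequality, the $H^{-1/2}$ normal-trace bound and the edge interpolation constants \emph{uniform} over the curved, shape-regular elements. Uniformity of the trace constant is provided by Lemma~\ref{lm:tracethm} through the bi-Lipschitz radial map $F$, and the edge contributions carry the correct powers of $h_E$ only because, under assumptions $\mathbf{(A0)}$--$\mathbf{(A2)}$ and for the fixed parametrisation, $\ell_e$ and the length of $I_e$ are comparable to $h_E$. Keeping track of these scalings, and of the dependence on $\gamma$ hidden in the constant of Lemma~\ref{lm:edgeinterpolation}, is the delicate bookkeeping that produces the final constant, and it is where the curvature of the edge genuinely enters the analysis.
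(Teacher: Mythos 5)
Your choice of the canonical DoF interpolant together with the integration-by-parts cancellation (vanishing internal moments against $\Delta(v_\pi - v_{\rm I}) \in \Pk_{k-2}(E)$) is a genuinely different route from the paper's, and for $\epsilon = 0$ it is plausible; the paper never uses the canonical interpolant, but defines $v_{\rm I}$ element-wise as the solution of the auxiliary problem \eqref{eq:pbinter}, i.e.\ $\Delta v_{\rm I} = \Delta v_\pi$ in $E$ with boundary values given by the edge interpolant \eqref{eq:boundaryinterpolant}, so that $v_{\rm I} - v_\pi$ is harmonic. The reason for that choice is exactly where your argument breaks: the fractional case. You propose to reach $\epsilon \in (0,1/2)$ by interpolating the operator $v \mapsto v - v_{\rm I}$ between target order $1$ and ``the next integer order'' $2$. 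That upper endpoint does not exist: for $s \in [3/2,2)$ the quantity $|v|_{2,E}$ need not be finite, and independently of $s$ a virtual function $v_{\rm I} \in V_h^E$ is in general \emph{not} in $H^2(E)$ (it solves a Dirichlet Laplace problem with merely $C^0$, piecewise smooth boundary data on a possibly non-convex polygon, hence has corner regularity of order $1+\pi/\omega < 2$ at a re-entrant corner of opening $\omega > \pi$). Since the $\epsilon > 0$ case of the theorem is precisely what the stability and convergence analysis uses, this gap is not cosmetic. The paper avoids it by obtaining all orders $\epsilon \in [0,1/2)$ simultaneously from elliptic regularity on star-shaped Lipschitz domains: $|v_\pi - v_{\rm I}|_{1+\epsilon,E} \lesssim |v_\pi - v_{\rm I}|_{1/2+\epsilon,\partial E}$ (continuous dependence on boundary data), after which only boundary estimates remain.

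Second, your claimed element-wise bound $\|v - v_{\rm I}\|_{1/2,\partial E} \lesssim h_E^{s-1}\|v\|_{s,E}$, with the element norm on the right, is not available on curved elements, and this affects your $\epsilon = 0$ argument as well. Two distinct issues are glossed over. (a) Fractional norms on $\partial E$ are not subadditive over edges; the paper's fix is to write $(v - v_{\rm I})|_{\partial E} = \sum_e \zeta_e$ with $\zeta_e$ the single-edge error extended by zero (legitimate because the error vanishes at the vertexes), and to estimate $|\zeta_e|_{1/2+\epsilon,\partial E} \leq \|\zeta_e\|_{0,\partial E}^{1/2-\epsilon}\,\|\zeta_e\|_{1,\partial E}^{1/2+\epsilon}$ by real interpolation plus a scaled Poincar\'e inequality; this device you could import. (b) More seriously, on the curved edge Lemma \ref{lm:edgeinterpolation} leaves you with $\|v\|_{s-1/2,e}$, and converting this edge norm into $\|v\|_{s,E}$ requires a trace inequality of order $s - 1/2$, which may be as large as $k + 1/2$; Lemma \ref{lm:tracethm} only covers orders $1/2 + \epsilon < 1$, and no uniform element-wise high-order trace estimate on a curved Lipschitz element is at hand (for straight edges it is, via the flat trace theorem, which is what the paper uses in \eqref{S-case}). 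The paper deliberately does not convert this term element-wise: it keeps $\|v\|_{s-1/2,\partial E \cap \Gamma}$, sums over the elements, and only then bounds $\sum_i \|v\|_{s-1/2,\Gamma_i} \lesssim \|v\|_{s}$ globally, via the Stein extension operator and the trace theorem on smooth domains $\mathcal{C}_i$ with $\Gamma_i \subset \partial \mathcal{C}_i$, cf.\ \eqref{eq:interp9}. Without these two repairs your element-wise reduction does not close.
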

\begin{proof} 
For any element $E \in \Omega_h$ 
Lemma \ref{lm:scott} yields a polynomial $v_{\pi} \in \Pk_k(E)$ such that
\begin{equation}
\label{eq:inter_new}
|v - v_{\pi}|_{1+ \epsilon, E} \leq  C \, h_E^{s-1- \epsilon}|v|_{s, E} \,.
\end{equation}
Let us define the following elliptic problem 
\begin{equation}
\label{eq:pbinter}
\left\{
\begin{aligned}
& \text{Find $v_{\rm I} \in H^1(E)$, such that} \\
& \Delta v_{\rm I}  =  \, \Delta v_{\pi} \qquad & \text{in $E$,} \\
&  v_{\rm I} = \text{interpolant of $v$ in the sense of \eqref{eq:boundaryinterpolant}} \qquad & \text{ on $\partial E$.}
\end{aligned}
\right.
\end{equation}
Note that $v_{\rm I} \in V_h^E$ and the difference $(v_{\rm I} - v_{\pi})$ satisfies
\begin{equation}
\label{eq:pbinterdiff}
\left\{
\begin{aligned}
& \Delta (v_{\rm I}  - v_{\pi})= 0 \qquad & \text{in $E$,} \\
&  (v_{\rm I} - v_{\pi})  \qquad & \text{prescribed on $\partial E$.}
\end{aligned}
\right.
\end{equation}
Due to assumptions $\boldsymbol{(A0)}$-$\boldsymbol{(A1)}$ and regularity results 
on star-shaped Lipschitz domains \cite{Agmon:1965,Grisvard:1992},
the solution of the elliptic problem \eqref{eq:pbinterdiff} belongs to the space $H^{1+\epsilon}(E)$ with continuous dependence from the boundary data.  
Therefore, also using inequalities  \eqref{eq:trace1} and \eqref{eq:inter_new}, we obtain
\begin{equation}
\label{eq:interp1}
\begin{split}
|v_{\pi} - v_{\rm I} |_{1+ \epsilon, E}
& \lesssim |v_{\pi} - v_{\rm I} |_{1/2+ \epsilon, \partial E}
  \lesssim |v_{\pi} - v |_{1/2+ \epsilon, \partial E} + |v - v_{\rm I} |_{1/2+ \epsilon, \partial E} \\  
& \lesssim |v_{\pi} - v |_{1+ \epsilon, E} + |v - v_{\rm I} |_{1/2+ \epsilon, \partial E} \\
& \lesssim h^{s-1-\epsilon} \, |v|_{s, E} + |v - v_{\rm I} |_{1/2+ \epsilon, \partial E} \,.
\end{split}
\end{equation}
The uniformity (with respect to the element $E$) of the fist bound in \eqref{eq:interp1} follows from the fact that both the star-shapedness constant and the Lipschitz constant of all elements $E \in \Omega_h$ are uniformly bounded due to  $\boldsymbol{(A0)}$ and $\boldsymbol{(A1)}$, see also Remark \ref{rm:pluto}.
The second factor in the right hand side of \eqref{eq:interp1} is estimated as follows.  
For all edges $e \subset \partial E$, let $\zeta_e$ be the function defined on $\partial E$ by
\begin{equation}
\label{eq:interp6}
\zeta_e :=  
\left\{
\begin{aligned}
& v - v_{\rm I} \qquad & \text{in $e$,} \\
& 0  \qquad & \text{in $\partial E \setminus e$.}
\end{aligned}
\right.
\end{equation}
It is straightforward to see that 
\begin{equation}
\label{eq:interp5}
(v - v_{\rm I} )_{|_{\partial E}} = \sum_{e \in \partial E} \zeta_e
\end{equation}
and that, being $(v - v_{\rm I})$ zero at all vertexes of $E$,  for all $e \in \partial E$ it holds $\zeta_e \in H^1(\partial E)$. 

Now, using the characterization of fractional Sobolev space $H^{1/2+ \epsilon}( \partial E)$ as the real interpolation between $L^2(\partial E)$ and $H^1(\partial E)$, by a standard result concerning the norm of  real interpolation  spaces (Proposition 2.3 of \cite{lions-magenes}) it holds that
\begin{equation}
\label{eq:pluto}
|\zeta_e|_{1/2+\epsilon, \partial E} \leq  \, \|\zeta_e\|_{0, \partial E}^{1/2- \epsilon} \, \|\zeta_e\|_{1, \partial E}^{1/2+\epsilon}.
\end{equation}
Now, applying the 1D Poincar{\'e} inequality $\|\zeta_e\|_{0, \partial E} \lesssim \, h_E \, |\zeta_e |_{1, \partial E}$, we get
\begin{equation}\label{eq:L:X}
|\zeta_e|_{1/2+\epsilon, \partial E} \lesssim \, h_E^{1/2-\epsilon} \,  \|\zeta_e\|_{1, \partial E}^{1/2-\epsilon} \, \|\zeta_e\|_{1, \partial E}^{1/2+ \epsilon} 
\\
\lesssim \, h_E^{1/2- \epsilon} \,  \|\zeta_e\|_{1, e} 
\lesssim \, h_E^{1/2 - \epsilon} \,  \|v - v_{\rm I}\|_{1, e} . \\
\end{equation}
From the above bound \eqref{eq:L:X} we now distinguish two cases. If the edge $e$ is straight then standard polynomial approximation estimates in 1D yield 
$$
|\zeta_e|_{1/2 + \epsilon, \partial E} \lesssim \, h_E^{s-1 -\epsilon} \, |v |_{s - 1/2, e}. 
$$
By assumption $\boldsymbol{(A0)}$ the boundary $\partial E$ is Lipschitz. Therefore the above bound and a standard trace inequality (which we note is here applied to a single straight edge) yields
\begin{equation}\label{S-case}
|\zeta_e|_{1/2 + \epsilon, \partial E} \lesssim \, h_E^{s-1 - \epsilon} \, |v|_{s, E} .
\end{equation}
Referring back to bound \eqref{eq:L:X}, if the edge $e$ is curved (in other words $e \subseteq \Gamma$), using Lemma \ref{lm:edgeinterpolation} we obtain
\[
\begin{split}
|\zeta_e|_{1/2+ \epsilon, \partial E} & \lesssim \, h_E^{1/2 - \epsilon} \,  h_E^{s-3/2} \,\|v\|_{s - 1/2, e} \lesssim \, h_E^{s-1 - \epsilon} \,\|v \|_{s - 1/2, e} .
\end{split}
\]
Therefore \eqref{eq:interp5} and the bounds above give
\begin{equation}
\label{eq:interp8}
|v - v_{\rm I}|_{1/2 + \epsilon, \partial E} \leq 
\sum_{e \in \subset \partial E}  \,|\zeta_e |_{1/2+\epsilon, e} \lesssim
 \, h_E^{s-1- \epsilon} \big( |v|_{s, E} + \|v \|_{s - 1/2, \partial E \cap \Gamma} \big).
\end{equation}
First combining the previous inequality with \eqref{eq:interp1}, then summing over all the elements $E \in \Omega_h$, yields
$$
\sum_{E \in \Omega_h}| v - v_{\rm I} |_{1+\epsilon, E} \lesssim \, h^{s-1-\epsilon} \, \big( |v|_{s} + \sum_{i=1}^ N \|v \|_{s - 1/2, \Gamma_i}  \big) .
$$
Now the thesis follows from standard (piecewise) trace inequalities on Lipschitz domains with piecewise regular boundary.
More in details, by assumption $\boldsymbol{(A0)}$, the boundary $\partial \Omega$ is Lipschitz.
Therefore applying the Extension Theorem for a domain with Lipschitz boundary, (see for instance Theorem $5'$ in Chapter VI of \cite{stein}) there exists an extension operator
\begin{equation}
\begin{gathered}
\label{eq:extension}
\mathcal{E} \colon H^{s}(\Omega) \to H^s(\R^2) \qquad \text{s.t} \\
\mathcal{E}(u)_{|\Omega} = u 
\qquad \text{and} \qquad
\|\mathcal{E}(u)\|_{s, \R^n} \leq \kappa \, \|u\|_{s} 
\quad \text{for all $u \in H^s(\Omega)$}
\end{gathered}
\end{equation}
where the constant $\kappa$ depends on $s$.
For any curve $\Gamma_i \subset \partial \Omega$, let $\mathcal{C}_i$ be a domain in $\R^2$, with boundary $\partial \mathcal{C}_i \in C^{s-1, 1}$, such that $\Gamma_i \subset \partial \mathcal{C}_i$. Therefore applying the trace theorem for smooth domains \cite{lions-magenes}, and by \eqref{eq:extension},  we get
\begin{equation}
\label{eq:interp9}
\begin{split}
\|v\|_{s-1/2, \Gamma_i} &= \|\mathcal{E}(u)\|_{s-1/2, \Gamma_i}  \leq
\|\mathcal{E}(u)\|_{s-1/2, \partial \mathcal{C}_i} \leq 
\|\mathcal{E}(u)\|_{s, \mathcal{C}_i} \leq
\|\mathcal{E}(u)\|_{s, \R^n} \leq
\kappa \, \|u\|_{s}.
\end{split}
\end{equation}
\end{proof} 

\begin{remark}
\label{rm:piddu}
The condition $s \geq \frac{3}{2}$ could be reduced to $s > 1 + \epsilon$ by a finer (but slightly more complicated) choice of the real interpolation space in \eqref{eq:pluto}; we here prefer to be slightly less general in favour of readability.
\end{remark}

\subsection{Stability analysis}
\label{sub:stability}

In the present section we address the stability properties of the bilinear form introduced in \eqref{eq:a_h^E def}. 
Although we analyse the standard choice of the stabilizing bilinear form defined in \eqref{eq:s^e}, the same results hold for different choices of $\mathcal{S}^E(\cdot, \, \cdot)$ in \eqref{eq:a_h^E def} (see for instance \cite{2016stability}).
%
%
%
%

The next lemma extends to curved elements the inverse inequalities of Lemma 7.1 in \cite{vacca2017} and Lemma 6.3 in \cite{2016stability}, see also
 \cite{brenner2017, chen2017}. The proof is identical to the straight case and is omitted.
\begin{lemma}
\label{lm:inverse estimate}
Let $E \in \Omega_h$. Under the assumptions $\mathbf{(A0)}$, $\mathbf{(A1)}$, $\mathbf{(A2)}$, for all $v_h \in V_h^E$  the following inverse inequalities hold
\begin{align}
\label{eq:inv}
|v_h|_{1,E}  \leq c_{inv} \, h_E^{-1} \, \|v_h\|_{0,E} 
\\
\label{eq:inv2}
\|\Delta v_h \|_{0,E}  \leq c_{inv} \, h_E^{-1} \, |v_h|_{1,E} 
\end{align}
where the constant $c_{inv}$ depends only on the shape regularity constant $\rho$.
\end{lemma}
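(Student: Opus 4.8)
The plan is to establish the two inverse inequalities \eqref{eq:inv} and \eqref{eq:inv2} by transporting the classical polynomial inverse estimates from a reference configuration to the physical curved element $E$, exploiting the uniform shape-regularity provided by assumptions $\mathbf{(A0)}$, $\mathbf{(A1)}$ and $\mathbf{(A2)}$. The key structural fact, already proved in Lemma \ref{lm:tracethm}, is that the radial map $F \colon \bar{B}_E \to \bar{E}$ and its inverse both lie in $W^{1,\infty}$ uniformly in $E \in \Omega_h$, and a simple scaling shows that their relevant derivatives scale like $h_E$ and $h_E^{-1}$ respectively. This lets one pass between integrals on $E$ and on a fixed-size disk while controlling all constants only through $\rho$ and $\|\gamma'\|_{L^\infty}$.

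For the first inequality \eqref{eq:inv}, I would proceed exactly as in the straight case treated in Lemma 6.3 of \cite{2016stability} and Lemma 7.1 of \cite{vacca2017}. The function $v_h \in V_h^E$ is harmonic up to a polynomial right-hand side, i.e. $-\Delta v_h = p \in \Pk_{k-2}(E)$, so one decomposes $v_h = v_h^0 + v_h^p$ into a harmonic part and a particular polynomial solution. On the polynomial part the standard inverse estimate $|q|_{1,E} \leq C\,h_E^{-1}\|q\|_{0,E}$ on star-shaped domains applies directly. On the harmonic part one uses the fact that a harmonic function attains the inverse estimate because its gradient can be controlled by boundary data via the trace and the uniform conformal/radial structure of $E$; the uniformity of the constant follows from the uniform bounds on $F$ and $F^{-1}$ established in Lemma \ref{lm:tracethm}, together with Remark \ref{rm:pluto} on uniformly bounded angles. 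The second inequality \eqref{eq:inv2} is nearly immediate once \eqref{eq:inv} is available, since $-\Delta v_h$ is a polynomial of degree $k-2$; applying a polynomial inverse estimate to $\Delta v_h$ gives $\|\Delta v_h\|_{0,E} \leq C\,h_E^{-1}\|\Delta v_h\|_{-1,E}$-type control, which is then bounded by $|v_h|_{1,E}$ using the harmonicity structure and integration by parts against the polynomial.

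The main obstacle, and the reason the authors defer to the straight-case references while asserting ``the proof is identical,'' is verifying that none of the constants degenerate as the edge becomes curved. Concretely, the delicate point is that the inverse inequality for harmonic functions relies on mapping $E$ to a reference domain of unit size and invoking elliptic regularity there; one must confirm that the push-forward through $F$ does not spoil the $h_E^{-1}$ scaling because $F$ mixes the curved-boundary geometry into the Laplacian. Since $F$ and $F^{-1}$ are uniformly $W^{1,\infty}$ and the curved edge lies on a single fixed smooth curve $\Gamma_i$ (whose curvature is bounded independently of $h_E$, as $\gamma$ is fixed once and for all), the transported operator remains uniformly elliptic with constants depending only on $\rho$ and $\|\gamma'\|_{L^\infty}$. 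Hence the curved case genuinely reduces to the straight one, and the explicit calculations—being standard scaling and pull-back arguments identical to those in \cite{2016stability}—may legitimately be omitted.
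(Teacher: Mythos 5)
Your treatment of \eqref{eq:inv} contains a genuine gap: the assertion that ``a harmonic function attains the inverse estimate because its gradient can be controlled by boundary data via the trace and the uniform conformal/radial structure of $E$'' is false as stated. Harmonic functions do \emph{not} satisfy $|v|_{1,E}\le C h_E^{-1}\|v\|_{0,E}$ with a constant independent of $v$: on the unit disk the harmonic functions $v_n=r^n\cos(n\theta)$ satisfy $|v_n|_{1}/\|v_n\|_{0}\sim n$, so no such bound can hold on the (infinite-dimensional) space of harmonic functions, however regular the domain. What makes the estimate true for the harmonic part of a virtual function is that its boundary trace lies in a \emph{finite-dimensional} space: piecewise polynomials on the straight edges and mapped polynomials $\widetilde{\Pk_k}(e)$ on the curved edge. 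The straight-case proofs the paper defers to \cite{2016stability,chen2017,brenner2017,vacca2017} exploit exactly this: one bounds the harmonic part by Dirichlet-energy minimization, $|v_1|_{1,E}\lesssim |v_h|_{1/2,\partial E}$, then applies a one-dimensional inverse inequality \emph{on the trace space}, $|v_h|_{1/2,\partial E}\lesssim h_E^{-1/2}\|v_h\|_{0,\partial E}$, and closes with a scaled trace inequality of the form $\|v_h\|_{0,\partial E}^2\lesssim h_E^{-1}\|v_h\|_{0,E}^2+\|v_h\|_{0,E}\,|v_h|_{1,E}$ plus Young's inequality and absorption. None of these steps appears in your sketch, and the finite dimensionality of the trace space --- the only reason the inequality can hold at all --- is never invoked. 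For the same reason your guiding idea of ``transporting'' the inverse estimate through the radial map $F$ of Lemma \ref{lm:tracethm} cannot work for virtual functions: $V_h^E$ is not the pullback under $F$ of any fixed reference space (neither harmonicity nor the property $\Delta v\in\Pk_{k-2}(E)$ is preserved by composition with a bi-Lipschitz map), which is precisely why the VEM literature requires these ad hoc arguments in place of the classical FEM scaling proof. In the curved setting the correct ``extra'' verifications are that the bubble-type arguments need only star-shapedness ($\mathbf{(A1)}$), and that the 1D inverse estimate on $\widetilde{\Pk_k}(e)$ holds uniformly because $\zeta$, $\zeta^{-1}$ are fixed and uniformly smooth --- not the uniform ellipticity of an operator transported by $F$.

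Two secondary points. First, your argument for \eqref{eq:inv2} is correct in substance: for $p:=\Delta v_h\in\Pk_{k-2}(E)$ one has $\|p\|_{0,E}\lesssim h_E^{-1}\|p\|_{H^{-1}(E)}$ by a bubble-function argument using only $\mathbf{(A1)}$, and $\|p\|_{H^{-1}(E)}\le |v_h|_{1,E}$ by duality against $H^1_0(E)$ test functions (the integration by parts produces no boundary term). But this is logically independent of \eqref{eq:inv}, not ``nearly immediate once \eqref{eq:inv} is available''; indeed in the cited proofs the dependency runs the other way: \eqref{eq:inv2} is established first and then used to control the zero-trace component $v_2$ of the $H^1$-orthogonal splitting $v_h=v_1+v_2$ of \eqref{eq:v1v2a} (the same splitting the paper uses in Lemma \ref{lm:coercivity}), via $|v_2|_{1,E}^2=-\int_E v_2\,\Delta v_2\,{\rm d}E\le \|v_2\|_{0,E}\,\|\Delta v_h\|_{0,E}$. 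Second, that orthogonal splitting also avoids your non-orthogonal ``harmonic plus polynomial particular solution'' decomposition, for which you would additionally owe a bound of the $L^2$ norm of each piece by $\|v_h\|_{0,E}$ --- a bound that is not automatic and, if pursued via $\|\Delta v_h\|_{0,E}$, becomes circular with the very estimates being proved.
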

%
%

%
%
The following lemma states, in some sense, the continuity of the stabilizing form  $\mathcal{S}^E(\cdot, \, \cdot)$.
%
%

\begin{lemma}
\label{lm:continuity}
Let $E \in \Omega_h$. Under the assumptions $\mathbf{(A0)}$, $\mathbf{(A1)}$ and $\mathbf{(A2)}$ 
for any $\epsilon \in (0, 1/2)$
the following holds
\begin{gather}
\label{eq:continuityvp}
\mathcal{S}^E( \eta, \, \eta) \leq C\,  h_E^{-2} \, \|\eta \|_{0,E}^2 +
C \, h_E^{2 \epsilon} \, |\eta|^2_{1 + \epsilon,  E}   \qquad 
\text{for all $\eta \in V_h^E + \Pk_k(E)$,}
\end{gather}
where the constant $C$ depends on $k$, $\epsilon$, the shape regularity constant $\rho$ and $D\zeta$.
\end{lemma}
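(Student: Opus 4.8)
The plan is to bound separately the two groups of degrees of freedom entering the definition \eqref{eq:s^e} of $\mathcal{S}^E$: the internal moments $\boldsymbol{D}^{\boldsymbol{o}}_i$ and the boundary point--value operators $\boldsymbol{D}^{\boldsymbol{\partial}}_i$. For the internal moments I would simply use Cauchy--Schwarz: since $\|m_i\|_{L^{\infty}(E)}=1$ gives $\|m_i\|_{0,E}^2\leq |E|$, one obtains
\[
\big(\boldsymbol{D}^{\boldsymbol{o}}_i(\eta)\big)^2=\Big(\frac{1}{|E|}\int_E \eta\,m_i\,{\rm d}E\Big)^2\leq \frac{1}{|E|}\,\|\eta\|_{0,E}^2 .
\]
Recalling that assumption $\mathbf{(A1)}$ yields $|E|\geq |B_E|\gtrsim h_E^2$, and that there are only $k(k-1)/2$ such moments, the whole internal contribution is bounded by $C\,h_E^{-2}\,\|\eta\|_{0,E}^2$, i.e.\ by the first term on the right-hand side of \eqref{eq:continuityvp}.

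The core of the argument, and where I expect the main difficulty, is the treatment of the boundary point values. Each $\boldsymbol{D}^{\boldsymbol{\partial}}_i(\eta)$ is the value of $\eta$ at a point of some (possibly curved) edge $e\subset\partial E$, so I would first bound it by $\|\eta\|_{L^{\infty}(e)}$. The key tool is the one-dimensional Sobolev embedding $H^{1/2+\epsilon}(e)\hookrightarrow C^0(e)$, available precisely because $\epsilon>0$ (this is the reason the statement excludes $\epsilon=0$). Reading the norms through the arc-length parametrization as in \eqref{eq:normcurves} and tracking the scaling with the edge length $\ell_e$, the embedding gives
\[
\big(\boldsymbol{D}^{\boldsymbol{\partial}}_i(\eta)\big)^2\leq \|\eta\|_{L^{\infty}(e)}^2\leq C\big(\ell_e^{-1}\,\|\eta\|_{0,e}^2+\ell_e^{2\epsilon}\,|\eta|_{1/2+\epsilon,e}^2\big).
\]
Using the comparability $\ell_e\sim h_E$ (which for a curved edge brings in $\|\gamma'\|_{L^{\infty}}=\|D\zeta\|_{L^{\infty}}$, cf.\ Section \ref{sub:2.2}), summing over the boundedly many boundary nodes (their number is controlled by $k$ and $\rho$ through $N_E$), and bounding the edgewise sum of fractional seminorms by the global one on $\partial E$, I arrive at
\[
\sum_{i=1}^{k N_E}\big(\boldsymbol{D}^{\boldsymbol{\partial}}_i(\eta)\big)^2\leq C\big(h_E^{-1}\,\|\eta\|_{0,\partial E}^2+h_E^{2\epsilon}\,|\eta|_{1/2+\epsilon,\partial E}^2\big).
\]

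It then remains to convert these two boundary norms into the target volumetric quantities. For the fractional seminorm I would invoke the Trace Theorem (Lemma \ref{lm:tracethm}), which gives $|\eta|_{1/2+\epsilon,\partial E}\leq C\,|\eta|_{1+\epsilon,E}$ with a constant independent of $h_E$; this produces exactly the second term $C\,h_E^{2\epsilon}\,|\eta|_{1+\epsilon,E}^2$ of \eqref{eq:continuityvp}. For the boundary $L^2$ term I would apply a standard scaled trace inequality $\|\eta\|_{0,\partial E}^2\lesssim h_E^{-1}\|\eta\|_{0,E}^2+h_E\,|\eta|_{1,E}^2$, whose uniformity follows from the radial pull-back already constructed in the proof of Lemma \ref{lm:tracethm}; multiplying by $h_E^{-1}$, the first summand already gives $C\,h_E^{-2}\|\eta\|_{0,E}^2$, while the remaining $|\eta|_{1,E}^2$ is absorbed either by the inverse estimate \eqref{eq:inv} of Lemma \ref{lm:inverse estimate} (yielding again $C\,h_E^{-2}\|\eta\|_{0,E}^2$) or, if one prefers to avoid inverse estimates on $V_h^E+\Pk_k(E)$, by interpolating $H^1$ between $L^2$ and $H^{1+\epsilon}$ and using Young's inequality with weight a suitable power of $h_E$. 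Collecting the internal and boundary bounds yields \eqref{eq:continuityvp}, with $C$ depending only on $k$, $\epsilon$, $\rho$ and $D\zeta$, as claimed. The genuinely delicate point throughout is the curved edge: one must check that the point-value degrees of freedom $\boldsymbol{D^{\partial}_{\!I\!I\!I}}$, defined through $\gamma$, are controlled by the intrinsic (arc-length) Sobolev norm, so that the curvature enters the constants only through the fixed map $\zeta$.
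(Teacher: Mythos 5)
Your proof is correct, but it takes a genuinely different route from the paper's for the boundary degrees of freedom (the internal-moment bound is identical). The paper never leaves the two-dimensional setting: it bounds $\sum_i \boldsymbol{D}^{\boldsymbol{\partial}}_i(\eta)^2 \leq k N_E\,\|\eta\|^2_{L^\infty(\partial E)} \lesssim \|\eta\|^2_{L^\infty(E)}$ and then applies, in a single stroke, the scaled Sobolev embedding $H^{1+\epsilon}(E)\hookrightarrow L^\infty(E)$, which directly gives $\|\eta\|^2_{L^\infty(E)}\lesssim h_E^{-2}\|\eta\|^2_{0,E}+h_E^{2\epsilon}|\eta|^2_{1+\epsilon,E}$, the uniformity in $E$ being obtained by the same radial-mapping argument as in Lemma \ref{lm:tracethm}. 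You instead work on the boundary: the one-dimensional embedding $H^{1/2+\epsilon}(e)\hookrightarrow C^0(e)$ edge by edge, then Lemma \ref{lm:tracethm} to convert the fractional boundary seminorm into $|\eta|_{1+\epsilon,E}$, and a scaled $L^2$ trace inequality plus an absorption of $|\eta|^2_{1,E}$ for the remaining term. Your route is longer, but each step is standard and it reuses Lemma \ref{lm:tracethm}, where uniform constants have already been established, rather than re-running the mapping argument; it also makes explicit how the curved edges enter only through $\zeta$, which the paper leaves implicit. Two points deserve care. First, your ``Option 1'' for absorbing $|\eta|^2_{1,E}$ invokes the inverse estimate \eqref{eq:inv}, which is stated only for $v_h \in V_h^E$; since the splitting $\eta = v_h + p_k$ need not be $L^2$-stable, the estimate does not extend to $V_h^E + \Pk_k(E)$ by simply applying it to each summand, so your ``Option 2'' (interpolating $H^1$ between $L^2$ and $H^{1+\epsilon}$ with a Young inequality weighted by powers of $h_E$, equivalently the norm equivalence $|\hat\eta|^2_{1,\hat E}\lesssim \|\hat\eta\|^2_{0,\hat E}+|\hat\eta|^2_{1+\epsilon,\hat E}$ on the rescaled element) is the one to retain — you were right to flag this yourself. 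Second, the step $\sum_{e}|\eta|^2_{1/2+\epsilon,e}\lesssim |\eta|^2_{1/2+\epsilon,\partial E}$ holds in this direction for the Gagliardo (double-integral) form of the seminorm, but since the paper defines edge norms through the arc-length parametrization and the $\partial E$ norm by interpolation, one should note that the equivalence constants behave correctly under the $h_E$ scaling; this is routine but should be said.
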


\begin{proof}
By definition \eqref{eq:s^e} we get
\begin{equation}
\label{eq:3.1}
\begin{split}
\mathcal{S}^E (\eta, \, \eta) &= \sum_{i=1}^{k(k-1)/2} \boldsymbol{D}^{\boldsymbol{o}}_i(\eta)^2 +  \sum_{i=1}^{k N_E}  \boldsymbol{D}^{\boldsymbol{\partial}}_i(\eta)^2 
\qquad \text{for all $\eta \in V_h^E + \Pk_k(E)$.}
\end{split}
\end{equation}
For what concerns the first term in \eqref{eq:3.1}, recalling that the polynomial basis  $\{m_i\}_{i=1}^{k(k-1)/2}$ in $\boldsymbol{D^o}$ is such that $\|m_i\|_{L^{\infty}(E)} \leq 1$,  
Cauchy-Schwarz inequality yields 
\begin{equation}
\label{eq:3.2}
\begin{split}
\sum_{i=1}^{k(k-1)/2} \boldsymbol{D}^{\boldsymbol{o}}_i(\eta)^2 
&=\sum_{i=1}^{k(k-1)/2} \left( \frac{1}{|E|} \int_E \eta \, m_i \, {\rm d}E  \right)^2 
  \leq \sum_{i=1}^{k(k-1)/2} \frac{1}{|E|^2} \|\eta\|_{0,E}^2 \, \|m_i\|_{0,E}^2 \\
& \leq \sum_{i=1}^{k(k-1)/2} \frac{1}{|E|} \|\eta\|_{0,E}^2  
\lesssim  h_E^{-2} \, \|\eta\|_{0,E}^2 
\,.
\end{split}
\end{equation}
The estimate of the second term in \eqref{eq:3.1} easily follows by a scaled Sobolev's inequality, since the space $L^{\infty}(E)$ is continuously embedded in $H^{1 + \epsilon}(E)$:
\begin{equation}
\label{eq:3.3}
\begin{split}
\sum_{i=1}^{k N_E}  \boldsymbol{D}^{\boldsymbol{\partial}}_i(\eta)^2
& \leq k N_E \, \| \eta\|^2_{L^{\infty}(\partial E)}
  \lesssim   \|\eta\|^2_{L^{\infty}(E)}
  \lesssim  h_E^{-2} \, \|\eta\|_{0,E}^2 + h_E^{2 \epsilon} \, |\eta|_{1+\epsilon, E}^2  \,.
\end{split}
\end{equation}
The uniformity (with respect to the element $E$) of the last bound in \eqref{eq:3.3} can be derived by the same identical argument in Lemma \ref{lm:tracethm} (map to the ball, apply the result, map back).  
Bound \eqref{eq:continuityvp} follows collecting \eqref{eq:3.2} and \eqref{eq:3.3} in \eqref{eq:3.1}.

\end{proof}

The  next lemmas state the coercivity of the bilinear form $\mathcal{S}^E(\cdot, \, \cdot)$ with respect to  the $H^1(E)$ semi-norm. 
We start by noting that any function $v_h \in V_h^E$, can be decomposed as
\begin{equation}
\label{eq:v1v2}
v_h = v_1 + v_2
\end{equation}
where $v_1$, $v_2 \in V_h^E$ are defined by
\begin{equation}
\label{eq:v1v2a}
\left \{
\begin{aligned}
&\Delta v_1 = 0 \, & \text{in $E$,}  \\
& {v_1}_{|\partial E} = {v_h}_{|\partial E} \, & \text{on $\partial E$,}
\end{aligned} 
\right.
\qquad
\text{and}
\qquad
\left \{
\begin{aligned}
&\Delta v_2 = \Delta v_h \, & \text{in $E$,} \\
& {v_2}_{|\partial E} = 0 \, & \text{on $\partial E$.}
\end{aligned}
\right.
\end{equation}
Moreover the decomposition is $H^1$-orthogonal, i.e.
\begin{equation}
\label{eq:pitagora}
|v_h|^2_{1, E} = |v_1|^2_{1, E} + |v_2|^2_{1, E} \, .
\end{equation}
%
\noindent
Given a vector $\mathbf{g} := (g_i)_{i=1}^N$, we introduce the norm
\[
\|\mathbf{g}\|^2_{l^2} := \sum_{i=1}^N g_i^2 \,.
\]
The following  lemma for polynomials is easy to check (the simple proof is omitted).
\begin{lemma}
\label{lm:l2piccolo}
Let $E \in \Omega_h$. Under the assumptions $\mathbf{(A0)}$, $\mathbf{(A1)}$ and $\mathbf{(A2)}$,
let $\mathbf{g}:= (g_i)_{i=1}^{k(k-1)/2}$ be a vector of real numbers and $g := \sum_i^{k(k-1)/2} g_i \, m_i \in \Pk_{k-2}(E)$. 
Then we have the following norm equivalence
\[
\beta_* \, h_E^2 \, \|\mathbf{g}\|^2_{l^2} \leq \|g\|^2_{0, E} \leq \beta^* \, h_E^2 \, \|\mathbf{g}\|^2_{l^2}
\]
for two positive uniform constants $\beta_*$, $\beta^*$.
\end{lemma}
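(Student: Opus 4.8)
The assertion is a finite-dimensional norm equivalence, so the only genuine content is the uniformity of the constants $\beta_*,\beta^*$ with respect to the mesh. The plan is to rescale $E$ to unit diameter, recast the statement as a two-sided eigenvalue bound for the associated mass (Gram) matrix, and derive these bounds from the shape regularity $\mathbf{(A1)}$.

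First I would pass to a reference configuration through the affine map $F(\hat{\mathbf{x}}) := \mathbf{x}_E + h_E\,\hat{\mathbf{x}}$, which sends $E$ onto an element $\hat E$ of diameter $1$. Since ${\rm d}E = h_E^2\,{\rm d}\hat E$ in two dimensions, setting $\hat g := g\circ F$ and $\hat m_i := m_i\circ F$ gives $\|g\|_{0,E}^2 = h_E^2\,\|\hat g\|_{0,\hat E}^2$, while the coefficient vector $\mathbf{g}$ and the normalization $\|\hat m_i\|_{L^\infty(\hat E)} = \|m_i\|_{L^\infty(E)} = 1$ are left unchanged. Dividing by $h_E^2$, the claim is equivalent to $\beta_*\|\mathbf{g}\|_{l^2}^2 \le \|\hat g\|_{0,\hat E}^2 \le \beta^*\|\mathbf{g}\|_{l^2}^2$ with constants independent of $E$. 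Writing $\hat g = \sum_i g_i \hat m_i$ one has $\|\hat g\|_{0,\hat E}^2 = \mathbf{g}^{\!\top}\widehat M\,\mathbf{g}$, where the symmetric positive definite matrix $\widehat M_{ij} := \int_{\hat E}\hat m_i\,\hat m_j\,{\rm d}\hat E$ is the mass matrix of the basis, so it suffices to bound $\lambda_{\max}(\widehat M)$ and $\lambda_{\min}(\widehat M)$ uniformly in $E$.

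The upper bound is immediate: each diagonal entry satisfies $\widehat M_{ii} = \|\hat m_i\|_{0,\hat E}^2 \le |\hat E|\,\|\hat m_i\|_{L^\infty(\hat E)}^2 \le |\hat E| \lesssim 1$, hence ${\rm tr}(\widehat M)\lesssim k(k-1)/2$ and $\lambda_{\max}(\widehat M)\le\beta^*$. The lower bound is the only delicate point, and here I would use assumption $\mathbf{(A1)}$: the rescaled element $\hat E$ contains a ball $\hat B$ of radius $\ge\rho$ centered at some $\hat{\mathbf{x}}_0$ with $|\hat{\mathbf{x}}_0|\le 1$. Taking $\{m_i\}$ to be the natural (normalized scaled monomial) basis of the VEM construction, the $\hat m_i$ are the monomials $\hat{\mathbf{x}}^{\alpha}$ up to normalizing factors that $\mathbf{(A1)}$ confines to a fixed bounded interval. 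I would then bound $\|\hat g\|_{0,\hat E}^2 \ge \|\hat g\|_{0,\hat B}^2$ and translate $\hat B$ to the origin: on the fixed ball $B(0,\rho)$ the monomials form a fixed positive definite Gram system, while the Taylor-shift matrix expressing the coefficients about $\hat{\mathbf{x}}_0$ in terms of $\mathbf{g}$ depends continuously on $\hat{\mathbf{x}}_0$ and is invertible for every $\hat{\mathbf{x}}_0$. Since $\hat{\mathbf{x}}_0$ ranges over the compact set $\{|\hat{\mathbf{x}}_0|\le 1\}$, its inverse is uniformly bounded, and chaining these estimates yields $\|\hat g\|_{0,\hat E}^2\ge\beta_*\|\mathbf{g}\|_{l^2}^2$ with $\beta_*>0$ independent of $E$.

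The main obstacle is precisely this uniform lower bound: for a single element it is merely the nondegeneracy of a basis in $L^2$, but making $\beta_*$ mesh-independent forces one to exploit $\mathbf{(A1)}$ in an essential way, through the fixed-radius inscribed ball and the compactness of the admissible set of ball centers, exactly in the spirit of the uniformity arguments used in Lemma \ref{lm:tracethm}.
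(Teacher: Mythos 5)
Your proof is correct, and in fact it supplies more than the paper does: the authors state this lemma with the remark that ``the simple proof is omitted,'' so there is no written argument to compare against. Your route --- rescale to unit diameter, reduce the two-sided bound to eigenvalue bounds for the mass matrix $\widehat M$, get $\lambda_{\max}$ from the trace and the $L^\infty$-normalization, and get $\lambda_{\min}$ from the inscribed ball of radius $\rho$ guaranteed by $\mathbf{(A1)}$ together with a compactness argument over the admissible ball centers --- is exactly the standard argument such omitted VEM proofs rely on, and every step you give is sound (in particular, the Taylor-shift matrix is unipotent, so its inverse has entries polynomial in $\hat{\mathbf{x}}_0$ and is uniformly bounded on the compact set $|\hat{\mathbf{x}}_0|\le 1$, and the diagonal normalization matrix causes no trouble since its inverse has norm $\max_\alpha \|\hat{\mathbf{x}}^\alpha\|_{L^\infty(\hat E)} \le 1$). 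One point you make implicitly deserves emphasis: the lemma as literally stated, for an \emph{arbitrary} $L^\infty$-normalized basis $\{m_i\}$, is false --- one can choose normalized bases that are nearly linearly dependent, making $\lambda_{\min}(\widehat M)$ arbitrarily small even on a single fixed element --- so your specialization to the scaled monomial basis is not a convenience but a necessity, and it matches the intended (standard VEM) reading of the degrees of freedom $\boldsymbol{D^o}$.
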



\begin{lemma}
\label{lm:coercivity}
Let $E \in \Omega_h$. Under the assumptions $\mathbf{(A0)}$, $\mathbf{(A1)}$ and $\mathbf{(A2)}$ the following holds
\[
\mathcal{S}^E(v_h, \, v_h) \geq C \,  |v_h|^2_{1, E} \qquad \text{for all $v_h \in V_h^E$}
\]
where the constant $C$ depends on $k$ and the shape regularity constant $\rho$.
\end{lemma}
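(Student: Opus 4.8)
The plan is to exploit the $H^1$-orthogonal decomposition $v_h = v_1 + v_2$ of \eqref{eq:v1v2}--\eqref{eq:pitagora}, so that it suffices to control the harmonic part $v_1$ by the boundary DoFs and the bubble part $v_2$ by the internal DoFs, and then to recombine using the fact that, by \eqref{eq:s^e}, $\mathcal{S}^E(v_h,v_h) = \|\boldsymbol{D^o}(v_h)\|^2_{l^2} + \|\boldsymbol{D^{\partial}}(v_h)\|^2_{l^2}$.

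First I would estimate the bubble part. Since $v_2 = 0$ on $\partial E$ and $\Delta v_2 = \Delta v_h =: p \in \Pk_{k-2}(E)$, an integration by parts gives $|v_2|^2_{1,E} = -\int_E v_2\, p$. Expanding $p = \sum_j c_j m_j$ in the basis of \eqref{eq:s^e}, this equals $-|E|\sum_j c_j\, \boldsymbol{D^o}_j(v_2)$, so by Cauchy--Schwarz $|v_2|^2_{1,E} \le |E|\,\|\mathbf{c}\|_{l^2}\,\|\boldsymbol{D^o}(v_2)\|_{l^2}$. Lemma \ref{lm:l2piccolo} converts $\|\mathbf{c}\|_{l^2}$ into a multiple of $h_E^{-1}\|p\|_{0,E} = h_E^{-1}\|\Delta v_2\|_{0,E}$, the inverse inequality \eqref{eq:inv2} bounds this by $h_E^{-2}|v_2|_{1,E}$, and $|E| \lesssim h_E^2$; cancelling one power of $|v_2|_{1,E}$ yields $|v_2|_{1,E} \lesssim \|\boldsymbol{D^o}(v_2)\|_{l^2}$. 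Since $v_2 = v_h - v_1$, I then split $\|\boldsymbol{D^o}(v_2)\|_{l^2} \le \|\boldsymbol{D^o}(v_h)\|_{l^2} + \|\boldsymbol{D^o}(v_1)\|_{l^2}$, the first term being already part of $\mathcal{S}^E(v_h,v_h)$ and the second being handled next.

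For the harmonic part $v_1$ the goal is $|v_1|_{1,E} \lesssim \|\boldsymbol{D^{\partial}}(v_h)\|_{l^2}$, together with the auxiliary bound $\|\boldsymbol{D^o}(v_1)\|_{l^2} \lesssim \|\boldsymbol{D^{\partial}}(v_h)\|_{l^2}$. For the latter I use $|\boldsymbol{D^o}_j(v_1)| \le h_E^{-1}\|v_1\|_{0,E}$; by the maximum principle $\|v_1\|_{0,E} \lesssim h_E\,\|v_h\|_{L^{\infty}(\partial E)}$, and on each edge $v_h$ is a degree-$k$ polynomial in the edge parameter pinned by its $k+1$ nodal (Gauss--Lobatto) DoF values, whose $L^\infty$ norm is controlled by the $l^2$ norm of those values uniformly (bounded Lebesgue constant). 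For $|v_1|_{1,E}$ itself I would invoke the uniform harmonic-extension estimate $|v_1|_{1,E} \lesssim |v_h|_{1/2,\partial E}$, obtained exactly as in Lemma \ref{lm:tracethm} by transporting the problem to the reference ball $B_E$ through the radial map $F$, and then bound the edgewise $H^{1/2}$ seminorm of the boundary trace by the $l^2$ norm of the nodal DoFs through the finite-dimensional norm equivalence on the (rescaled) degree-$k$ trace space.

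Collecting the two steps gives $|v_h|^2_{1,E} = |v_1|^2_{1,E} + |v_2|^2_{1,E} \lesssim \|\boldsymbol{D^{\partial}}(v_h)\|^2_{l^2} + \|\boldsymbol{D^o}(v_h)\|^2_{l^2} = \mathcal{S}^E(v_h,v_h)$, which is the claim. I expect the harmonic part to be the main obstacle: one must show that the Dirichlet energy of the boundary trace is controlled by the $l^2$ norm of the boundary DoFs \emph{uniformly} in the mesh, while on the curved edge $e_1$ the trace lies in $\widetilde{\Pk_k}(e_1)$ rather than in a polynomial space. The remedy is that $\gamma$ is fixed and smooth, so after pulling back by $\gamma$ and rescaling the parameter interval (comparable to $h_E$) the trace becomes a genuine degree-$k$ polynomial, and both the finite-dimensional norm equivalences and the reference-ball trace/extension estimates hold with constants independent of $E$.
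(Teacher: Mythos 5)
Your overall architecture is the same as the paper's: the same $H^1$-orthogonal decomposition \eqref{eq:v1v2}--\eqref{eq:pitagora}, and your bubble-part estimate is the paper's computation rearranged. (The paper writes $|v_2|^2_{1,E} = -\int_E \Delta v_2\, v_h\,{\rm d}E + \int_E \Delta v_2\, v_1\,{\rm d}E$ and bounds the two integrals with Lemma \ref{lm:l2piccolo}, the inverse estimate \eqref{eq:inv2} and the $L^2$ control of $v_1$; you instead bound $|v_2|_{1,E}$ by $\|\boldsymbol{D^o}(v_2)\|_{l^2}$ and then split $\boldsymbol{D^o}(v_2) = \boldsymbol{D^o}(v_h) - \boldsymbol{D^o}(v_1)$ --- same tools, same outcome.) Your auxiliary bounds, i.e.\ $\|v_1\|_{0,E} \lesssim h_E \|v_h\|_{L^{\infty}(\partial E)}$ via the maximum principle and the control of $\|v_h\|_{L^{\infty}(\partial E)}$ by the nodal DoFs via the Lebesgue constant after pulling back the curved edge to $I_{e_1}$, are also exactly the paper's.

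The one place where you diverge --- the estimate of $|v_1|_{1,E}$ itself --- is where your argument has a genuine gap. The harmonic-extension estimate requires the \emph{global} seminorm $|v_h|_{1/2,\partial E}$, and this quantity is not controlled by edgewise $H^{1/2}$ seminorms or edgewise finite-dimensional norm equivalences: the double integral defining $|\cdot|_{1/2,\partial E}$ contains cross terms between pairs of edges, which for continuous piecewise-smooth data reduce to Hardy-type weighted $L^2$ integrals concentrated at the vertices; controlling them requires the uniform separation of the interior angles (Remark \ref{rm:pluto}) together with Markov-type inequalities on the trace space --- i.e.\ precisely the $L^{\infty}$ machinery you were trying to avoid, and none of this appears in your sketch. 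The repair is immediate and is the paper's route, which you already have in hand: since $v_1 \in V_h^E$ (it is harmonic, hence $-\Delta v_1 \in \Pk_{k-2}(E)$, with admissible boundary traces), the inverse inequality \eqref{eq:inv} gives $|v_1|_{1,E} \lesssim h_E^{-1}\|v_1\|_{0,E}$, and your own chain $\|v_1\|_{0,E} \lesssim h_E\, \|v_h\|_{L^{\infty}(\partial E)} \lesssim h_E\, \|\boldsymbol{D^{\partial}}(v_h)\|_{l^2}$ (maximum principle plus Lebesgue constant, noting that sup norms are invariant under the pull-back by $\gamma$) then yields $|v_1|_{1,E} \lesssim \left(\mathcal{S}^E(v_h,v_h)\right)^{1/2}$ with a constant depending only on $k$ and $\rho$; this is \eqref{eq:4.2a}--\eqref{eq:4.2} in the paper, and it bypasses fractional norms entirely (also avoiding the dependence on $\gamma$ that your $H^{1/2}$ equivalences on the curved edge would introduce).
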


\begin{proof}
The proof makes use of tools from \cite{2016stability, chen2017, vacca2017}. 
%
%
Let $v_h = v_1 + v_2$ be the $H^1$-orthogonal decomposition given by  \eqref{eq:v1v2}. 
%
We start by analysing the term $v_1$. 
Being $v_1 \in V_h^E$, by Lemma \ref{lm:inverse estimate} we get
\[
|v_1|_{1, E}  \lesssim  h_E^{-1} \, \|v_1\|_{0, E} \, , 
\]
moreover by H\"{o}lder inequality it holds that
\[
\|v_1\|_{0, E}  \leq |E|^{1/2} \, \|v_1\|_{L^{\infty}(E)} \lesssim h_E \, \|v_1\|_{L^{\infty}(E)}.
\]
Now, collecting the previous inequalities and applying the maximum principle to the harmonic function $v_1$ and recalling that ${v_1}_{|\partial E} = {v_h}_{|\partial E}$, we get 
\begin{equation}
\label{eq:4.2a}
\begin{split}
|v_1|^2_{1, E} 
& \lesssim   \|v_1\|^2_{L^{\infty}(E)} 
\lesssim  \|v_1\|^2_{L^{\infty}(\partial E)}
\lesssim  \|v_h\|^2_{L^{\infty}(\partial E)} \,.
\end{split}
\end{equation}
Finally, standard results for polynomials in one dimension (notice that on the curved edge $e$ we work on the correspondent straight segment $I_e$), we get
\begin{equation}
\label{eq:4.2b}
\|v_h\|^2_{L^{\infty}(\partial E)}
\lesssim \sum_{i=1}^{k N_E}   \boldsymbol{D}^{\boldsymbol{\partial}}_i(v_h)^2
\lesssim \mathcal{S}^E(v_h, \, v_h) 
\end{equation}  
so that from \eqref{eq:4.2a} and \eqref{eq:4.2b} we obtain
\begin{equation}
\label{eq:4.2}
|v_1|^2_{1, E} 
 \lesssim \mathcal{S}^E(v_h, \, v_h) \,.
\end{equation}
Note that, as a by product of the above calculations, we also have
\begin{equation}
\label{eq:lorenzo}
h_E^{-1} \, \|v_1\|_{0, E} \lesssim \mathcal{S}^E(v_h, \, v_h) \,.
\end{equation}
For what concerns the term $v_2$, recalling \eqref{eq:v1v2a}, integration by parts yields
\begin{equation}
\label{eq:4.3}
\begin{split}
|v_2|^2_{1, E} 
& = - \int_E \Delta v_2 \, v_2 \, {\rm d}E   
  = - \int_E \Delta v_2 \, v_h \, {\rm d}E +  \int_E \Delta v_2 \, v_1 \, {\rm d}E \,. 
\end{split}
\end{equation}
Let us analyse the first integral in the right hand side of \eqref{eq:4.3}.
Being $\Delta v_2 \in \Pk_{k-2}(E)$, we can write
\[
\Delta v_2 = \sum_{i=1}^{k(k-1)/2} g_i \, m_i
\] 
therefore by definition of $\boldsymbol{D^o}$ and H\"{o}lder inequality for sequences we get
\begin{equation}
\label{eq:4.4a}
\begin{split}
\int_E \Delta v_2 \, v_h \, {\rm d}E  
&= \sum_{i=1}^{k(k-1)/2} g_i \int_E m_i \, v_h \, {\rm d}E 
 = \sum_{i=1}^{k(k-1)/2} |E| \, g_i \, \boldsymbol{D}^{\boldsymbol{o}}_i(v_h) \\
&\leq |E| \, \|\mathbf{g}\|_{l^2} \, \left( \sum_{i=1}^{k(k-1)/2} \boldsymbol{D^{o}_i}(v_h)^2 \right)^{1/2}
\lesssim h_E^2 \, \|\mathbf{g}\|_{l^2} \, \left(\mathcal{S}^E(v_h, \, v_h) \right)^{1/2} \,.
\end{split}
\end{equation}
From \eqref{eq:4.4a}, using Lemma \ref{lm:l2piccolo} and \eqref{eq:inv2} we obtain
\begin{equation}
\label{eq:4.4}
\begin{split}
\int_E \Delta v_2 \, v \, {\rm d}E  
& \lesssim h_E \, \|\Delta v_2\|_{0, E} \, \left(\mathcal{S}^E(v_h, \, v_h) \right)^{1/2} 
  \lesssim |v_2|_{1,E} \left(\mathcal{S}^E(v_h, \, v_h) \right)^{1/2} \,. 
\end{split}
\end{equation}
The bound of the second integral in \eqref{eq:4.3} follows by \eqref{eq:inv2} and \eqref{eq:lorenzo}:
\begin{equation}
\label{eq:4.5}
\begin{split}
\int_E \Delta v_2 \, v_1 \, {\rm d}E  
& \leq \|\Delta v_2\|_{0, E} \, \|v_1\|_{0, E} 
  \lesssim  h_E^{-1} \, |v_2|_{1, E} \, \|v_1\|_{0, E} 
  \lesssim |v_2|_{1, E} \, \left(\mathcal{S}^E(v_h, \, v_h) \right)^{1/2}\,.
\end{split}
\end{equation}
Therefore by \eqref{eq:4.3}, \eqref{eq:4.4}  and \eqref{eq:4.5}
\begin{equation}
\label{eq:4.6}
|v_2|^2_{1,E} \lesssim  \mathcal{S}^E(v_h, \, v_h) \,.
\end{equation}
The thesis follows by collecting \eqref{eq:4.2} and \eqref{eq:4.6} in \eqref{eq:pitagora}.
\end{proof}
We can state the main result of the section. 

\begin{proposition}
\label{prp:stability}
Let any $\epsilon \in (0, 1/2)$. There exist two positive uniform constants $\alpha_*$ and $\alpha^*$ such that for any element $E \in \Omega_h$ it holds that
\begin{gather}
\label{eq:stability1}
 a_h^E (v, \, v) \geq \alpha_* \, a^E (v, \, v) \,,  \\
 \label{eq:stability2}
 a_h^E (v + p_k, \, v + p_k)  \leq \alpha^* \, 
 \left( 
 | v + p_k|_{1,E}^2 +
 |(I -\Pi_k^{\nabla, E}) v |_{1,E}^2 +
 h^{2 \epsilon} |(I -\Pi_k^{\nabla, E}) v |_{1+\epsilon,E}^2  
 \right) \,, 
\end{gather}
for all $v \in V_h^E$ and  $p_k \in \Pk_k(E)$.
\end{proposition}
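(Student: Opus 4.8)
The plan is to prove the two bounds separately, relying on the decomposition $a_h^E(u,v) = a^E(\PN u, \PN v) + \mathcal{S}^E((I-\PN)u, (I-\PN)v)$ from \eqref{eq:a_h^E def} and on the stabilizer estimates of Lemmas \ref{lm:continuity} and \ref{lm:coercivity}. For the lower bound \eqref{eq:stability1}, taking $u=v=v_h \in V_h^E$ gives $a_h^E(v,v) = |\PN v|_{1,E}^2 + \mathcal{S}^E((I-\PN)v, (I-\PN)v)$. The first term equals $a^E(\PN v, \PN v)$, and since $\PN$ is the $H^1$-seminorm projection, the Pythagorean identity $|v|_{1,E}^2 = |\PN v|_{1,E}^2 + |(I-\PN)v|_{1,E}^2$ holds. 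The plan is to apply the coercivity Lemma \ref{lm:coercivity} to the function $(I-\PN)v$; here the subtlety is that $(I-\PN)v \notin V_h^E$ in general (see Remark \ref{rm:outside}), so I would instead observe that $\mathcal{S}^E$ only sees $v_h$ through its DoFs and invoke the fact that $\PN$ reproduces the kernel structure so that $\mathcal{S}^E((I-\PN)v, (I-\PN)v) \gtrsim |(I-\PN)v|_{1,E}^2$ follows from the same harmonic-plus-bubble argument as in Lemma \ref{lm:coercivity}. Then $a_h^E(v,v) \gtrsim |\PN v|_{1,E}^2 + |(I-\PN)v|_{1,E}^2 = |v|_{1,E}^2 = a^E(v,v)$, giving \eqref{eq:stability1}.

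For the upper bound \eqref{eq:stability2}, I would set $w := v + p_k$ and use linearity of $\PN$ together with the fact that $\PN$ is a projection fixing polynomials, so $\PN w = \PN v + p_k$ and $(I-\PN)w = (I-\PN)v$ (since $(I-\PN)p_k = 0$). Expanding $a_h^E(w,w) = |\PN w|_{1,E}^2 + \mathcal{S}^E((I-\PN)v, (I-\PN)v)$, the first term is bounded using the projection property $|\PN w|_{1,E} \leq |w|_{1,E}$, yielding the $|v+p_k|_{1,E}^2$ contribution. For the stabilizer term I would apply the continuity Lemma \ref{lm:continuity} with $\eta = (I-\PN)v$, obtaining
\[
\mathcal{S}^E((I-\PN)v, (I-\PN)v) \lesssim h_E^{-2}\|(I-\PN)v\|_{0,E}^2 + h_E^{2\epsilon}|(I-\PN)v|_{1+\epsilon,E}^2 .
\]
The second summand already matches the desired term. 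The remaining task is to convert the $L^2$-norm term into an $H^1$-seminorm term; since the second condition in the definition \eqref{eq:Pn_k^E} of $\PN$ fixes the mean of $v - \PN v$ over $\partial E$ (equivalently one may arrange the constant so a Poincar\'e inequality applies), a scaled Poincar\'e estimate gives $\|(I-\PN)v\|_{0,E} \lesssim h_E |(I-\PN)v|_{1,E}$, so that $h_E^{-2}\|(I-\PN)v\|_{0,E}^2 \lesssim |(I-\PN)v|_{1,E}^2$, producing the third term in \eqref{eq:stability2}.

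The main obstacle I anticipate is the coercivity step for \eqref{eq:stability1}, because the stabilizer is a bilinear form on $V_h^E + \Pk_k(E)$ evaluated at the non-virtual argument $(I-\PN)v$, whereas Lemma \ref{lm:coercivity} is stated only for arguments in $V_h^E$. I would resolve this by noting that coercivity of $\mathcal{S}^E$ is really a statement about the DoF-vector representation, and by combining it with the Pythagorean orthogonality \eqref{eq:pitagora} rather than applying it verbatim; concretely, one shows $|v|_{1,E}^2 \lesssim \mathcal{S}^E(v,v) + |\PN v|_{1,E}^2$ and rearranges. A secondary technical point is ensuring all hidden constants are uniform in $h_E$, which follows from the scaled/uniform nature of the estimates already established (Lemmas \ref{lm:inverse estimate}, \ref{lm:continuity}, \ref{lm:coercivity} and the trace Lemma \ref{lm:tracethm}) together with the fact that $\gamma$ and $\zeta$ are fixed once and for all, so the curved edge contributes no mesh-dependent blow-up.
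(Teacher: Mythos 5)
Your treatment of the upper bound \eqref{eq:stability2} is correct and is essentially the paper's own argument: $(I-\PN)(v+p_k)=(I-\PN)v$, continuity of the projection, Lemma \ref{lm:continuity} applied to $\eta=(I-\PN)v$, and the scaled Poincar\'e inequality (legitimate because the second condition in \eqref{eq:Pn_k^E} gives $\int_{\partial E}(v-\PN v)\,{\rm d}s=0$). The problem is the lower bound \eqref{eq:stability1}. Your central claim there, $\mathcal{S}^E\bigl((I-\PN)v,\,(I-\PN)v\bigr)\gtrsim |(I-\PN)v|_{1,E}^2$, is not obtainable ``by the same harmonic-plus-bubble argument as in Lemma \ref{lm:coercivity}'', and you cannot repair it by appealing to the DoF-vector representation. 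The proof of Lemma \ref{lm:coercivity} hinges on the step $\|v_h\|^2_{L^\infty(\partial E)}\lesssim \sum_i \boldsymbol{D}^{\boldsymbol{\partial}}_i(v_h)^2$, which uses that on each edge the argument lies in the $(k+1)$-dimensional trace space attached to the DoFs (mapped polynomials $\widetilde{\Pk_k}(e_1)$ on the curved edge, $\Pk_k(e_i)$ on straight ones). For $\eta=(I-\PN)v$ the restriction to the curved edge is $v_{|e_1}-(\PN v)_{|e_1}$, i.e.\ a mapped polynomial minus the trace of a genuine bivariate polynomial on a curved arc; this function lies outside that space, it can vanish at all $k+1$ quadrature nodes without vanishing identically, and so its sup norm is not controlled by its nodal values. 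Indeed $\mathcal{S}^E$ is degenerate on $V_h^E+\Pk_k(E)$: any $\eta$ there whose DoFs vanish has $\mathcal{S}^E(\eta,\eta)=0$ irrespective of $|\eta|_{1,E}$. Your fallback, ``show $|v|_{1,E}^2\lesssim \mathcal{S}^E(v,v)+|\PN v|_{1,E}^2$ and rearrange'', does not close the gap either: $a_h^E(v,v)$ contains $\mathcal{S}^E((I-\PN)v,(I-\PN)v)$, not $\mathcal{S}^E(v,v)$, so the rearrangement requires $\mathcal{S}^E(\PN v,\PN v)\lesssim |\PN v|_{1,E}^2$, which is false as stated (take $\PN v$ a nonzero constant: the left side is positive, the right side is zero).

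The paper circumvents precisely this obstruction. Since both sides of \eqref{eq:stability1} are unchanged when a constant is added to $v$, it works with $\widetilde v:=v-\Pi_0^{0,\partial E}v$, which still belongs to $V_h^E$ (constants are in $V_h^E$, Remark \ref{rm:non inclusion}). Lemma \ref{lm:coercivity} is applied to $\widetilde v$ itself --- never to $(I-\PN)v$ --- giving $a^E(v,v)=a^E(\widetilde v,\widetilde v)\lesssim \mathcal{S}^E(\widetilde v,\widetilde v)$. The stabilizer is then split as $\mathcal{S}^E(\widetilde v,\widetilde v)\lesssim \mathcal{S}^E(\PN\widetilde v,\PN\widetilde v)+\mathcal{S}^E((I-\PN)v,(I-\PN)v)$, and the polynomial part is tamed by the continuity Lemma \ref{lm:continuity} combined with the scaled Poincar\'e inequality \eqref{eq:cc} (applicable because $\Pi_0^{0,\partial E}(\PN\widetilde v)=0$ by construction) and a polynomial inverse estimate, yielding $\mathcal{S}^E(\PN\widetilde v,\PN\widetilde v)\lesssim |\PN\widetilde v|_{1,E}^2=|\PN v|_{1,E}^2$, which is a piece of $a_h^E(v,v)$. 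This normalization-by-the-boundary-mean step, absent from your argument, is the key idea that replaces the straight-edge reasoning in the curved setting, where $(I-\PN)v\notin V_h^E$.
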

\begin{proof}
First of all we introduce a useful scaled Poincar{\'e} inequality.
Let $w \in H^1(E)$ and let 
\begin{equation}
\label{eq:c}
\Pi_{0}^{0, \partial E} w:=  \frac{1}{|\partial E|} \int_{\partial E} w \, {\rm d}s \in \Pk_0(E) \,,
\end{equation}
then it holds that \cite{brenner2003poincare}
\begin{equation}
\label{eq:cc}
\|w - \Pi_{0}^{0, \partial E} w \|_{0, E} \leq C \, h_E \, |w|_{1, E} \, .
\end{equation}
Let us analyse the bound \eqref{eq:stability1}.
Let $\widetilde{v} := v - \Pi_{0}^{0, \partial E} v$, then Lemma \ref{lm:coercivity} 
and some simple algebra yield
\begin{equation}
\label{eq:loreplus}
\begin{split}
a^E(v, \, v) = a^E(\widetilde{v}, \, \widetilde{v})
& \lesssim  \mathcal{S}^{E} (\widetilde{v}, \, \widetilde{v})  
\\
& \lesssim 
\mathcal{S}^{E} (\Pi_k^{\nabla, E}\widetilde{v}, \, \Pi_k^{\nabla, E}\widetilde{v}) + 
\mathcal{S}^{E} ((I -\Pi_k^{\nabla, E} )\widetilde{v}, \, (I -\Pi_k^{\nabla, E} )\widetilde{v}) \,.
\end{split}
\end{equation}
By definition \eqref{eq:Pn_k^E}, it  clearly holds that $(I -\Pi_k^{\nabla, E} )\widetilde{v} = (I -\Pi_k^{\nabla, E} )v$.
From Lemma \ref{lm:continuity}, applied to the first term in the right hand side of \eqref{eq:loreplus},
 we then infer 
\begin{equation}
\label{eq:loreplus0}
\begin{split}
a^E(v, \, v)
\lesssim
h_E^{-2} \, \|\Pi_k^{\nabla, E}\widetilde{v}\|_{0, E}^2 + h_E^{2 \epsilon} \, |\Pi_k^{\nabla, E}\widetilde{v}|_{1+\epsilon, E}^2 +
\mathcal{S}^{E} ((I -\Pi_k^{\nabla, E} )v, \, (I -\Pi_k^{\nabla, E} )v)  \,. 
\end{split}
\end{equation}
We observe now that, by definition \eqref{eq:Pn_k^E}, it follows that
\[
\Pi_0^{0, \partial E} \left(\Pi_k^{\nabla, E}\widetilde{v} \right) =
\Pi_0^{0, \partial E} \widetilde{v} =
\Pi_0^{0, \partial E} \left( v - \Pi_0^{0, \partial E} v\right) = 0 
\]
therefore from \eqref{eq:cc} we obtain
\begin{equation}
\label{eq:loreplus1}
h_E^{-2} \, \|\Pi_k^{\nabla, E}\widetilde{v}\|_{0, E}^2 \lesssim  
 |\Pi_k^{\nabla, E}\widetilde{v}|_{1, E}^2 \,.
\end{equation} 
Moreover a standard polynomial inverse estimate on star-shaped polygons yields
\begin{equation}
\label{eq:loreplus2}
h_E^{2 \epsilon} \, |\Pi_k^{\nabla, E}\widetilde{v}|_{1+\epsilon, E}^2 \lesssim  
 |\Pi_k^{\nabla, E}\widetilde{v}|_{1, E}^2  \,.
\end{equation} 
Now \eqref{eq:stability1} follows by inserting  the bounds \eqref{eq:loreplus1} and \eqref{eq:loreplus2} in \eqref{eq:loreplus0} and by observing that $|\Pi_k^{\nabla, E}\widetilde{v}|_{1, E} = |\Pi_k^{\nabla, E}v|_{1, E}$.

For what concerns the bound  \eqref{eq:stability2}, 
since $(I -\Pi_k^{\nabla, E})p_k =0$,
by the continuity of $\Pi_k^{\nabla, E}$ with respect to the $H^1$-seminorm, it follows that
\begin{equation}
\label{eq:stability2a}
\begin{split}
a_h^E(v+ p_k, \, v+ p_k) 
&= 
|\Pi_k^{\nabla,E} (v + p_k)|^2_{1,E} +
\mathcal{S}^E((I -\Pi_k^{\nabla, E} )v, \, (I -\Pi_k^{\nabla, E} )v) 
\\
&\leq 
|v + p_k|^2_{1,E} +
\mathcal{S}^E((I -\Pi_k^{\nabla, E} )v, \, (I -\Pi_k^{\nabla, E} )v) \,.
\end{split}
\end{equation}
We remark that definition \eqref{eq:Pn_k^E} implies that $\Pi_0^{0, \partial} (I -\Pi_k^{\nabla, E} )v = 0$, therefore from \eqref{eq:stability2a}, Lemma \ref{lm:continuity}  and \eqref{eq:cc} we infer
\[
a_h^E(v+ p_k, \, v+ p_k) 
\lesssim
|v + p_k|^2_{1,E} +
|(I -\Pi_k^{\nabla, E} )v|_{1, E}^2 +
h_E^{2 \epsilon} |(I -\Pi_k^{\nabla, E} )v|_{1 + \epsilon, E}^2  \,.
\]
\end{proof}


\subsection{Convergence analysis}
\label{sub:convergence}

%
%
We have proved the stability of the method (Proposition \ref{prp:stability}) 
and the approximation properties of the space $V_h$ (Theorem \ref{thm:interpolante}).
We are now ready to prove the following optimal convergence result.
\begin{theorem}
\label{thm:convergence}
Under the assumptions $\mathbf{(A0)}$, $\mathbf{(A1)}$ and $\mathbf{(A2)}$, let $u$ be the solution of Problem \eqref{eq:continua} and $u_h$ be the solution of virtual Problem \eqref{eq:virtual}. Assume moreover $u \in H^{s}$ and $f \in H^{s-2}$, with $\frac{3}{2} \leq s \leq k +1$. Then 
\begin{equation}
\label{eq:error}
|u - u_h|_1 \leq C \, h^{s-1} (\|u\|_{s} + |f|_{s-2})
\end{equation}
where the constant $C$ depends on the degree $k$, the parametrization $\gamma$ and the shape regularity constant $\rho$.
\end{theorem}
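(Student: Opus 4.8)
The plan is to follow the standard Strang-type argument for nonconforming / variational crimes, adapted to the present setting where $\Pk_k(E) \not\subset V_h^E$ and the discrete bilinear form and load term are only approximate. First I would introduce a convenient comparison function: the interpolant $u_{\rm I} \in V_h$ furnished by Theorem \ref{thm:interpolante}, together with the element-wise polynomial approximation $u_\pi$ from Lemma \ref{lm:scott}. By coercivity of $a_h(\cdot,\cdot)$ on $V_h$ (which follows from the lower bound \eqref{eq:stability1} in Proposition \ref{prp:stability}, summed over elements), I can write
\[
\alpha_* \, |u_h - u_{\rm I}|_1^2 \lesssim a_h(u_h - u_{\rm I}, \, u_h - u_{\rm I}),
\]
and then set $\delta_h := u_h - u_{\rm I} \in V_h$ and expand $a_h(u_h - u_{\rm I}, \delta_h) = a_h(u_h, \delta_h) - a_h(u_{\rm I}, \delta_h)$. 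Using the discrete problem \eqref{eq:virtual} to replace $a_h(u_h, \delta_h)$ by $(f_h, \delta_h)$, and inserting the continuous problem \eqref{eq:continua} in the form $a(u, \delta_h) = (f, \delta_h)$, I would split the error into three groups of terms: an approximation term involving $u - u_{\rm I}$, a consistency term measuring $a_h(u_\pi, \delta_h) - a^E(u_\pi, \delta_h)$ element by element, and a load term $(f_h, \delta_h) - (f, \delta_h)$.

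Next I would estimate each piece. The key tool for the consistency term is the polynomial consistency of $a_h^E$: by definition \eqref{eq:a_h^E def} and the orthogonality property \eqref{eq:Pn_k^E} of $\Pi_k^{\nabla,E}$, one has $a_h^E(p_k, v_h) = a^E(p_k, v_h)$ whenever $p_k \in \Pk_k(E)$, so that on each element the error contribution reduces to expressions of the form $a_h^E(u_{\rm I} - u_\pi, \delta_h) - a^E(u - u_\pi, \delta_h)$. For these I would invoke the continuity upper bound \eqref{eq:stability2} of Proposition \ref{prp:stability} applied with $v = u_{\rm I} - u_\pi$ (noting $(I - \Pi_k^{\nabla,E})u_\pi = 0$), which controls $a_h^E$ in terms of $|u_{\rm I} - u_\pi|_{1,E}$ plus the higher-norm stabilization terms $h^{2\epsilon} |(I - \Pi_k^{\nabla,E})(u_{\rm I} - u_\pi)|^2_{1+\epsilon,E}$. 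The term $|u_{\rm I} - u_\pi|_{1,E}$ is bounded by the triangle inequality through $|u - u_{\rm I}|_{1,E}$ (Theorem \ref{thm:interpolante}) and $|u - u_\pi|_{1,E}$ (Lemma \ref{lm:scott}), both of order $h_E^{s-1}$. The load term $(f - f_h, \delta_h)$ is handled by the identity \eqref{eq:right}, writing it as $\sum_E \int_E (f - \Pi_{k-2}^{0,E} f)\, \delta_h \, {\rm d}E$; here I would subtract the best constant (or the $L^2$-projection of $\delta_h$) and use the $L^2$-orthogonality \eqref{eq:P0_k^E} together with the standard estimate $\|f - \Pi_{k-2}^{0,E} f\|_{0,E} \lesssim h_E^{s-2} |f|_{s-2,E}$ and a Poincar\'e inequality, yielding an $h^{s-1}|f|_{s-2}$ bound after pairing with $|\delta_h|_{1}$.

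The main obstacle will be the higher-order stabilization term carrying the extra factor $h^{2\epsilon}|(I - \Pi_k^{\nabla,E})(u_{\rm I} - u_\pi)|^2_{1+\epsilon,E}$, which is the genuinely new ingredient of the curved case: it is absent from the classical straight-edge analysis and forces one to control a fractional $H^{1+\epsilon}$ seminorm of the difference between the (non-polynomial) interpolant and a polynomial. I would bound it by splitting off $\Pi_k^{\nabla,E}$ and estimating $|u_{\rm I} - u_\pi|_{1+\epsilon,E}$; the polynomial part $u - u_\pi$ is controlled directly by Lemma \ref{lm:scott} with $r = 1+\epsilon$, while the harmonic corrector $u_{\rm I} - u_\pi$ (which solves the homogeneous problem \eqref{eq:pbinterdiff}) is bounded in $H^{1+\epsilon}$ via the elliptic regularity and trace arguments already assembled in the proof of Theorem \ref{thm:interpolante}. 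The crucial point is that, because of the factor $h^{2\epsilon}$, an estimate of the (suboptimal-looking) order $h_E^{s-1-\epsilon}$ for the $H^{1+\epsilon}$ seminorm is exactly enough to recover the full order $h^{s-1}$ after multiplication. Once all three groups are shown to be $\lesssim h^{s-1}(\|u\|_s + |f|_{s-2})\, |\delta_h|_1$, I divide through by $|\delta_h|_1$ and conclude with a final triangle inequality $|u - u_h|_1 \leq |u - u_{\rm I}|_1 + |u_{\rm I} - u_h|_1$, using Theorem \ref{thm:interpolante} once more for the first summand.
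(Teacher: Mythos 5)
Your proposal is correct and follows essentially the same route as the paper's proof: the same coercivity-plus-error-equation argument with the comparison functions $u_{\rm I}$ (Theorem \ref{thm:interpolante}) and $u_\pi$ (Lemma \ref{lm:scott}), the same implicit use of polynomial consistency of $a_h^E$, the same application of the stability bounds \eqref{eq:stability1}--\eqref{eq:stability2} with $v=-u_{\rm I}$, $p_k=u_\pi$, and the same key observation that the factor $h^{2\epsilon}$ compensates the $h^{s-1-\epsilon}$ rate of the fractional-order interpolation estimate. The only cosmetic difference is that the paper makes explicit the Cauchy--Schwarz and absorption step that reduces the bilinear consistency term $\sum_{E} a_h^E(u_\pi - u_{\rm I}, e_h)$ to the quadratic form $a_h^E(u_\pi - u_{\rm I}, u_\pi - u_{\rm I})$, a step your sketch leaves implicit.
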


\begin{proof}
Let $u_{\rm I}$ be the interpolant of $u$ in the sense of Theorem \ref{thm:interpolante} and $u_{\pi}$ be the polynomial approximation of $u$ given by Lemma \ref{lm:scott}. 
Let us set $e_h := u_h - u_{\rm I}$, then, following standard steps in VEM analysis \cite{volley}, we have
\begin{equation}
\label{eq:convergence1}
\begin{split}
\alpha_* \, |e_h|_1^2 
& \leq  a_h(e_h, \, e_h) 
=a_h(u_h - u_{\rm I}, \, e_h) \\
& =    (f_h - f, \, e_h) + 
\sum_{E \in \Omega_h} \left( a_h^E(u_{\pi} - u_{\rm I}, \, e_h) + a^E(u - u_{\pi}, \, e_h) \right) \\
&\leq C\,  \, h^{s-1} |f|_{s-2} \, |e_h|_1 + 
\sum_{E \in \Omega_h} a_h^E(u_{\pi} - u_{\rm I}, \, e_h) +
C  \,h^{s-1}\, |u|_{s} \, |e_h|_1
 \,.
\\
\end{split}
\end{equation}
From the Cauchy-Schwarz inequality we infer
\[
a_h^E(u_{\pi} - u_{\rm I}, \, e_h)  \leq 
\frac{1}{2} \, a_h^E(u_{\pi} - u_{\rm I}, \, u_{\pi} - u_{\rm I})    +
\frac{1}{2} \, a_h^E(e_h, \, e_h) \,.
\]
Inserting the previous inequality in \eqref{eq:convergence1}, we get
\begin{equation}
\label{eq:convergence2}
\begin{split}
\frac{\alpha_*}{2} \, |e_h|_1^2 
&\lesssim \, h^{s-1}  \left( |f|_{s-2} + |u|_{s}  \right) \, |e_h|_1 +
\sum_{E \in \Omega_h} a_h^E(u_{\pi} - u_{\rm I}, \, u_{\pi} - u_{\rm I}) \,.
\end{split}
\end{equation}
Now fix any one $\epsilon \in (0, 1/2)$. 
From \eqref{eq:stability2}, with $v_h =- u_{\rm I}$ and $p_k = u_{\pi}$, we have
\begin{equation}
\label{eq:convergence3}
\begin{split}
a_h^E(u_{\pi} - u_{\rm I}, \, u_{\pi} - u_{\rm I}) &\lesssim
|u_{\pi} - u_{\rm I}|^2_{1,E} +
|(I -\Pi_k^{\nabla, E} ) u_{\rm I}|_{1, E}^2 +
h_E^{2 \epsilon} |(I -\Pi_k^{\nabla, E}) u_{\rm I}|_{1 + \epsilon, E}^2 
\\
& =: \mu_1^E + \mu_2^E + \mu_3^E \,.
\end{split}
\end{equation}
The first term, using Lemma \ref{lm:scott},  is estimated as follows
\begin{equation}
\label{eq:mu1}
\mu_1^E \lesssim |u_{\pi} - u|^2_{1,E} + |u - u_{\rm I}|^2_{1,E} \lesssim h^{2s-2} \, |u|_{s, E}^2 + |u - u_{\rm I}|^2_{1,E} \,.
\end{equation}
Concerning the second term, by the continuity of the $\Pi_k^{\nabla, E}$ projection and Lemma \ref{lm:scott}   we have
\begin{equation}
\label{eq:mu2}
\begin{split}
\mu_2^E &\lesssim |(I -\Pi_k^{\nabla, E} ) ( u - u_{\rm I})|_{1, E}^2 + |(I -\Pi_k^{\nabla, E} )  u|_{1, E}^2 \lesssim |u - u_{\rm I}|_{1,E}^2 + h^{2s-2} \, |u|_{s, E}^2 \,.
\end{split}
\end{equation}
Finally the last term is handled using Lemma \ref{lm:scott} and standard polynomial inverse estimates on star-shaped domains
\begin{equation}
\label{eq:mu3}
\begin{split}
\mu_3^E &\lesssim 
h^{2 \epsilon} \, | u - u_{\rm I}|_{1 + \epsilon, E}^2 +
h^{2 \epsilon} \,|(I -\Pi_k^{\nabla, E} )  u|_{1+ \epsilon, E}^2 +
h^{2 \epsilon} \,|\Pi_k^{\nabla, E} (u - u_{\rm I})|_{1+ \epsilon, E}^2 \\
&\lesssim 
h^{2 \epsilon} \,  | u - u_{\rm I}|_{1 + \epsilon, E}^2 +
h^{2s-2} \,|u|_{s,E}^2 +  |u - u_{\rm I}|_{1, E}^2 \,.
\end{split}
\end{equation}
Collecting \eqref{eq:mu1} \eqref{eq:mu2} and \eqref{eq:mu3} in \eqref{eq:convergence3} we obtain
\[
a_h^E(u_{\pi} - u_{\rm I}, \, u_{\pi} - u_{\rm I}) \lesssim 
 | u - u_{\rm I}|_{1, E}^2 + 
h^{2 \epsilon} \,  | u - u_{\rm I}|_{1 + \epsilon, E}^2 +
h^{2 s -2 } \, |u|_{s, E}^2 
\]
so that from Theorem \ref{thm:interpolante} we infer
\begin{equation}
\label{eq:convergence4}
\sum_{E \in \Omega_h} a_h^E(u_{\pi} - u_{\rm I}, \, u_{\pi} - u_{\rm I}) \lesssim  h^{2 s -2 } \, \|u\|_s^2 \,.
\end{equation}
The thesis now easily follows from \eqref{eq:convergence2} and \eqref{eq:convergence4}.
\end{proof}

Regularity results on Lipschitz domains (see \cite{Grisvard:1992}) guarantee
that the solution $u$ of Problem \eqref{eq:continua} is in $H^{s}$ with $s \geq \frac{3}{2}$. 
We furthermore note that the condition $s \geq \frac{3}{2}$ in the statement of Theorem \ref{thm:convergence} could be reduced to $s>1$ by modifying Theorem \ref{thm:interpolante} in accordance with Remark \ref{rm:piddu}.

\section{Integration on curvilinear polygons}
\label{sec:4}
Let $E \in \Omega_h$ a curved element. The boundary $\partial E$ is described counterclockwise by the sequence of edges $e_1, e_2, \dots, e_{N_E}$ where, in accordance with the notation of Section \ref{sub:2.2} we assume that $e_1$ is a curved edge and $e_2, \dots, e_{N_E}$ are straight segments (see Figure \ref{fig:vianello0}).
\begin{figure}[!h]
\center{
\begin{overpic}[scale=0.30]{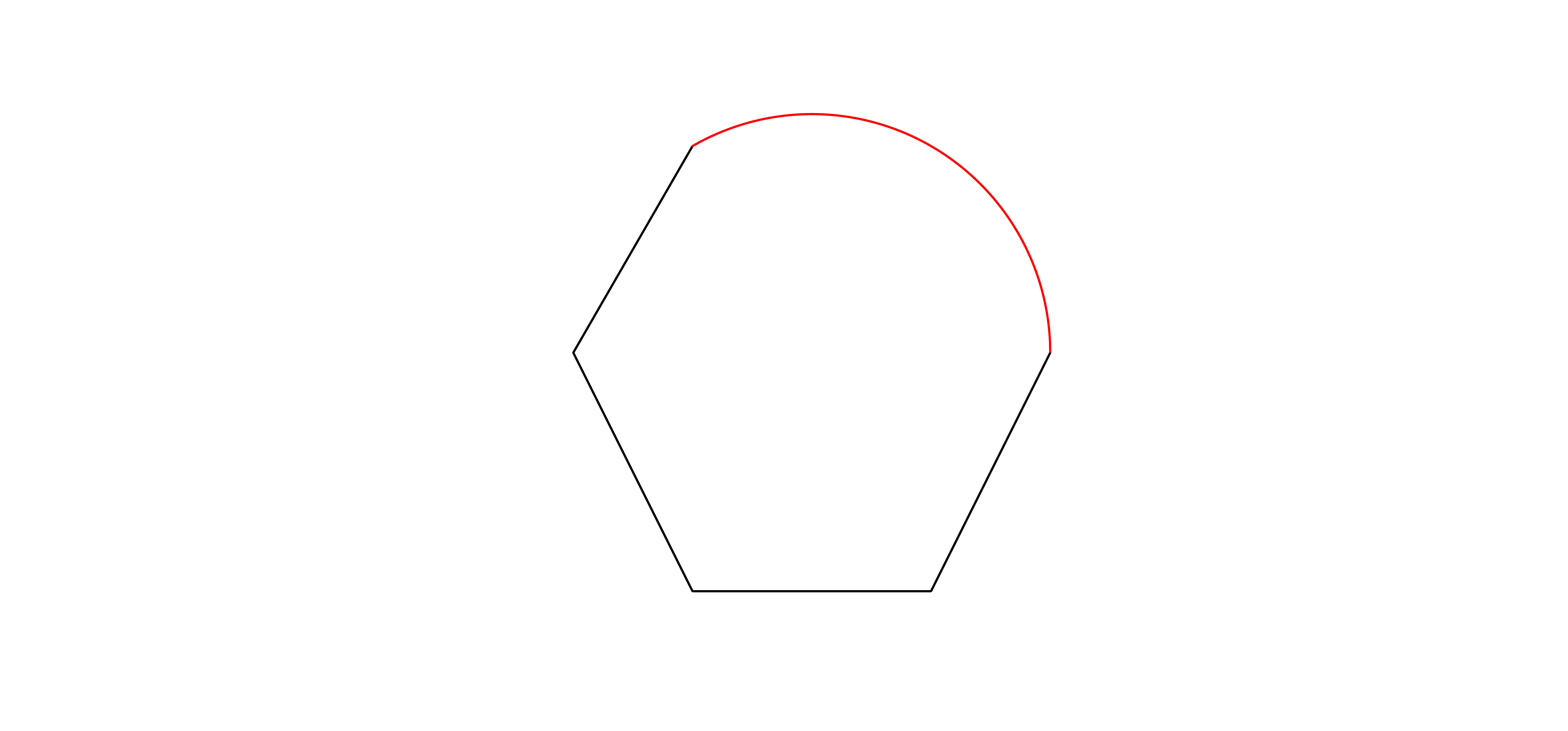}
\put (50,25) {{$E$}}
\put (65,35) {{$e_1$}}
\put (37,31) {{$e_2$}}
\put (37,15) {{$e_3$}}
\put (50,6) {{$e_4$}}
\put (65,15) {{$e_5$}}
\put (37,38) {{$(x_2, y_2)$}}
\put (29,24) {{$(y_3, y_3)$}}
\put (37,7) {{$(x_4, y_4)$}}
\put (60,7) {{$(x_5, y_5)$}}
\put (68,24) {{$(x_1, y_1)$}}
\end{overpic}
\vspace*{-1cm}
\caption{Curved domain $E$.}
\label{fig:vianello0}
}
\end{figure}
Following the guidelines of \cite{VEM-hitchhikers}, 
in order to compute the approximated bilinear form $a_h(\cdot, \, \cdot)$ and the discrete right hand side $(f_h, \, \cdot)$ we need to compute (or to approximate) for all $E \in \Omega_h$ the following integrals
\begin{align}
\label{eq:integral1}
&\int_{\partial E} v_h \, p_{k-1} \, {\rm d}s  &\qquad &\text{for all $v_h \in V_h^E$,  $p_{k-1} \in \Pk_{k-1}(E)$,} \\
\label{eq:integral2}
&\int_{E} q_{k-1} \, p_{k-1} \, {\rm d}E  &\qquad &\text{for all $q_{k-1}$, $p_{k-1} \in \Pk_{k-1}(E)$,} \\
\label{eq:integral3}
&\int_{E} f \, p_{k-2} \, {\rm d}E &\qquad &\text{for all $p_{k-2} \in \Pk_{k-2}(E)$.} 
\end{align} 
The goal of this section is to show the practical computations of the previous integrals. The main ideas we are going to develop are
\begin{itemize}
\item the boundary integrals are computed only using the DoFs (i.e. values at the Gauss-Lobatto nodes) and the parametrization $\gamma$,
\item the integrals on the element are computed avoiding partitions of the curved polygonal
domain $E$ into triangles or quadrangles.
\end{itemize}

\subsection{Integration on curved edges}
\label{sub:4.0}
The integral \eqref{eq:integral1} can be split edge by edge obtaining
\begin{equation}
\int_{\partial E} v_h \, p_{k-1} \, {\rm d}s = 
\int_{e_1} v_h \, p_{k-1} \, {\rm d}s + 
\sum_{i=2}^{N_E} \int_{e_i} v_h \, p_{k-1} \, {\rm d}s \,.
\end{equation}
We recall that, by definition \eqref{eq:virtualpace}, the virtual functions restricted to each straight edge are polynomials of degree $k$, therefore the integral on straight edges $e_i$ can be computed in the standard way \cite{VEM-hitchhikers} using the DoFs $\boldsymbol{D^{e_i}}$ 
(observing that the $(k+1)$ Gauss-Lobatto quadrature rule  is exact for polynomials 
up to degree $2k-1$).

Instead, on the curved edge $e_1$ the virtual functions are polynomial of degree $k$ with respect the parametrization $\gamma \colon I_{e_1} \to e_1$, i.e.
$v_{h_{|e_1}} = \widehat{q_k} \circ \gamma$ with $\widehat{q_k} \in \Pk_k(I_{e_1})$.
We approximate the integral on the curved edge using the same machinery of the straight case: let  
$$
\{\tau_j\}_{j=1}^{k+1} \qquad \text{and} \qquad \{\omega_j\}_{j=1}^{k+1},
$$ 
be the nodes and weights of the 
Gauss-Lobatto quadrature formula of degree of exactness $2k - 1$ on $I_{e_1}$. 
Recalling \eqref{eq:correspondences}, 
we set
\begin{multline}
\int_{e_1} v_h \, p_{k-1}  \, {\rm d}s = 
\int_{e_1} \widehat{q_k \, p_{k-1}} \circ \gamma^{-1} \, {\rm d}s  = \\
=
\int_{I_e} \widehat{q_k \, p_{k-1}} \|\gamma'\| \, {\rm d}t 
\approx \sum_{j = 1}^{k+1} \widehat{q_k \, p_{k-1}}(\tau_j) \, \|\gamma'(\tau_j)\| \, \omega_j = \\
=v_h (\gamma (\tau_j)) \, p_{k-1} (\gamma (\tau_j)) \, \|\gamma'(\tau_j)\| \, \omega_j 
= \sum_{j = 1}^{k+1} \boldsymbol{D}^{\boldsymbol{e_1}}_j(v_h) \, p_{k-1} (\gamma (\tau_j)) \,
\|\gamma'(\tau_j)\| \, \omega_j \,.
\end{multline}
We remark that the integration rule above is a trivial extension of the straight case, in particular if $e_1$ is a straight side
we recover the standard rule.

\subsection{Integration on polygons}
\label{sub:4.1}

In the present section we review the basic tools for the construction
of a quadrature rule over (possibly non convex) polygons
that is exact for polynomials of degree $2M$ 
and uses $O(M^2)$ nodes.
The quadrature formula was introduced in \cite{vianello1, vianello2}.
Let $E \subset \R^2$ be a bounded and simply connected
polygon and let $\mathcal{R}$ a suitable rectangle containing $E$.
We consider the problem of constructing a quadrature formula (with nodes in $\mathcal{R}$)
which is exact for all polynomials of degree at most $2M$.
%
\begin{proposition}
\label{prp:vianello}
Let the boundary $\partial E$ be described counterclockwise by the sequence of vertexes
\[
\begin{gathered}
V_i := (x_i, \, y_i) \qquad \text{for $i=1, \dots, N_E$} \\
\partial E =  \cup_{i=1}^{N_E} e_i := \cup_{i=1}^{N_E}[V_i, \, V_{i+1}], \qquad \text{with $V_{N_E + 1} = V_1$} 
\end{gathered}
\]
and let
\[ 
\begin{gathered}
\{t_j\}_{j=1}^{M+1}  \qquad \text{and} \qquad \{\nu_j\}_{j=1}^{M+1},
\end{gathered}
\]	
be the nodes and weights of the Gauss-Legendre quadrature formula of degree of exactness 
$2M+1$ on $[-1, 1]$.
Let $g \in C(\mathcal{R})$ where $\mathcal{R}$ is a rectangle containing $E$.
Then the following quadrature formula is exact over $E$ for all polynomials of degree at most $2M$
\[
\iint_{E} g(x, \, y) \, {\rm d}x  \, {\rm d}y 
\approx
\sum_{e_i \in \partial E}
\sum_{j = 1}^{M+1}
\sum_{m = 1}^{M+1}
g(x_i(t_{j,m}), \, y_i(t_{j})) \, w_{ijm}
\]
where
\[
\begin{aligned}
x_i(t_{j,m}) &:= \frac{x_i(t_j) - \alpha}{2} t_m + \frac{x_i(t_j) + \alpha}{2},
\qquad \text{with} \qquad
x_i(t_j) := \frac{x_{i+1} - x_i}{2} t_j + \frac{x_{i+1} + x_i}{2} \,, \\
y_i(t_j) &:= \frac{y_{i+1} - y_i}{2} t_j + \frac{y_{i+1} + y_i}{2} \,, \\
w_{ijm} &:= \frac{y_{i+1} - y_i}{4} (x_i(t_j) - \alpha) \, \nu_j \, \nu_m \,,
\end{aligned}
\]
and $\alpha$ is a convex combination of the coordinate $x_i$'s.
\end{proposition}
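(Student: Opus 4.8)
The plan is to convert the area integral into a boundary integral via the Gauss--Green (divergence) theorem and then to recognize the two remaining one-dimensional integrations as exact Gauss--Legendre rules. By linearity of both sides of the formula, it suffices to establish exactness for a single monomial $g(x, y) = x^p y^q$ with $p + q \leq 2M$.

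First I would introduce the partial antiderivative in the $x$-direction,
\[
G(x, y) := \int_\alpha^x g(\xi, y) \, {\rm d}\xi ,
\]
so that $\partial G / \partial x = g$. Applying the divergence theorem to the vector field $(G, 0)$ over $E$ yields
\[
\iint_E g \, {\rm d}x \, {\rm d}y = \iint_E \frac{\partial G}{\partial x} \, {\rm d}x \, {\rm d}y = \oint_{\partial E} G \, n_x \, {\rm d}s ,
\]
where $n_x$ denotes the first component of the outward unit normal. Here one checks that the value is independent of $\alpha$: changing $\alpha$ alters $G$ only by a function of $y$ alone, whose boundary contribution vanishes by the same identity. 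This is exactly what permits $\alpha$ to be chosen freely as a convex combination of the $x_i$, a choice that affects only numerical stability and not exactness.

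Next I would parametrize each edge $e_i = [V_i, V_{i+1}]$ by $t \mapsto (x_i(t), y_i(t))$ for $t \in [-1, 1]$, using the affine maps given in the statement. For the counterclockwise orientation the outward normal satisfies $n_x \, {\rm d}s = y_i'(t) \, {\rm d}t = \tfrac{y_{i+1} - y_i}{2} \, {\rm d}t$, hence
\[
\oint_{\partial E} G \, n_x \, {\rm d}s = \sum_{e_i \in \partial E} \frac{y_{i+1} - y_i}{2} \int_{-1}^1 G(x_i(t), y_i(t)) \, {\rm d}t .
\]
I would then rewrite the inner antiderivative $G(x_i(t), y_i(t)) = \int_\alpha^{x_i(t)} g(\xi, y_i(t)) \, {\rm d}\xi$ through the affine substitution $\xi = \tfrac{x_i(t) - \alpha}{2} s + \tfrac{x_i(t) + \alpha}{2}$, $s \in [-1,1]$, whose Jacobian is $\tfrac{x_i(t) - \alpha}{2}$. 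This substitution produces precisely the inner node abscissa $x_i(t_{j,m})$ and assembles the iterated integral over $[-1,1]^2$ carrying the weight $w_{ijm}$.

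The final and most delicate step is the degree count that makes the tensor-product Gauss--Legendre rule exact, used here with no margin to spare. For the inner integration, with $t$ fixed the map $s \mapsto g\big(\tfrac{x_i(t)-\alpha}{2} s + \tfrac{x_i(t)+\alpha}{2}, \, y_i(t)\big) \cdot \tfrac{x_i(t)-\alpha}{2}$ is a polynomial in $s$ of degree at most $p \leq 2M$, so the $(M+1)$-point rule (exact to degree $2M+1$) reproduces $G(x_i(t), y_i(t))$ exactly for every $t$, in particular at the outer nodes $t_j$. For the outer integration, the integrand $t \mapsto G(x_i(t), y_i(t)) \, y_i'(t)$ is a polynomial of degree at most $(p+q)+1 \leq 2M+1$ in $t$, since $G$ raises the $x$-degree by one and $x_i, y_i$ are affine. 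Thus the same Gauss rule integrates it exactly. The subtle point I expect to be the main obstacle is exactly this tracking of the $t$-degree: the extra Jacobian factor pushes it from $2M$ up to $2M+1$, so the degree of exactness $2M+1$ is consumed sharply, and one must confirm that both one-dimensional quadratures act exactly on the full (jointly polynomial) integrand before concluding.
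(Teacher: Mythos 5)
Your proof is correct and follows essentially the same route as the paper's: reduce the area integral to a boundary integral of an $x$-primitive via Green's formula, then apply nested $(M+1)$-point Gauss--Legendre rules, checking that the outer integrand has degree at most $2M+1$ and the inner one at most $2M$. Your additions (reduction to monomials, independence of the choice of $\alpha$, and the explicit sharp degree count) merely make explicit what the paper's sketch leaves implicit.
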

\begin{proof}
We here review the main idea of the proof (see \cite{vianello1}, Theorem 2.1) since it is useful in order to understand the extension on curved polygons of Section \ref{sub:4.2}.
Let $p(x, \, y)$ be a polynomial of degree $2M$.
For a suitable convex combination $\alpha$ of the $x_i$'s coordinates, let  
\begin{equation}
\label{eq:vianelloo}
\mathcal{P}(x, \, y) := \int_{\alpha}^x p(t, \, y) \,{\rm d}t
\end{equation}
be an $x$-primitive of the function $p(x, \, y)$, so that by Green's formula it holds 
\begin{equation}
\label{eq:vianello0}
\iint_{E} p(x, \, y) \, {\rm d}x  \, {\rm d}y =
\int_{\partial E} \mathcal{P}(x, \, y) \, {\rm d}y = 
\sum_{e_i \in \partial E}
\int_{e_i}  \mathcal{P}(x, \, y) \, {\rm d}y \,.
\end{equation}
This transforms a $2$-dimensional integration problem into a $1$-dimensional problem.
Now, we observe that being $p(x,\, y)$ a polynomial of degree at most $2M$, 
$\mathcal{P}(x, \, y)$ is a polynomial of degree at most $2M+1$, so that using the Gauss-Legendre quadrature rule of degree $2M+1$ we obtain
\begin{equation}
\label{eq:vianello1}
\int_{e_i}  \mathcal{P}(x, \, y) \, {\rm d}y =
\sum_{j = 1}^{M+1} \frac{y_{i+1} - y_i}{2} \mathcal{P}(x_i(t_j), \, y_i(t_j)) \, \nu_j \,.
\end{equation}
We notice now that $\mathcal{P}(x_i(t_j), \, y_i(t_j))$ is unknown, but by definition \eqref{eq:vianelloo}, it can be calculated by a Gauss-Legendre quadrature of degree $2M+1$ on the interval $[\alpha, \, x_i(t_j)]$:
\begin{equation}
\label{eq:vianello2}
\mathcal{P}(x_i(t_j), \, y_i(t_j)) = 
\int_{\alpha}^{x_i(t_j)} p(t, \, y) \,{\rm d}t
=
\sum_{m = 1}^{M+1} \frac{x_i(t_j) - \alpha}{2} p(x_i(t_{j,m}), \, y_i(t_j)) \, \nu_m \,.
\end{equation}
Collecting \eqref{eq:vianello1} and \eqref{eq:vianello2} in \eqref{eq:vianello0} we get the thesis.
\end{proof}

A careful inspection of Proposition \ref{prp:vianello} shows that the overall number of quadrature nodes is bounded by $(M+1)^2\,N_E$. 
Finer estimate of the total amount of quadrature points is given in \cite{vianello1}. Moreover in \cite{vianello4, vianellocodici} the same authors present a compression procedure to reduce the number of nodes in the numerical quadrature. 
A careful inspection of \cite{vianello4} shows that in general the expected compression ratio is
\[
\frac{\text{\texttt{n nodes with compression}}}
{\text{\texttt{n nodes without compression}}}
= \frac{2 M + 1}{N_E \, M}\,.
\]
The suggested way to extract the nodes
is to solve a Non Negative Least Squares problem which ensures positivity of the weights. The compression procedure turns out to be very convenient if the number of edges $N_E$ of the polygon is large.

\begin{remark}
A possible drawback of the quadrature formula
is that the quadrature nodes may
fall  outside of the polygon $E$. This
is the reason why in Proposition \ref{prp:vianello} $g$ is assumed to be continuous and computable also in a rectangle $\mathcal{R}$ containing $E$.
However, this effect can be eliminated in certain cases (e.g. $E$ convex) with a clever selection of the ``integration line'' $l$ (in the proof of Proposition \ref{prp:vianello} $l: x= \alpha$), for instance
considering $l$  as the line containing  the diameter  of the polygon and then using a change of variables.
See Remark 2.4 in \cite{vianello1} for a more detailed discussion.
\end{remark}

\begin{remark}
The quadrature formula in Proposition \ref{prp:vianello} avoids explicit a priori partition of the polygonal
domain $E$ into triangles or quadrangles, since
Green's formula needs only the boundary as a counterclockwise sequence of
vertexes.
\end{remark}

\subsection{Gauss quadrature over curvilinear polygons}
\label{sub:4.2}

Let $E \in \Omega_h$ be a curved polygon as in Figure \ref{fig:vianello0}. The main goal of the section is to define suitable approximation of the integrals \eqref{eq:integral2} and \eqref{eq:integral3}. The idea is to extend the quadrature formula of Proposition \ref{prp:vianello} to curved elements. We observe that by \eqref{eq:integral2} the formula has to be exact (ideally) for polynomials of degree $2k-2$. So that, let $p(x, \, y) \in \Pk_{2k -2}(E)$ and let
\begin{equation}
\label{eq:vianello10}
\mathcal{P}(x, \, y)   = 
\int_{\alpha}^x  p(t, \, y) \, {\rm d}t
\end{equation}
where, using the notation in Figure \ref{fig:vianello0}, we pick
$\alpha := \frac{1}{N_E} \sum_{i=1}^{N_E} x_i$.
Using the same machinery of the proof of Proposition \ref{prp:vianello} we get
\begin{equation}
\label{eq:vianello11}
\iint_E p(x, \, y) \, {\rm d}x\, {\rm d}y =
\int_{\partial E} \mathcal{P}(x, \, y)\, {\rm d}y = 
\int_{e_1} \mathcal{P}(x, \, y)\, {\rm d}y + 
\sum_{i=2}^{N_E} \int_{e_i} \mathcal{P}(x, y)\, {\rm d}y \,.
\end{equation}
On the straight sides we can proceed as before setting $M= k-1$.
Let us analyse the integral involving the curved edge $e_1$.
Let $\gamma \colon I_{e_1} \to e_1$ described by the components $\gamma(t) := (\gamma_1(t), \, {\gamma_2}(t))$ and let 
$(\widetilde{x_1}(t_j), \, \widetilde{y_1}(t_j))$ for $j=1, \dots k$,
be the images through $\gamma$ of the $k$ nodes of the  Gauss-Legendre quadrature on $I_{e_1}$ of degree $2k-1$ (see Figure \ref{fig:vianello1}). Then we set
\begin{equation}
\label{eq:vianello12}
\begin{split}
\int_{e_1} \mathcal{P}(x, \, y)\, {\rm d}y &=
\int_{I_e} \mathcal{P}(\gamma_1(s), \, \gamma_2(s))\, \gamma_2'(s) \, {\rm d}s 
\approx
\frac{|I_{e_1} |}{2} \, \sum_{j=1}^{k} \mathcal{P} \left(\widetilde{x_1}(t_j), \widetilde{y_1}(t_j) \right) \,  \gamma_2'(t_j)  \,\nu_j \,
\end{split} 
\end{equation}
where $\{ \nu_j \}_{j=1}^{k}$ are the weights of the Gauss-Legendre formula on $[-1, 1]$.
\begin{figure}[!h]
\center{
\begin{overpic}[scale=0.30]{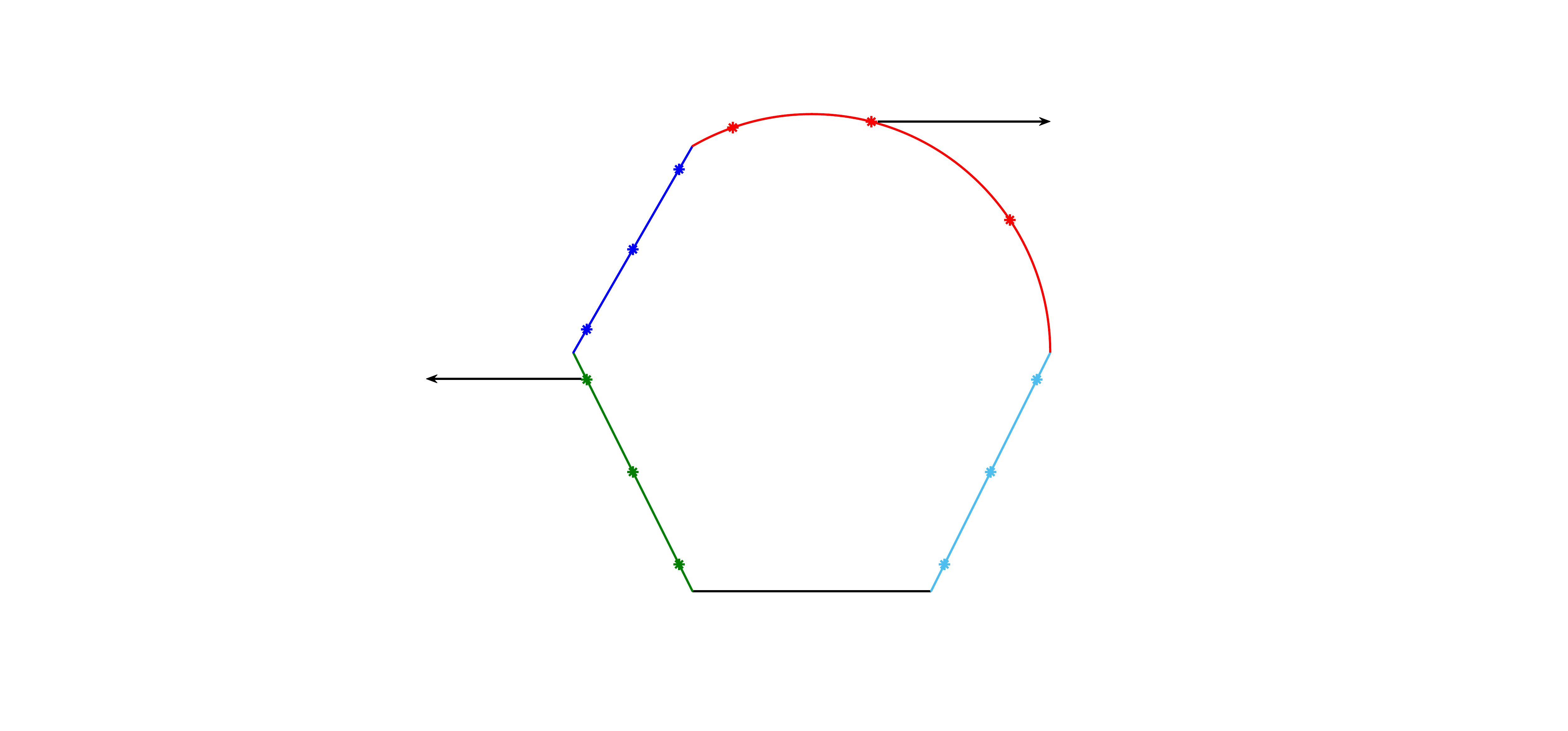}
\put (50,25) {{$E$}}
\put (65,35) {{$e_1$}}
\put (37,31) {{$e_2$}}
\put (37,15) {{$e_3$}}
\put (50,6) {{$e_4$}}
\put (65,15) {{$e_5$}}
\put (67,38) {{$\left(\widetilde{x_1}(t_2), \, \widetilde{y_1}(t_2)\right)$}}
\put (12,22) {{$\left(x_3(t_1), \, y_3(t_1)\right)$}}
\end{overpic}
\vspace*{-1cm}
\caption{Example of distribution of Gauss-Legendre points on the boundary ($k=3$) for a curved domain $E$.}
\label{fig:vianello1}
}
\end{figure}
Using again the Gauss-Legendre quadrature formula  on the interval $[\alpha, \, \widetilde{x_1}(t_j)]$, by \eqref{eq:vianello10} (see Figure \ref{fig:vianello2}), we obtain:
\begin{equation}
\label{eq:vianello13}
\begin{split}
\mathcal{P}  \left(\widetilde{x_1}(t_j), \widetilde{y_1}(t_j) \right) 
=
\frac{\widetilde{x_1}(t_j) - \alpha}{2} \,
\sum_{m=1}^{k+1} \,  p\left(\widetilde{x_1}(t_{j,m}), \,\widetilde{y_1}(t_j) \right) \, \nu_m\, .
\end{split}
\end{equation}
%
%
%
%
%
\begin{figure}[!h]
\center{
\begin{overpic}[scale=0.30]{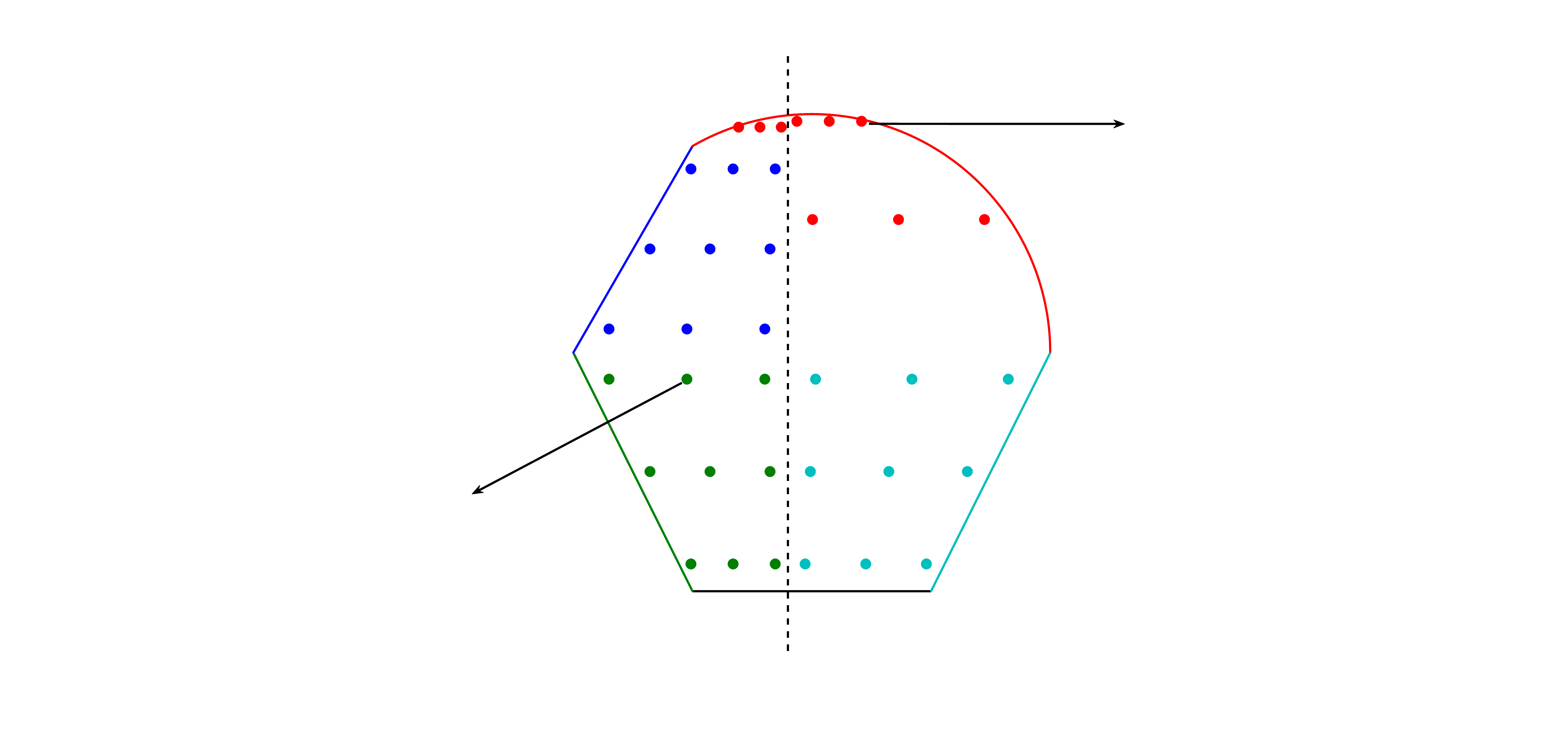}
\put (47.5,45) {{$x = \bar{x}$}}
\put (50,25) {{$E$}}
\put (65,35) {{$e_1$}}
\put (37,31) {{$e_2$}}
\put (37,15) {{$e_3$}}
\put (50,6) {{$e_4$}}
\put (65,15) {{$e_5$}}
\put (73,38) {{$\left(\widetilde{x_1}(t_{2,1}), \, \widetilde{y_1}(t_2)\right)$}}
\put (20,12) {{$\left(x_3(t_{1,2}), \, y_3(t_1)\right)$}}
\end{overpic}
\vspace*{-1cm}
\caption{Example of distribution of quadrature points ($k=3$) for a curved domain $E$.}
\label{fig:vianello2}
}
\end{figure}
Collecting \eqref{eq:vianello12} and \eqref{eq:vianello13} in \eqref{eq:vianello11}, and in the light of the notation of Proposition \ref{prp:vianello}, we obtain  the following quadrature formula on the curved polygon $E$.
Let $g \in C(\mathcal{R})$ where $\mathcal{R}$ is a rectangle containing $E$.
Then the following quadrature formula extends to curved polygon $E$ the rule in Proposition \ref{prp:vianello}
\begin{multline}
\label{eq:vianellocurvo}
\iint_{E} g(x, \, y) \, {\rm d}x  \, {\rm d}y 
\approx
\frac{I_{e_1}}{4} \sum_{j,m = 1}^{k}
g(\widetilde{x_1}(t_{j,m}), \, \widetilde{y_1}(t_{j})) \, \gamma_2'(t_j) 
\, (\widetilde{x_1}(t_j) - \alpha) \,
\nu_j \, \nu_m\,
+ \\
+
\sum_{i=2}^{N_E}
\sum_{j,m = 1}^{k}
g(x_i(t_{j,m}), \, y_i(t_{j})) \, w_{ijm}
\end{multline}
Finally, as for the straight case, we can apply the compression procedure (see Figure \ref{fig:vianello3}).
\begin{figure}[!h]
\center{
\begin{overpic}[scale=0.30]{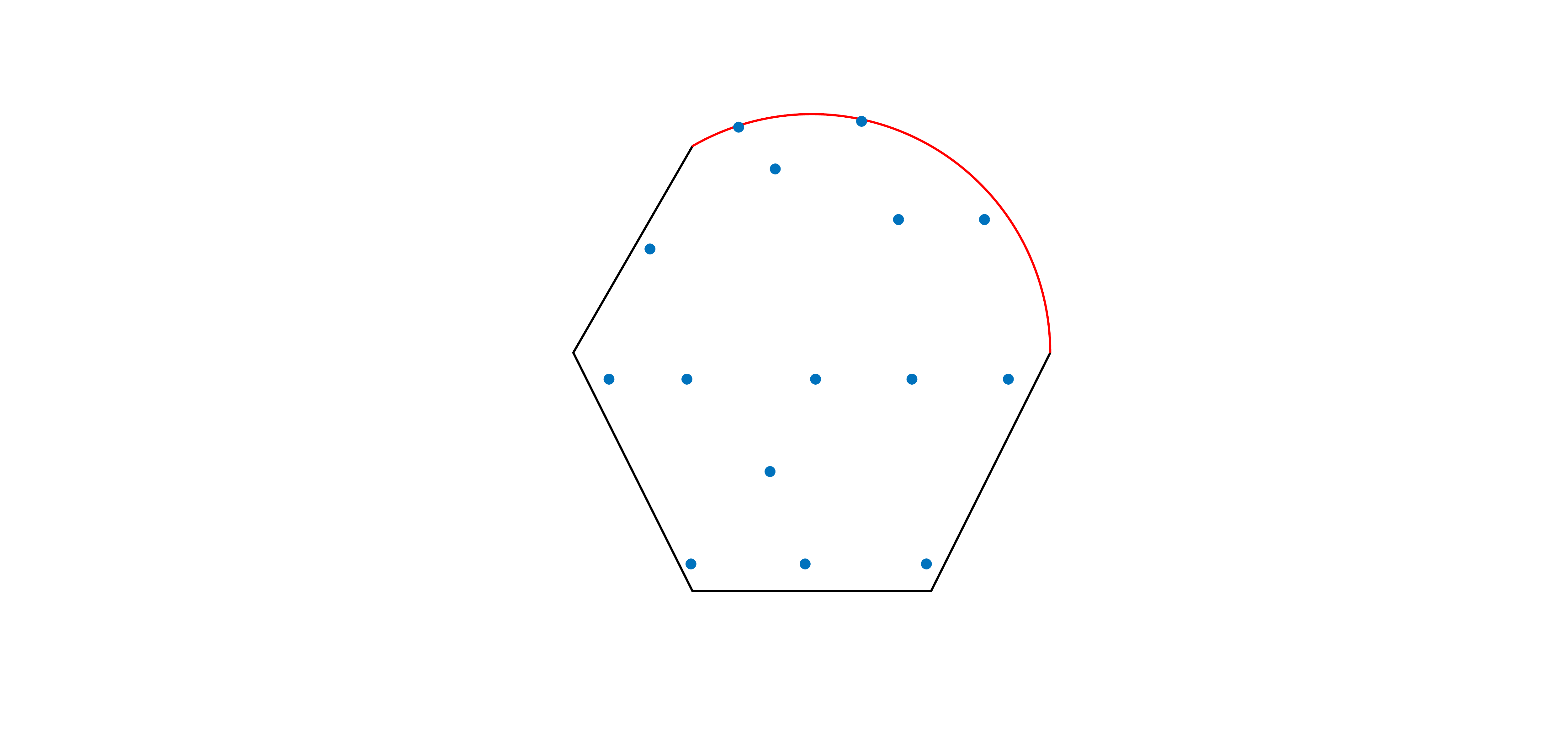}
\put (50,25) {{$E$}}
\put (65,35) {{$e_1$}}
\put (37,31) {{$e_2$}}
\put (37,15) {{$e_3$}}
\put (50,6) {{$e_4$}}
\put (65,15) {{$e_5$}}
\end{overpic}
\vspace*{-1cm}
\caption{Example of distribution of quadrature points ($k=3$) for a curved domain $E$ after the compression procedure.}
\label{fig:vianello3}
}
\end{figure}
%

\begin{remark}
It is important to note that the accuracy order of the quadrature formulas above is tailored to obtain, in  straight edge case, an accuracy order of $2k-2$. On the other hand, in the presence of curved edges, such formulas will not be exact even for polynomials of the indicated degree. Nevertheless such observation does not seem to affect the numerical results; in the numerical tests of the next section we indeed used the formulas above without any loss in accuracy due to numerical integration. Clearly, if one wants to stay on a safer side, one can opt for integration formulas of the same type but of higher accuracy order, at the expense of some computational cost.

An important related note is the following. From the coding standpoint the present method for curved faces can be easily obtained by a direct modification of the original VEM method for straight edges. Essentially, one keeps the same structure and simply modifies the integration formulas in order to keep into account that the volume (and some edge) integrals are now on curved domains. Thus the code can be built rather easily combining the guidelines in \cite{VEM-hitchhikers} with the integration formulas above. This note is important in order to extend (in a practical way) to curved faces any of the many Virtual Element Methods, for more complex problems, that are in the literature.
\end{remark}

\section{Numerical Tests}
\label{sec:tests}

In this section we present two numerical experiments to test the practical performance of the method. 
In Test \ref{test1} we study the convergence of the proposed method. 
We also check numerically that, as expected, an approximation of the
domain with ``straight VEM'' polygons, leads, for $k \geq 2$, to a sub-optimal rate of convergence.
In Test \ref{test2} we assess the proposed virtual element method for curved domains with curved interface inside the domains.
In order to compute the VEM errors between the exact solution $u_{\rm ex}$ and VEM solution $u_h$, we consider the computable error quantities:
\begin{gather*}
{\rm err}_{H^1} := 
\frac{\left( \sum_{E \in \Omega_h} \left \| \nabla u_{\rm ex} -  \nabla ( \Pi_{k}^{\nabla, E} u_h) \right \|_{0,E}^2 \right)^{1/2}}{|u_{\rm ex}|_1} \,,  \\
{\rm err}_{L^2} := 
\frac{\left( \sum_{E \in \Omega_h} \left \|  u_{\rm ex} -  \Pi_{k}^{\nabla, E} u_h \right \|_{0,E}^2 \right)^{1/2}}{\|u_{\rm ex}\|_0} \,.
\end{gather*}
%
The $H^1$-error confirms the theoretical predictions of Section \ref{sub:convergence}, that is an $O(h^k)$ convergence.  
For what concerns the $L^2$-error, the behaviour is again the expected one $O(h^{k+1})$. Although we did not prove the $L^2$-error estimate in the present paper, this could be derived combining the tools here presented with Section 2.7 in \cite{BeiraodaVeiga-Brezzi-Marini:2013}.

\begin{test}
\label{test1}
We consider the curved domain $\Omega$ described by
\begin{equation}
\label{eq:test1domain}
\Omega := \{ (x, \, y) \quad \text{s.t} \quad 0 \leq x \leq 1, \quad \text{and} \quad g_1(x) \leq y \leq g_2(x) \} \,,
\end{equation}
where 
\[
g_1(x) := \frac{1}{20} \sin(\pi x) 
\qquad \text{and} \qquad
g_2(x) := 1 + \frac{1}{20} \sin(3\pi x) \,
\]
see also Figure \ref{fig:test1domain}. We assume that the curved edges $\Gamma_1$ and $\Gamma_2$ are parametrized with the standard graph parametrization, i.e.
\[
\begin{aligned}
\gamma_1 &\colon [0, \, 1] \to \Gamma_1 
&\qquad \gamma_1(t)& = \left(t, \,  \frac{1}{20} \sin(\pi t)\right) ,
\\
\gamma_2 &\colon [0, \, 1] \to \Gamma_2 
&\qquad \gamma_2(t)& = \left(t, \,  1 + \frac{1}{20} \sin(3 \pi t)\right) ,
\end{aligned}
\]
we notice that both $\Gamma_1$ and $\Gamma_2$ are not parametrized by the arc-length parametrizations.
\begin{figure}[!h]
\center{
\begin{overpic}[scale=0.28]{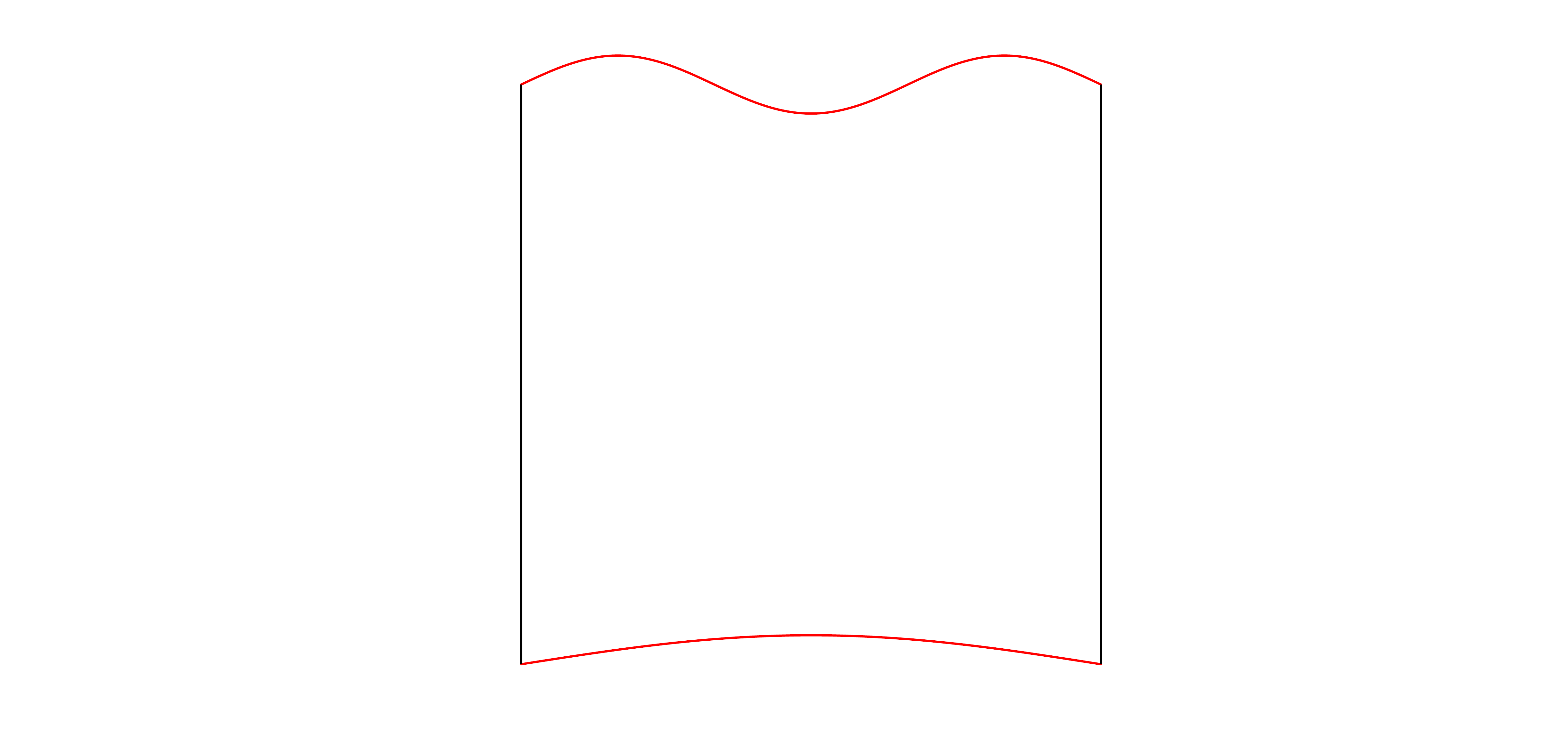}
\put (50,25) {{$\Omega$}}
\put (50,43) {{$\Gamma_2$}}
\put (50,4) {{$\Gamma_1$}}
\put (27.5,4) {{$(0,0)$}}
\put (27.5,43) {{$(0,1)$}}
\put (70.5,4) {{$(1,0)$}}
\put (70.5,43) {{$(1,1)$}}
\end{overpic}
\vspace*{-0.5cm}
\caption{Test \ref{test1}. Domain $\Omega$ described in \eqref{eq:test1domain}.}
\label{fig:test1domain}
}
\end{figure}
In this example we test the elliptic problem \eqref{eq:continua} on the curved domain $\Omega$.  We choose the load term $f$ and the boundary conditions in such a way that the analytical solution is  
\begin{equation}
\label{eq:test1solution}
u_{\rm ex}(x, \, y) = -(y - g_1(x))(y - g_2(x)) (3 + \sin(5x) \sin(7y)).
\end{equation}
In particular, from \eqref{eq:test1solution}, it yields that 
\begin{equation}
\label{eq:test1bc}
{u_{\rm ex}}_{|{\Gamma_1 \cup \Gamma_2}} =0 \,.
\end{equation}
The aim of this test is to check the actual performance of the virtual element method for curved domains, in comparison with the standard virtual element on straight domains obtained by approximating the curved boundary by straight segments.
The domain $\Omega$ is partitioned with two  sequences of polygonal meshes: the Voronoi meshes and uniform quadrilateral meshes. Both sequences of meshes are obtained defining a sequence of Voronoi (resp. uniform) meshes on the square $Q:= [0, \, 1]^2$ and then moving the vertexes  of each polygon by the rule
\[
(x_Q, \, y_Q) \mapsto (x_{\Omega}, \, y_{\Omega}) :=
\left\{
\begin{aligned}
&  \bigl(x_Q, \, (1 - 2g_1(x_Q)) y_Q + g_1(x_Q) \bigr) \qquad &  
\text{if $y_Q \leq \frac{1}{2}$,}\\
& \left(x_Q, \, (2g_2(x_Q) - 1) y_Q + \frac{1}{2} \right) \qquad &  
\text{if $y_Q \geq \frac{1}{2}$,}
\end{aligned}
\right.
\] 
where $(x_Q, \, y_Q)$ and $(x_{\Omega}, \, y_{\Omega})$ denote respectively the mesh nodes on the square domain $Q$ and on the curved domain $\Omega$. The edges on the  curved  boundary consist of an arc of $\Gamma_1$ or $\Gamma_2$.
In Figure \ref{fig:test1meshes} we display an example of the adopted meshes.
\begin{figure}[!h]
\center{
\includegraphics[scale=0.25]{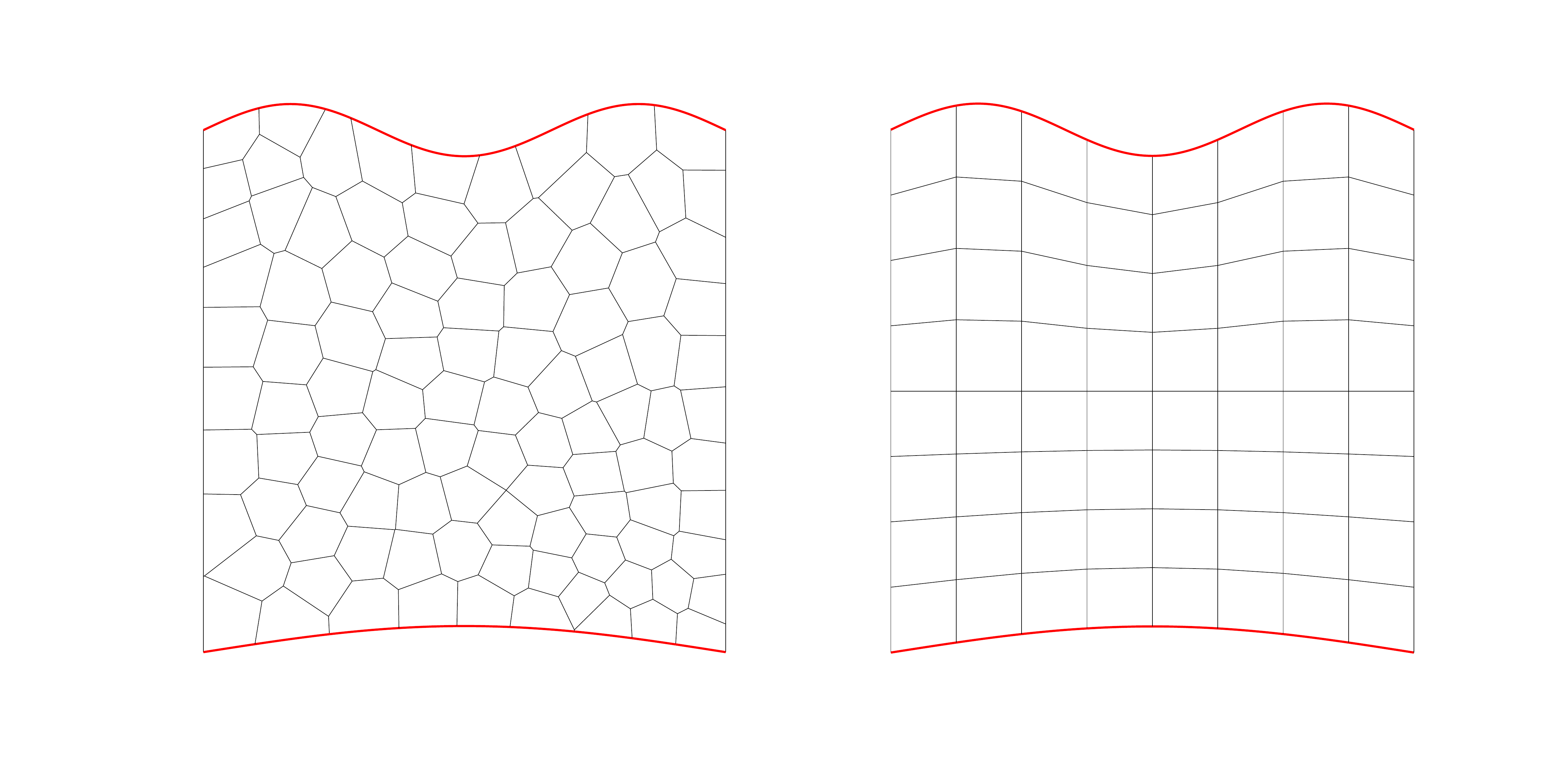} 
\vspace*{-1cm}
\caption{Test \ref{test1}. Example of the adopted curved polygonal meshes: Voronoi mesh on the left and uniform mesh on the right.}
\label{fig:test1meshes}
}
\end{figure}
In Figure \ref{fig:test1-curved-voronoi} we show the results obtained for the sequence of Voronoi meshes and for the sequence of uniform meshes.
We notice that the theoretical predictions of Sections \ref{sec:4} are confirmed. 
\begin{figure}[!h]
\center{
\includegraphics[scale=0.3]{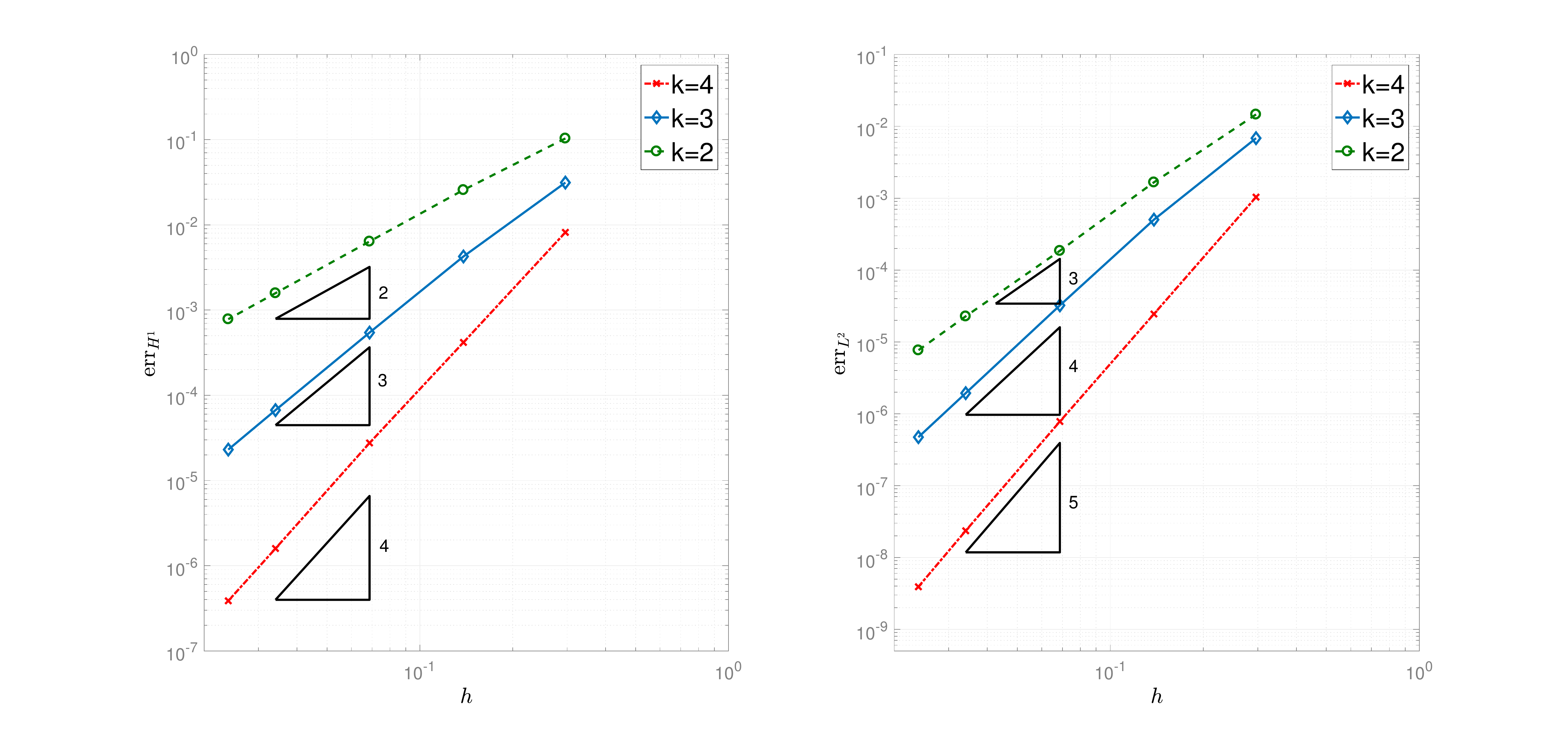} 
\\
\includegraphics[scale=0.3]{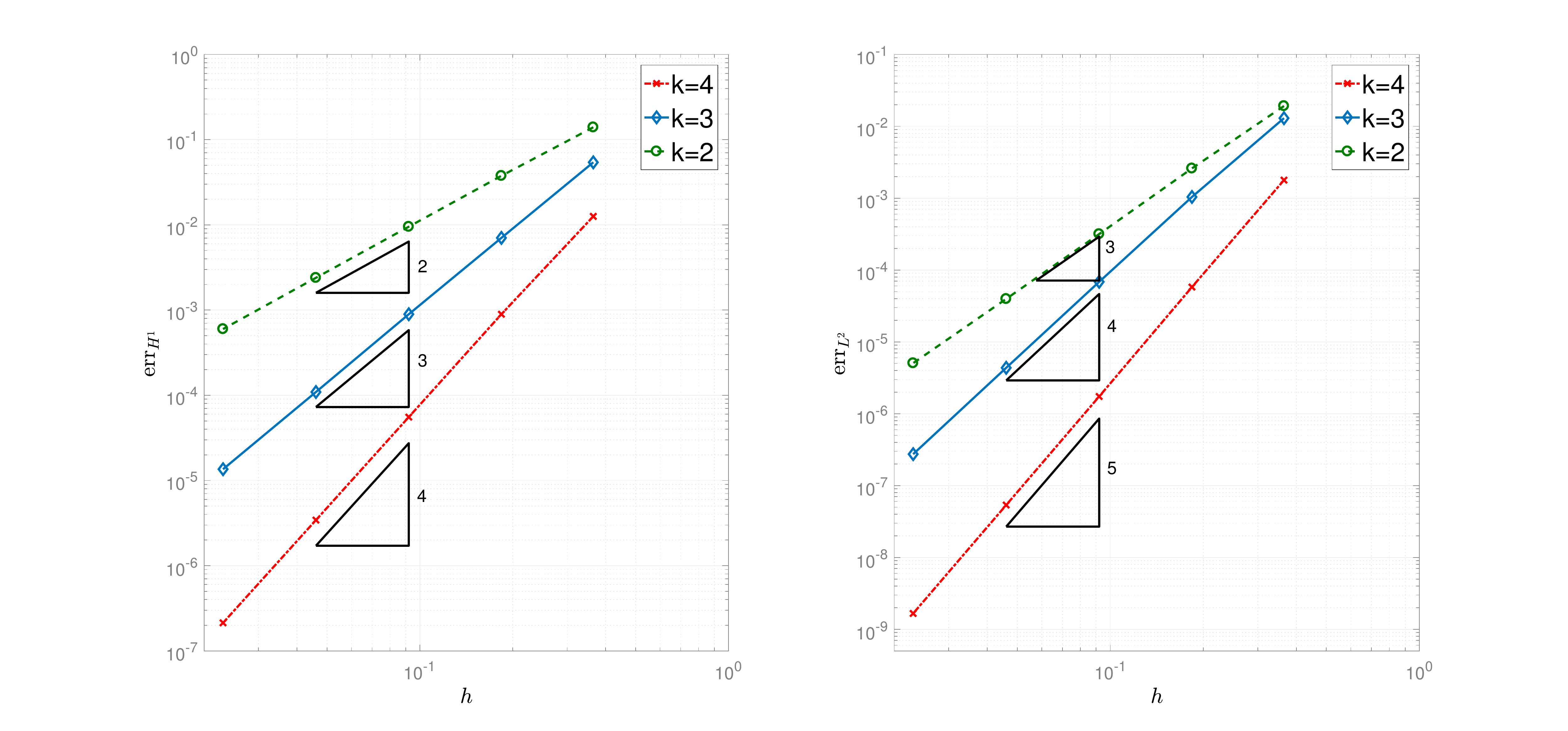} 
\vspace*{-0.5cm}
\caption{Test \ref{test1}. Errors ${\rm err}_{H^1}$ and ${\rm err}_{L^2}$ for the sequence of Voronoi meshes (first row) and for the sequence of uniform meshes (second row).}
\label{fig:test1-curved-voronoi}
}
\end{figure}
Next we approximate the curved domain with a sequence of ``straight polygons'', obtained by approximating the curved boundary by straight segments 
and forcing on these the boundary condition \eqref{eq:test1bc}
(see Figure \ref{fig:test1meshes-straight}).
\begin{figure}[!h]
\center{
\includegraphics[scale=0.25]{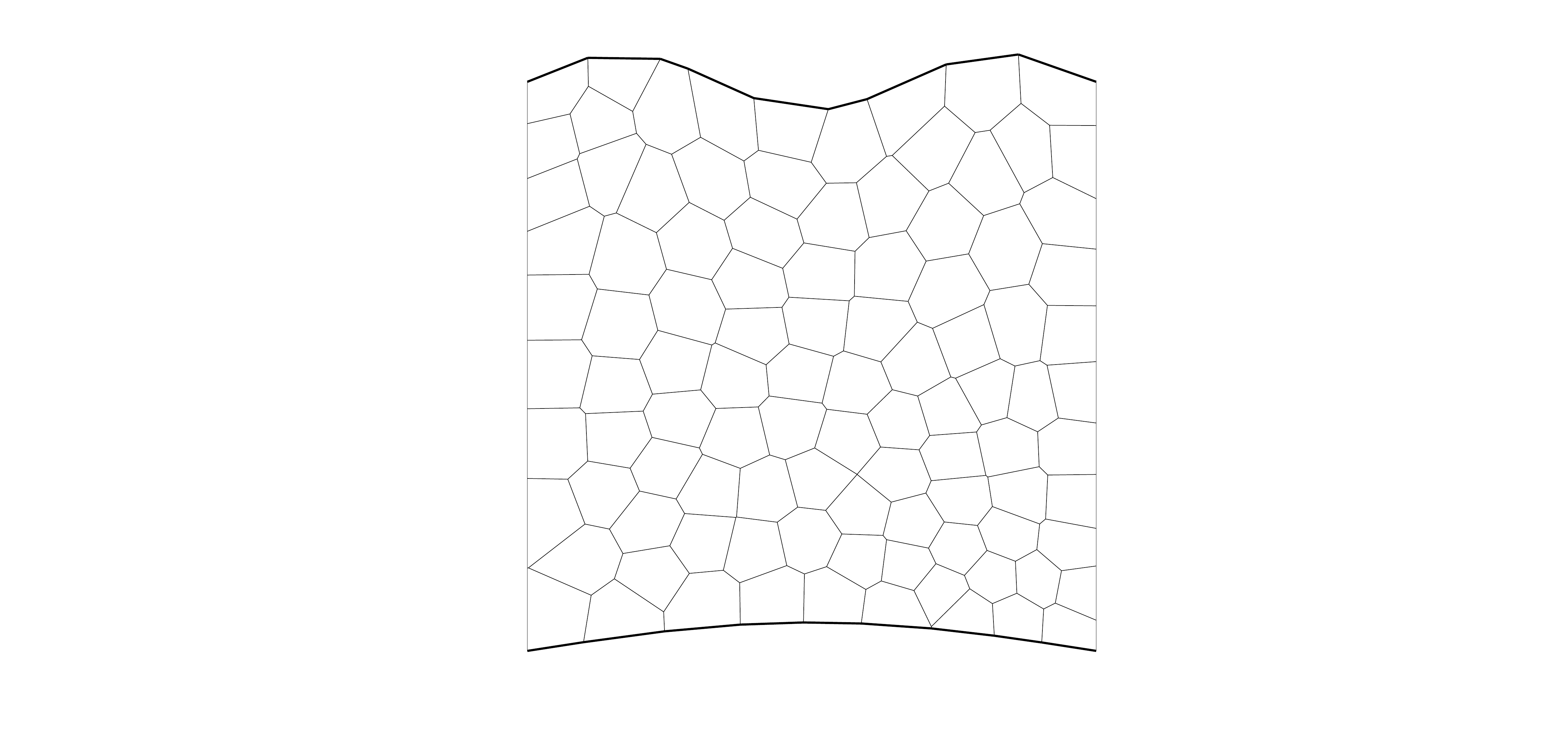} 
\vspace*{-0.5cm}
\caption{Test \ref{test1}. Example of the adopted straight polygonal meshes: Voronoi mesh.}
\label{fig:test1meshes-straight}
}
\end{figure}
In Figure \ref{fig:test1-straight-voronoi} we plot the results for the sequence of Voronoi meshes on the approximated domain,  obtained with the standard VEM on polygons.
\begin{figure}[!h]
\center{
\includegraphics[scale=0.3]{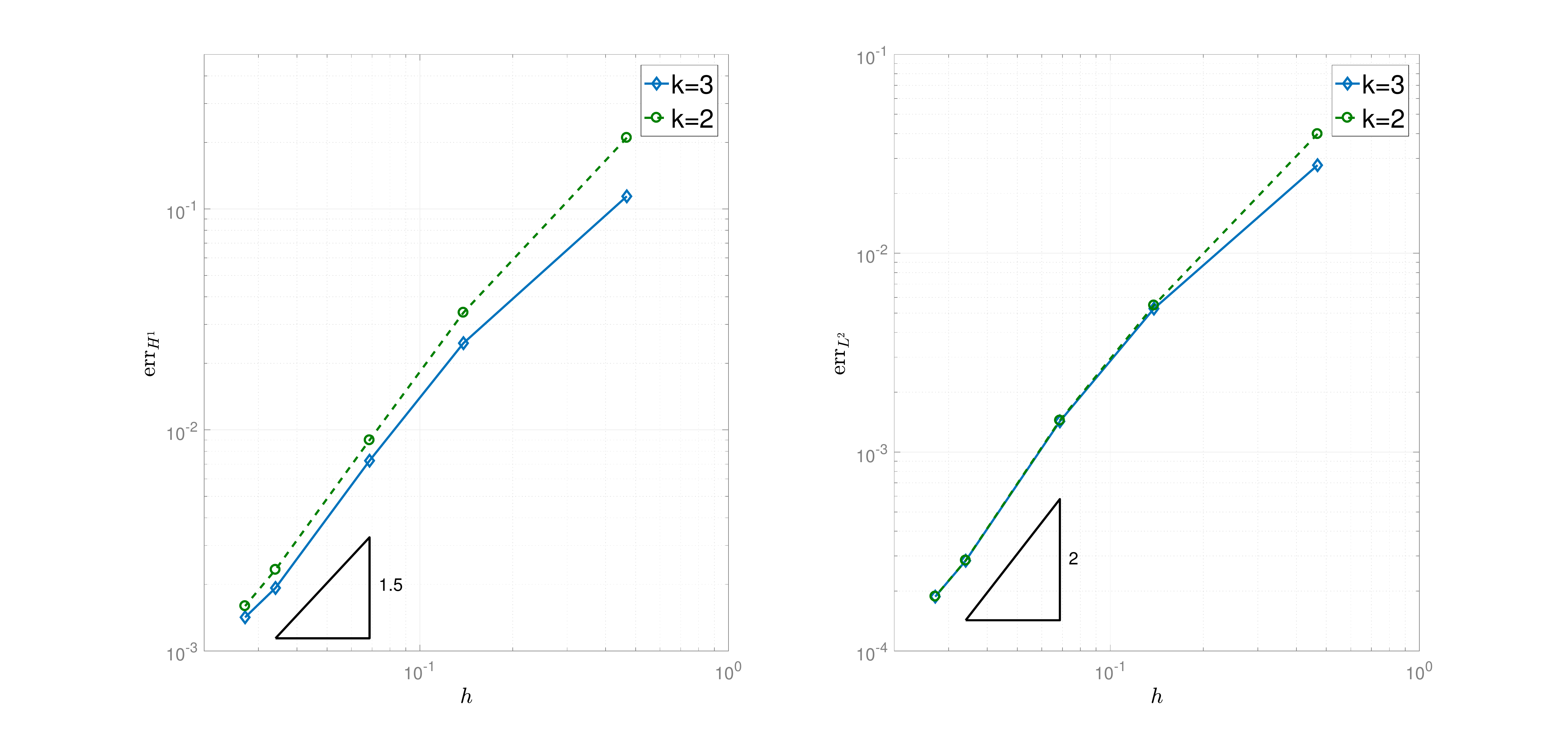} 
\vspace{-0.5cm}
\caption{Test \ref{test1}. Errors ${\rm err}_{H^1}$ and ${\rm err}_{L^2}$ for the sequence of ``straight'' Voronoi meshes.}
\label{fig:test1-straight-voronoi}
}
\end{figure}
As expected, the scheme obtained by the approximation of the domain with straight edge polygons clearly suffers from an evident sub-optimality of the convergence rates.
\end{test}


\begin{test}
\label{test2}
The aim of the present test is to show the good performance of the method also in case of a 
curved domain with fixed curved interface inside the domain.
Let $\Omega := \Omega_1 \cup \Omega_2$, where $\Omega_2$ is the disk centered in $(0,\, 0)$ with radius $1/2$ and $\Omega_1$ is the outer annulus of radius 1 (see Figure \ref{fig:test2domain}).
Let $\Gamma_1$ and $\Gamma_2$ be the circumferences centered in the origin with radius respectively 1 and $1/2$ parametrized by the arc-length parametrizations
\[
\begin{aligned}
\gamma_1 &\colon [0, \, 2 \pi] \to \Gamma_1 
&\qquad \gamma_1(t)& = \left(\cos{t}, \,  \sin{t}\right) ,
\\
\gamma_2 &\colon [0, \, \pi] \to \Gamma_2 
&\qquad \gamma_2(t)& = \frac{1}{2}\left(\cos{2t}, \,  \sin{2t}\right) \,.
\end{aligned}
\]

\begin{figure}[!h]
\center{
%
\begin{overpic}[scale=0.8]{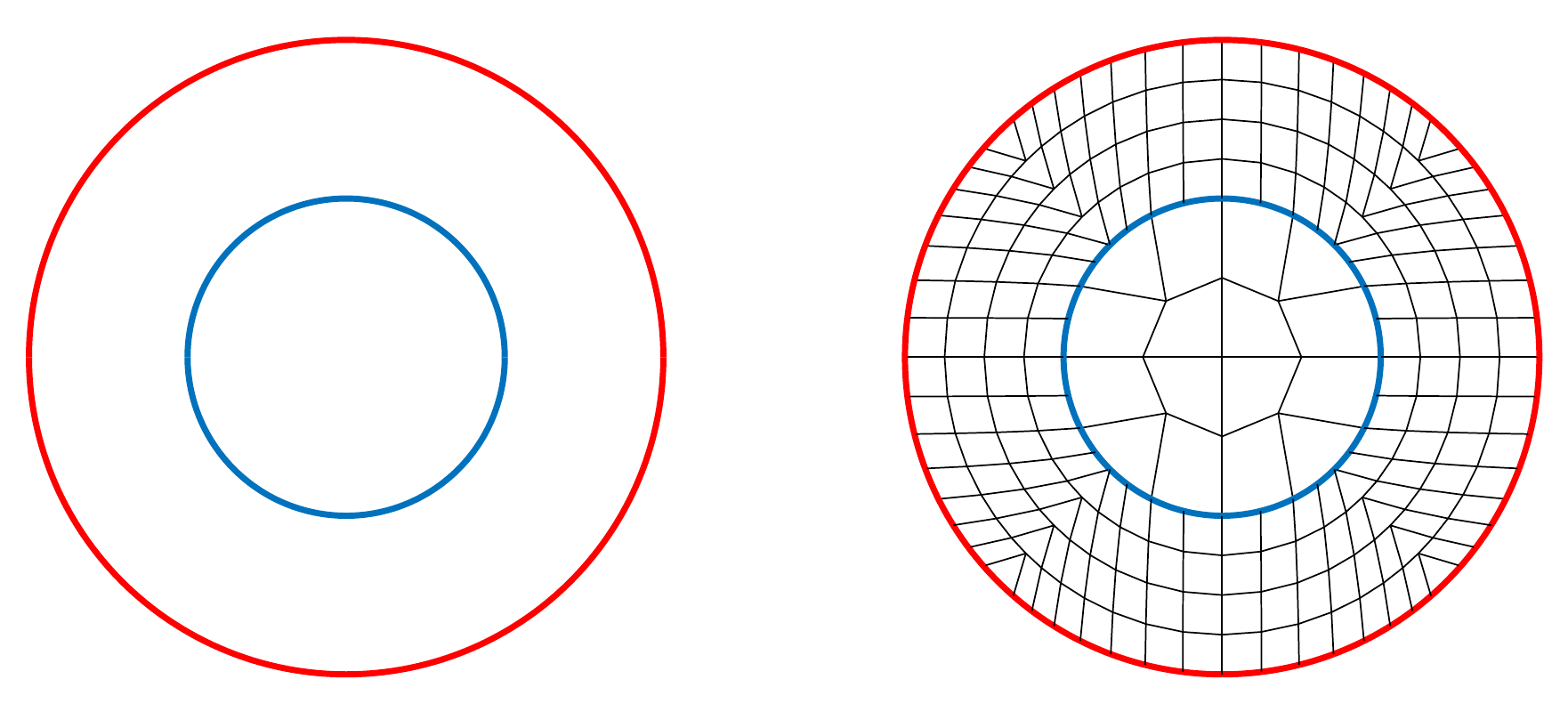}
\put (21,24) {{$\Omega_2$}}
\put (21,9) {{$\Omega_1$}}
\put (30.5,30) {{$\Gamma_2$}}
\put (0.5,15) {{$\Gamma_1$}}
\end{overpic}
\caption{Test \ref{test2}. On the left $\Omega =  \Omega_1 \cup \Omega_2$, curved boundary $\Gamma_1 = \partial \Omega$ and curved interface $\Gamma_2 = \partial \Omega_2$.
On the right example of decomposition of the domain $\Omega$ that matches the curved interface of $\Omega_1$ and $\Omega_2$}
\label{fig:test2domain}
}
\end{figure}
\noindent
We consider the elliptic problem
\[
\left \{
\begin{aligned}
 -{\rm div} \left( \kappa \nabla u \right) &= f &\qquad &\text{in $\Omega$,} \\
 u &= 0 &\qquad &\text{on $\Gamma_1$,} \\
\end{aligned}
\right.
\]
where the viscosity $\kappa$ and the load $f$ are defined by
\[
\left \{
\begin{aligned}
\kappa &= 5 &\qquad &\text{in $\Omega_1$,} \\
\kappa &= 1 &\qquad &\text{on $\Omega_2$,} \\
\end{aligned}
\right.
\qquad
\text{and}
\qquad
\left \{
\begin{aligned}
f &= 1 &\qquad &\text{in $\Omega_1$,} \\
f &= 5 &\qquad &\text{on $\Omega_2$.} \\
\end{aligned}
\right.
\]
Due to the jump in the viscosity and in the loading term, the exact solution 
\[
u_{\rm ex}(x, \, y) = u_{\rm ex}(r) = 
\left\{
\begin{aligned}
&  -  \frac{1}{20} r^2 - \frac{1}{10} \ln(r) +  \frac{1}{20}
\qquad & \text{in $\Omega_1$,} \\
&  -  \frac{5}{4} r^2 + \frac{7}{20} + \frac{\ln(2)}{10}
\qquad & \text{in $\Omega_2$,}\\
\end{aligned}
\right.
\]
is analytical in both subdomains but is not globally regular in $\Omega$ (see Figure \ref{fig:test2solution}).
In order to obtain a method that converges with optimal order, we need to use  decompositions of the domain $\Omega$ which exactly match the curved interface $\Gamma_2$ between the subdomains $\Omega_1$ and $\Omega_2$ (see Figure \ref{fig:test2domain}).
\begin{figure}[!h]
\center{
\hspace*{-1.8cm}
\begin{overpic}[scale=0.35]{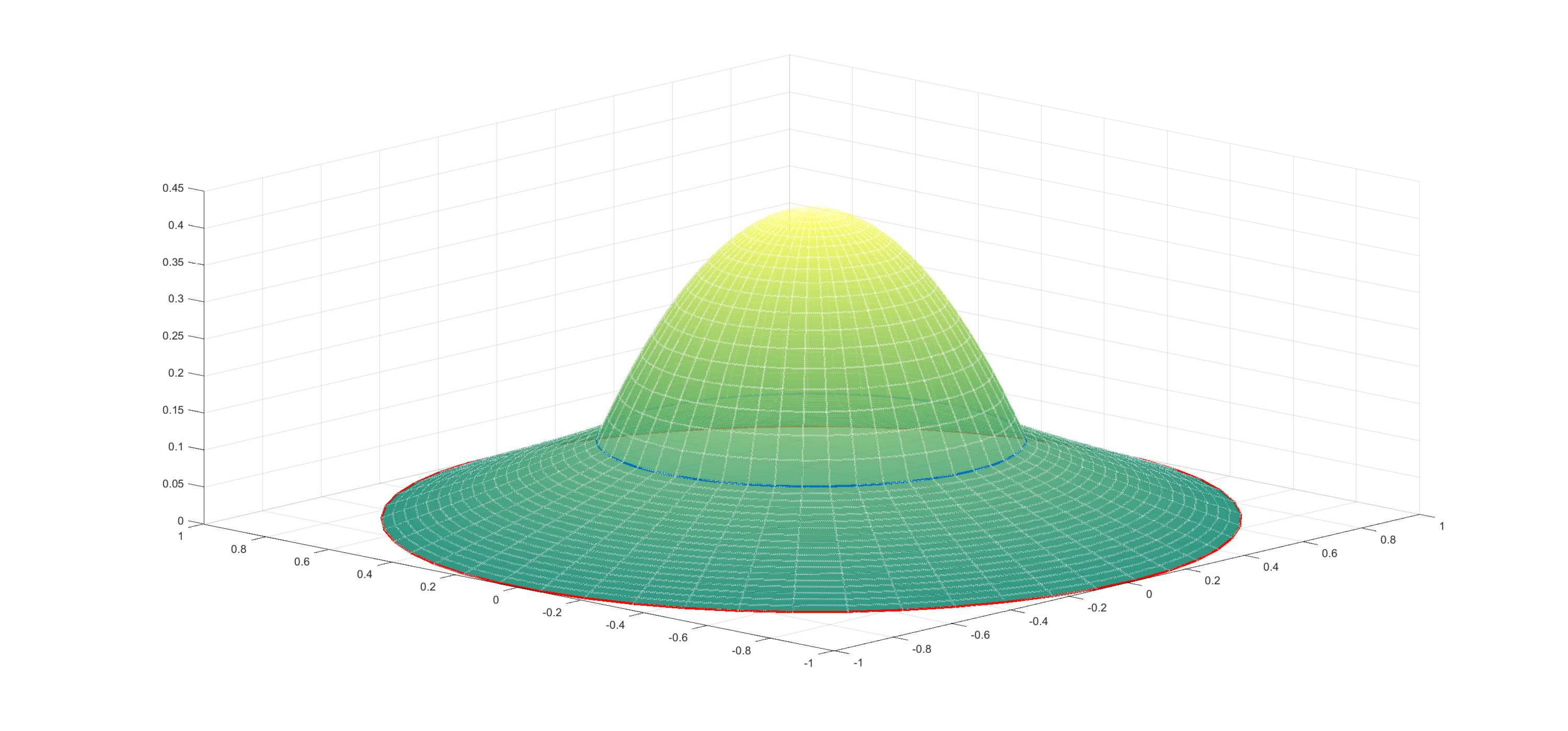}
\put (50,18) {{$\Omega_2$}}
\put (50,12) {{$\Omega_1$}}
\put (36,18) {{$\Gamma_2$}}
\put (24,10) {{$\Gamma_1$}}
\end{overpic}
\vspace*{-1.2cm}
\caption{Test \ref{test2}. Plot of the exact solution $u_{\rm ex}$.}
\label{fig:test2solution}
}
\end{figure}

In Figure \ref{fig:test2-curved} we plot the results obtained with the proposed VEM scheme. We observe that the method exhibits appropriate convergence properties, confirming that the proposed virtual element method 
can automatically handle also domains with internal curved interface.
\begin{figure}[!h]
\center{
\includegraphics[scale=0.3]{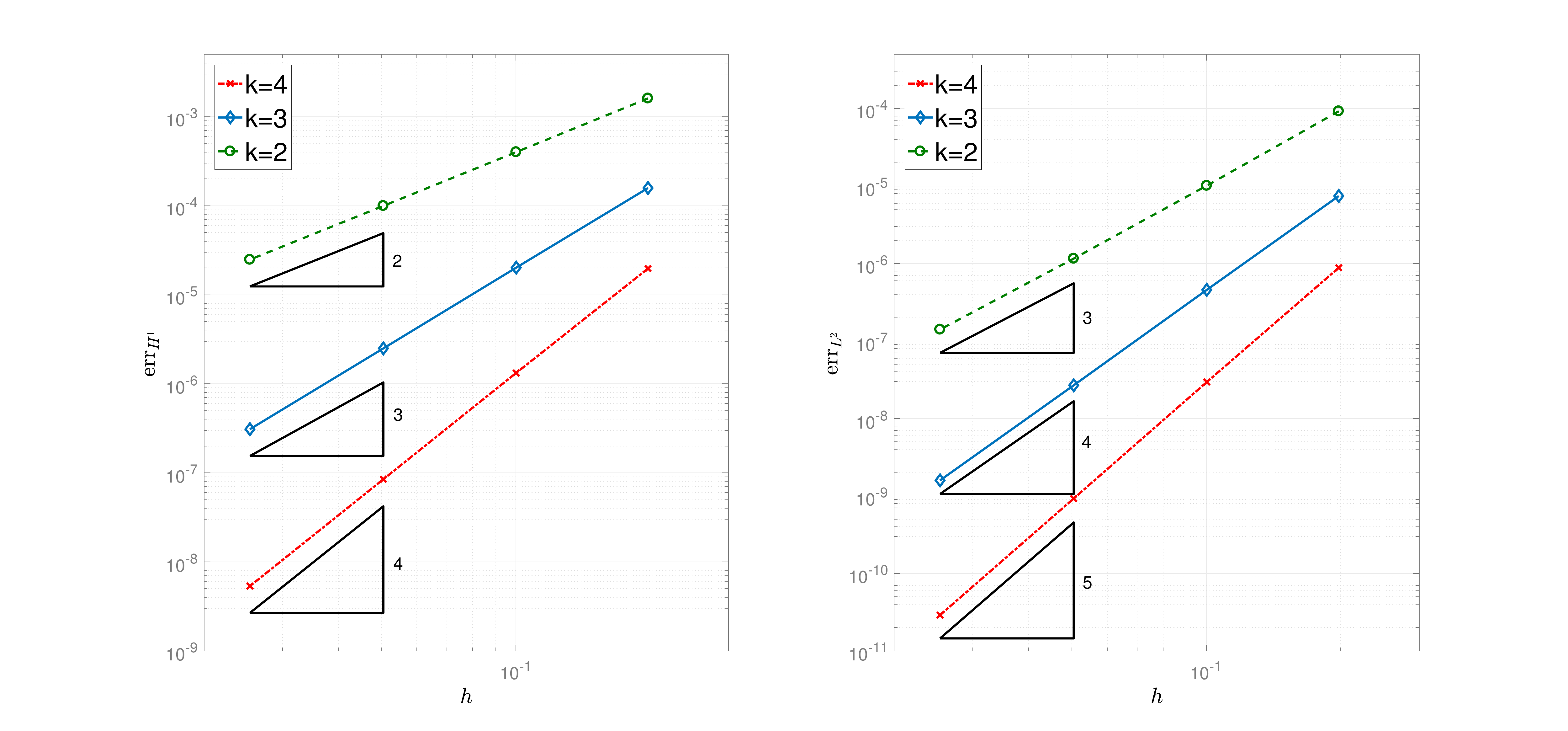} 
\caption{Test \ref{test2}. Errors ${\rm err}_{H^1}$ and ${\rm err}_{L^2}$ for the sequence of meshes matching the interface of $\Omega_1$ and $\Omega_2$.}
\label{fig:test2-curved}
}
\end{figure}

\end{test}

\section{Acknowledgments}
The first and third authors were partially supported by the European Research Council through the H2020
Consolidator Grant (grant no. 681162) CAVE - Challenges and Advancements in Virtual Elements. This support
is gratefully acknowledged.

\addcontentsline{toc}{section}{\refname}
\bibliographystyle{plain}
\bibliography{biblio}

\begin{thebibliography}{10}

\bibitem{Agmon:1965}
S.~Agmon.
\newblock {\em Lectures on elliptic boundary value problems}, volume~2 of {\em
  Van Nostrand Mathematical Studies}.
\newblock D. Van Nostrand Co., Inc., Princeton, N.J.-Toronto-London, 1965.

\bibitem{Ahmed-et-al:2013}
B.~Ahmad, A.~Alsaedi, F.~Brezzi, L.~D. Marini, and A.~Russo.
\newblock Equivalent projectors for virtual element methods.
\newblock {\em Comput. Math. Appl.}, 66(3):376--391, 2013.

\bibitem{mascotto}
P.~F. Antonietti, L.~Mascotto, and M.~Verani.
\newblock A multigrid algorithm for the $p$-version of the virtual element
  method.
\newblock {\em ESAIM Math. Model. Numer. Anal.}, 52(1):337--364, 2018.

\bibitem{Hughes:2006}
Y.~Bazilevs, L.~Beir\~ao~da Veiga, J.~A. Cottrell, T.~J.~R. Hughes, and
  G.~Sangalli.
\newblock Isogeometric analysis: approximation, stability and error estimates
  for {$h$}-refined meshes.
\newblock {\em Math. Models Methods Appl. Sci.}, 16(7):1031--1090, 2006.

\bibitem{BeiraodaVeiga-Brezzi-Marini:2013}
L.~Beir\~ao~da Veiga, F.~Brezzi, and L.~D. Marini.
\newblock Virtual elements for linear elasticity problems.
\newblock {\em SIAM J. Numer. Anal.}, 51(2):794--812, 2013.

\bibitem{volley}
L.~Beir{\~a}o~da Veiga, F.~Brezzi, A.~Cangiani, G.~Manzini, L.~D. Marini, and
  A.~Russo.
\newblock Basic principles of virtual element methods.
\newblock {\em Math. Models Methods Appl. Sci.}, 23(1):199--214, 2013.

\bibitem{VEM-hitchhikers}
L.~Beir{\~a}o~da Veiga, F.~Brezzi, L.~D. Marini, and A.~Russo.
\newblock The {H}itchhiker's {G}uide to the {V}irtual {E}lement {M}ethod.
\newblock {\em Math. Models Methods Appl. Sci.}, 24(8):1541--1573, 2014.

\bibitem{serendipity}
L.~Beir{\~a}o~da Veiga, F.~Brezzi, L.~D. Marini, and A~Russo.
\newblock Serendipity nodal vem spaces.
\newblock {\em Comput. Fluids}, 141:2--12, 2016.

\bibitem{dassi}
L.~Beir{\~a}o Da~Veiga, F.~Dassi, and A.~Russo.
\newblock High-order {V}irtual {E}lement {M}ethod on polyhedral meshes.
\newblock {\em Comput. Math. Appl.}, 74(5):1110--1122, 2017.

\bibitem{2016stability}
L.~Beir{\~a}o Da~Veiga, C.~Lovadina, and A.~Russo.
\newblock Stability analysis for the virtual element method.
\newblock {\em Math. Models Methods Appl. Sci.}, 27(13):2557--2594, 2017.

\bibitem{Stokes:divfree}
L.~Beir{\~a}o~da Veiga, C.~Lovadina, and G.~Vacca.
\newblock Divergence free virtual elements for the {S}tokes problem on
  polygonal meshes.
\newblock {\em ESAIM Math. Model. Numer. Anal.}, 51(2):509--535, 2017.

\bibitem{Berrone-VEM}
M.~F. Benedetto, S.~Berrone, S.~Pieraccini, and S.~Scial\`o.
\newblock The {V}irtual {E}lement {M}ethod for {D}iscrete {F}racture {N}etwork
  simulations.
\newblock {\em Comput. Meth. Appl. Mech. Engrg.}, 280:135--156, 2014.

\bibitem{dipietro}
L.~Botti and D.~Di~Pietro.
\newblock Assessment of {H}ybrid {H}igh-{O}rder methods on curved meshes and
  comparison with discontinuous {G}alerkin methods.
\newblock {\em J. Comput. Phys.}, 370:58--84, 2018.

\bibitem{brenner2003poincare}
S.~C. Brenner.
\newblock Poincar{\'e}--{F}riedrichs {I}nequalities for {P}iecewise ${H}^1$
  {F}unctions.
\newblock {\em SIAM J. Numer. Anal.}, 41(1):306--324, 2003.

\bibitem{brenner2017}
S.~C. Brenner, Q.~Guan, and L.~Sung.
\newblock Some {E}stimates for {V}irtual {E}lement {M}ethods.
\newblock {\em Comput. Methods Appl. Math.}, 17(4):553--574, 2017.

\bibitem{brennerscott}
S.~C. Brenner and L.~R. Scott.
\newblock {\em The mathematical theory of finite element methods}, volume~15 of
  {\em Texts in Applied Mathematics}.
\newblock Springer, New York, third edition, 2008.

\bibitem{Brezzi:2006}
F.~Brezzi, K.~Lipnikov, and M.~Shashkov.
\newblock Convergence of mimetic finite difference method for diffusion
  problems on polyhedral meshes with curved faces.
\newblock {\em Math. Models Methods Appl. Sci.}, 16(2):275--297, 2006.

\bibitem{hourglass}
A.~Cangiani, G.~Manzini, A.~Russo, and N.~Sukumar.
\newblock Hourglass stabilization and the virtual element method.
\newblock {\em Int. J. Numer. Meth. Engng.}, 102(3-4):404--436, 2015.

\bibitem{chen2017}
L.~Chen and J.~Huang.
\newblock Some {E}rror {A}nalysis on {V}irtual {E}lement {M}ethods.
\newblock {\em Calcolo}, 55(1):5, 2018.

\bibitem{chernov}
A.~Chernov and L.~Mascotto.
\newblock The {H}armonic {V}irtual {E}lement {M}ethod: {S}tabilization and
  {E}xponential {C}onvergence for the {L}aplace problem on polygonal domains.
\newblock {\em Preprint, ar{X}iv:1705.10049}, 2017.

\bibitem{Ciarlet:1972}
P.~G. Ciarlet and P.~A. Raviart.
\newblock Interpolation theory over curved elements, with applications to
  finite element methods.
\newblock {\em Comput. Methods Appl. Mech. Engrg.}, 1:217--249, 1972.

\bibitem{costabel}
M.~Costabel.
\newblock Boundary integral operators on {L}ipschitz domains: elementary
  results.
\newblock {\em SIAM J. Math. Anal.}, 19(3):613--626, 1988.

\bibitem{Hughes:2009}
J.~A. Cottrell, T.~J.~R. Hughes, and Y.~Bazilevs.
\newblock {\em Isogeometric analysis: toward integration of {CAD} and {FEA}}.
\newblock John Wiley \& Sons, 2009.

\bibitem{ding1996proof}
Z.~Ding.
\newblock A proof of the trace theorem of {S}obolev spaces on {L}ipschitz
  domains.
\newblock {\em Proceedings of the American Mathematical Society},
  124(2):591--600, 1996.

\bibitem{GTP14}
A.~L. Gain, C.~Talischi, and G.~H. Paulino.
\newblock On the virtual element method for three-dimensional linear elasticity
  problems on arbitrary polyhedral meshes.
\newblock {\em Comput. Methods Appl. Mech. Engrg.}, 282:132--160, 2014.

\bibitem{gardini}
F.~Gardini and G.~Vacca.
\newblock {V}irtual {E}lement {M}ethod for {S}econd {O}rder {E}lliptic
  {E}igenvalue {P}roblems.
\newblock {\em IMA J. of Numer. Anal.}, 2017.
\newblock doi: \texttt{https://doi.org/10.1093/imanum/drx063}.

\bibitem{Grisvard:1992}
P.~Grisvard.
\newblock Singularities in boundary value problems and exact controllability of
  hyperbolic systems.
\newblock In {\em Optimization, optimal control and partial differential
  equations ({I}a\c si, 1992)}, volume 107 of {\em Internat. Ser. Numer.
  Math.}, pages 77--84. Birkh\"auser, Basel, 1992.

\bibitem{Hughes:2005}
T.~J.~R. Hughes, J.~A. Cottrell, and Y.~Bazilevs.
\newblock Isogeometric analysis: {CAD}, finite elements, {NURBS}, exact
  geometry and mesh refinement.
\newblock {\em Comput. Methods Appl. Mech. Engrg.}, 194(39-41):4135--4195,
  2005.

\bibitem{Lenoir:1986}
M.~Lenoir.
\newblock Optimal isoparametric finite elements and error estimates for domains
  involving curved boundaries.
\newblock {\em SIAM J. Numer. Anal.}, 23(3):562--580, 1986.

\bibitem{lions-magenes}
J.~L. Lions and E.~Magenes.
\newblock {\em Probl\`emes aux limites non homog\`enes et applications. {V}ol.
  1}.
\newblock Travaux et Recherches Math\'ematiques, No. 17. Dunod, Paris, 1968.

\bibitem{Steklov-VEM}
D.~Mora, G.~Rivera, and R.~Rodr{\'{\i}}guez.
\newblock A virtual element method for the {S}teklov eigenvalue problem.
\newblock {\em Math. Models Methods Appl. Sci.}, 25(8):1421--1445, 2015.

\bibitem{Helmholtz-VEM}
I.~Perugia, P.~Pietra, and A.~Russo.
\newblock A plane wave virtual element method for the {H}elmholtz problem.
\newblock {\em ESAIM Math. Model. Numer. Anal.}, 50(3):783--808, 2016.

\bibitem{vianellocodici}
CAA: Padova-Verona research group~on ``Constructive~Approximation and
  Applications''.
\newblock {NUMERICAL SOFTWARE} produced by the {CAA} group.
\newblock {\em URL http://www.math.unipd.it/~marcov/CAAsoft.html}.

\bibitem{scott:1975}
R.~Scott.
\newblock Interpolated boundary conditions in the finite element method.
\newblock {\em SIAM J. Numer. Anal.}, 12:404--427, 1975.

\bibitem{Sevilla:2011}
R.~Sevilla, S.~Fern\'andez-M\'endez, and A.~Huerta.
\newblock Comparison of high-order curved finite elements.
\newblock {\em Internat. J. Numer. Methods Engrg.}, 87(8):719--734, 2011.

\bibitem{vianello1}
A.~Sommariva and M.~Vianello.
\newblock Product {G}auss cubature over polygons based on {G}reen's integration
  formula.
\newblock {\em BIT}, 47(2):441--453, 2007.

\bibitem{vianello2}
A.~Sommariva and M.~Vianello.
\newblock Gauss-{G}reen cubature and moment computation over arbitrary
  geometries.
\newblock {\em J. Comput. Appl. Math.}, 231(2):886--896, 2009.

\bibitem{vianello4}
A.~Sommariva and M.~Vianello.
\newblock Compression of multivariate discrete measures and applications.
\newblock {\em Numer. Funct. Anal. Optim.}, 36(9):1198--1223, 2015.

\bibitem{stein}
E.~M. Stein.
\newblock {\em Singular integrals and differentiability properties of
  functions}.
\newblock Princeton Mathematical Series, No. 30. Princeton University Press,
  Princeton, N.J., 1970.

\bibitem{vacca2016virtual}
G.~Vacca.
\newblock Virtual element methods for hyperbolic problems on polygonal meshes.
\newblock {\em Comput. Math. Appl.}, 74(5):882--898, 2017.

\bibitem{vacca2017}
G.~Vacca.
\newblock An ${H}^1$-conforming {V}irtual {E}lement for {D}arcy and {B}rinkman
  equations.
\newblock {\em Math. Models Methods Appl. Sci.}, 28(1):159--194, 2018.

\bibitem{wriggers}
P.~Wriggers, W.T. Rust, and B.D. Reddy.
\newblock A virtual element method for contact.
\newblock {\em Comput. Mech.}, 58(6):1039--1050, 2016.

\bibitem{Zlamal:1973}
M.~Zl\'amal.
\newblock Curved elements in the finite element method. {I}.
\newblock {\em SIAM J. Numer. Anal.}, 10:229--240, 1973.

\end{thebibliography}

\end{document}